\documentclass[10pt,reqno]{amsart}
\usepackage{amsmath}
\usepackage{amssymb}
\usepackage{amsthm}
\usepackage[usenames]{color}
\usepackage{eepic,epic}



\textheight 22.5  true cm
\textwidth 15 true cm
\voffset -1.0 true cm
\hoffset -1.0 true cm
\marginparwidth= 2 true cm


\newtheorem{thm}{Theorem}[section]

\newtheorem{lem}[thm]{Lemma}

\theoremstyle{definition}
\newtheorem{defn}[thm]{Definition}
\theoremstyle{remark}
\newtheorem{rem}[thm]{Remark}
\numberwithin{equation}{section}

\newcommand{\ep}{\epsilon}




\begin{document}
\title[]
{The Lane-Emden system near the critical hyperbola on nonconvex domains}
\author{Woocheol Choi}
\subjclass[2010]{Primary 35J60}
\keywords{The Lane-Emden system, Critical hyperbola, Blow-up analysis, Nonconvex domain}
\address{School of Mathematics, Korea Institute for Advanced Study, Seoul 130-722, Korea}
\email{wchoi@kias.re.kr}

\maketitle
\begin{abstract}
In this paper we study the asymptotic behavior of minimal energy solutions to the Lane-Emden system $-\Delta u = v^p$ and $-\Delta v = u^q$ on bounded domains as the index $(p,q)$ approaches to the critical hyperbola from below. Precisely, we remove the convexity assumption on the domain in the result of Guerra \cite{G}. The main task is to get the uniform boundedness of the solutions near the boundary because it is difficulty to adapt the moving plane method for the system on nonconvex domains if $\max \{p,q\} > \frac{n+2}{n-2}$. For the purpose, we shall derive a contradiction by exploiting carefully the Pohozaev type identity if the maximum point approaches to the boundary. 
\end{abstract}
\section{introduction}

In this paper we consider the Lane-Emden system
\begin{equation}\label{eq-main}
\left\{ \begin{array}{ll} -\Delta u = v^{p}&\textrm{in}~\Omega,
\\
-\Delta v = u^q&\textrm{in}~\Omega,
\\
u>0,~v>0&\textrm{in}~\Omega,
\\
u=v=0&\textrm{on}~\partial \Omega,
\end{array}
\right.
\end{equation}
where $\Omega \subset \mathbb{R}^n\,(n \geq 3)$ is a smooth bounded domain and $(p,q) \in (0,\infty)^2$. The nonlinear system \eqref{eq-main} is a fundamental form among strongly coupled nonlinear systems and it has received a lot of interest from many authors. 
\

The existence theory of \eqref{eq-main} is related to the critical hyperbola given by the graph of $(p,q) \in (0,\infty)^2$ of the form
\begin{equation}\label{eq-cr-hy}
\frac{1}{p+1} + \frac{1}{q+1} = \frac{n-2}{n}.
\end{equation}
This notion was introduced by Cl\'ement et al. \cite{CFM} and van der Vorst \cite{V}. Under the condition that $(p,q)$ satisfies $pq >1$ and
\begin{equation}\label{pq-critic-1}
\frac{1}{p+1} + \frac{1}{q+1} > \frac{n-2}{n},
\end{equation}
Hulshof-Vorst \cite{HV} and Figueiredo-Felmer \cite{FF} showed the existence of a nontrivial solution to \eqref{eq-main} by applying a min-max method of Benci-Rabinowitz \cite{BR} with a further assumption $\min\{p, q\}>1$. Recently, the latter condition was relaxed by Bonheure-Moreira-Ramos \cite{BMR} to $pq \neq 1$. On the other hand, Mitidieri \cite{M1} obtained a Pohozaev type identity for \eqref{eq-main} which yields that if the domain is star-shaped and $(p,q)$ satisfies
\begin{equation*}
\frac{1}{p+1} + \frac{1}{q+1} \leq \frac{n-2}{n},
\end{equation*}
then there exists no nontrivial solution to \eqref{eq-main}.
\

Given the existence result of solutions, a fundamental problem is to find the explicit shape of solutions. An answer for this issue can be provided the moving plane method. It enables us to yield that if the domain is symmetric with respect to a direction, then so are the solutions (refer to \cite{QS}). On the other hand, it is expected that the solutions of \eqref{eq-main} may become singular as the index $(p,q)$ approaches to the critical hyperbola. Actually, Guerra \cite{G} showed that a sequence of least energy solutions to \eqref{eq-main} blows up as $(p,q)$ approaches to the critical hyperbola from below, and studied their asymptotic behavior. The result was obtained under an additional assumption that the domain is convex and $\min\{p,q\} \geq 1$. Up to our knowledge, this result was the only contribution on this issue for the system \eqref{eq-main}. 
\

The aim of our paper is to remove the convexity assumption in the  result of Guerra \cite{G}. Before explaining the convexity issue, it is worthwhile to note that when $p=q$ and $u=v$, the problem \eqref{eq-main} is reduced to the Lane-Emden-Fowler equation
\begin{equation}\label{eq-LEF}
\left\{\begin{array}{ll} -\Delta u_{\epsilon} = u_{\epsilon}^{\frac{n+2}{n-2}-\epsilon}&\quad \textrm{in}~\Omega,
\\
u_{\epsilon}=0&\quad \textrm{on}~\partial \Omega.
\end{array}
\right.
\end{equation}
As for this problem, the asymptotic behavior as $\ep \searrow 0$ has been studied very well through a series of papers. First, Han \cite{H} and Rey \cite{R} studied asymptotic behavior of the least energy solutions, and this result was extended to finite energy solutions in Bahri-Li-Rey \cite{BLR} and Rey \cite{R3} ($N \ge 4$ and $N = 3$, respectively). In addition, applying the Lyapunov-Schmidt reduction method, Rey \cite{R} constructed one-peak solutions to \eqref{eq-LEF}. Also multi-peak solutions were constructed by Bahri-Li-Rey \cite{BLR}, Rey \cite{R3} and Musso-Pistoia \cite{MP} (for $N \geq 3$). We remark that many crucial  techniques used for studying \eqref{eq-LEF} are difficult to be generalized for the system \eqref{eq-main}. 
\

Now we turn to the convexity issue. The convexity is needed both for the problems \eqref{eq-main} and \eqref{eq-LEF} if one applies the moving plane method to show the uniform boundedness of the solution $u_{\ep}$ with respect to $\ep >0$ on a neighborhood of the boundary $\partial \Omega$. This yields that the blow up point converges to an interior point, and then a further analysis using the Green's expression and the Pohozaev type identity can be conducted to get further informations of the asymptotic behavior. However, if the domain is not convex, it is difficult to apply the moving plane method in a direct way. When it comes to the single problem \eqref{eq-LEF}, Han \cite{H} overcame this difficulty by applying the Kelvin transform to \eqref{eq-LEF} on balls which touch the domain $\Omega$ by the boundary $\partial \Omega$. Unfortunately, such an idea does not work for the system \eqref{eq-main} if one of $p$ and $q$ is larger than $\frac{n+2}{n-2}$ (see page \,73 and Section 31.1 in \cite{QS}). This kind of difficulty was also observed previously in \cite{FLN} where the authors obtained the Gidas-Spruck type a priori estimate for \eqref{eq-main} with the convexity assumption.
\

The contribution of this paper is to prove the uniform boundedness near the boundary without using the moving plane method. For this aim, we shall make use of the Pohozaev type identity and the boundary behavior of Green's function of the Dirichlet Laplacian $-\Delta$ on the domain $\Omega$. Our approach may work for any smooth bounded domains without the convexity assumption.
\

To begin with, we fix a value $p \in [1,\infty)$ and find $q_{\epsilon}>1$ such that
\begin{equation}\label{eq-pq-e}
\frac{1}{p+1} + \frac{1}{q_{\epsilon}+1} = \frac{n-2}{n} + \epsilon\quad \textrm{for}~\epsilon >0.
\end{equation}
Then $(p,q_{\epsilon})$ is subcritical and approaches to the critical hyperbola as $\epsilon \searrow 0$. By the symmetry of \eqref{eq-main}, we may assume that $p \leq q_{\ep}$ without loss of generality. We then have $p \in \left[1, \frac{n+2}{n-2}\right]$. 
\begin{defn}\label{def-ME}
We consider a sequence of solutions $\{ (u_{\epsilon}, v_{\epsilon})\}_{\epsilon >0}$ such that each $(u_{\epsilon}, v_{\epsilon})$ is a solutions to \eqref{eq-main} with $q=q_{\epsilon}$. Then we say that $\{(u_{\epsilon}, v_{\epsilon})\}_{\ep >0}$ is of type $(ME)$ if the following condition holds;
\begin{equation}\label{eq-energy}
S_{\epsilon}(\Omega) = \frac{ \int_{\Omega} |\Delta u_{\epsilon}|^{\frac{p+1}{p}} dx}{ \left\|u_{\epsilon}\right\|_{L^{q_{\epsilon}+1}(\Omega)}^{\frac{p+1}{p}}} = S + o(1)\quad \textrm{as}\quad \epsilon \searrow 0,
\end{equation}
where $S$ is the best constant of the Sobolev embedding 
\begin{equation}\label{eq-hl}
\| u\|_{L^{q+1}(\mathbb{R}^n)} \leq S^{-\frac{p}{p+1}} \| \Delta u\|_{L^{\frac{p+1}{p}}(\mathbb{R}^n)}.
\end{equation}
\end{defn}
 We recall the result of Guerra \cite{G}. 
\begin{thm}[Guerra \cite{G}]\label{gue}Assume that $\Omega$ is a convex bounded domain and that $p \in \left[1, \frac{n+2}{n-2}\right]$. Let $\{ (u_{\ep}, v_{\ep}) \}_{\epsilon>0}$ is a sequence of solutions to \eqref{eq-main} of type $(ME)$.
 Then $(u_{\epsilon}, v_{\epsilon})$ blows up at a point $x_0 \in \Omega$ as $\ep$ goes to zero, up to a subsequence if necessary. In addition, the following holds;
\begin{enumerate}
\item The point $x_0$ is a critical point of the Robin function $H(x,x)$ (see \eqref{eq-g-decom}) when $p \in \left[\frac{n}{n-2}, \frac{n+2}{n-2}\right]$ and of the function $\widetilde{H}(\cdot,x_0)$ defined in \eqref{eq-tilh} when $p \in \left[1, \frac{n}{n-2}\right)$.
\item We have
\begin{equation*}
\left\{ \begin{aligned}
\lim_{\ep \rightarrow 0^{+}} \ep \|u_{\ep}\|_{L^{\infty}(\Omega)}^{\frac{N}{p(N-2)-2}+1} &= S^{\frac{1-pq}{p(q+1)}} \|U\|_{L^q (\mathbb{R}^n)}^q \|V\|_{L^p (\mathbb{R}^n)}^p  |H(x_0, x_0)|&\quad\textrm{if}~p > \frac{n}{n-2},
\\
\lim_{\ep \rightarrow 0^{+}} \ep \frac{\|u_{\ep}\|_{L^{\infty}(\Omega)}^{\frac{n}{n-2}+1}}{\log \|u_{\ep}\|_{L^{\infty}(\Omega)}} &= \frac{p+1}{n-2} a^{\frac{n}{n-2}}  S^{\frac{1-pq}{p(q+1)}} \|U\|_{L^q (\mathbb{R}^n)}^q |H(x_0, x_0)|&\quad\textrm{if}~p = \frac{n}{n-2},
\\
\lim_{\ep \rightarrow 0^{+}} \ep \|u_{\ep}\|_{L^{\infty}(\Omega)}^{p+1} &= S^{\frac{1-pq}{p(q+1)}} \|U\|_{L^q (\mathbb{R}^n)}^{q (p+1)} |\widetilde{H}(x_0, x_0)|&\quad\textrm{if}~p < \frac{n}{n-2}.
\end{aligned}
\right.
\end{equation*}
\end{enumerate} 
\end{thm}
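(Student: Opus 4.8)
The plan is to run the standard blow-up analysis for subcritical elliptic problems, adapted to the asymmetric scaling forced by $p\neq q_\ep$, and to read off the fine asymptotics from Mitidieri's Pohozaev-type identity together with the boundary decomposition of the Green's function of $-\Delta$ on $\Omega$.

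First I would prove that the solutions blow up. Since $\{(u_\ep,v_\ep)\}$ is of type $(ME)$, the quotient $S_\ep(\Omega)$ converges to the best constant $S$ in \eqref{eq-hl}, which is not attained on a bounded domain; hence $u_\ep$ and $v_\ep$ cannot remain uniformly bounded, for otherwise elliptic regularity would produce, along a subsequence, a nontrivial solution of the limiting critical system on $\Omega$ realizing $S$ --- impossible. Thus $M_\ep:=\|u_\ep\|_{L^\infty(\Omega)}=u_\ep(x_\ep)\to\infty$ for suitable $x_\ep\in\Omega$, and likewise $\|v_\ep\|_{L^\infty(\Omega)}\to\infty$. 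Next I would rescale: picking $\mu_\ep\to 0$ so that the two equations balance under the dilation $x\mapsto x_\ep+\mu_\ep y$ --- which, owing to the unequal homogeneities, amounts to rescaling $u_\ep$ by $M_\ep^{-1}$ and $v_\ep$ by a matching power of $M_\ep$, the relevant scaling exponents being $\tfrac{2(p+1)}{pq-1}$ and $\tfrac{2(q+1)}{pq-1}$ --- and setting $\widehat{u}_\ep(y)=M_\ep^{-1}u_\ep(x_\ep+\mu_\ep y)$ and $\widehat{v}_\ep$ analogously, the subcriticality of $(p,q_\ep)$ together with elliptic estimates gives $(\widehat{u}_\ep,\widehat{v}_\ep)\to(U,V)$ in $C^2_{\mathrm{loc}}(\mathbb{R}^n)$, the unique positive radial ground state of $-\Delta U=V^p$, $-\Delta V=U^q$ with $U(0)=1$. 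Here the identity $S_\ep(\Omega)\to S$ is what guarantees that exactly one bubble forms and that no energy is lost.

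The step where convexity enters --- and, I expect, the main obstacle --- is to show that the concentration point stays in the interior. On a convex domain this follows from the moving plane method for cooperative systems in the spirit of de Figueiredo--Lions--Nussbaum (cf.\ \cite{FLN}): it provides a uniform bound for $u_\ep$ and $v_\ep$ on a fixed neighborhood of $\partial\Omega$, whence $\dist(x_\ep,\partial\Omega)$ is bounded away from zero and, up to a subsequence, $x_\ep\to x_0\in\Omega$. It is exactly this convexity hypothesis that the present paper removes, replacing the moving plane argument by a Pohozaev-type argument.

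Finally I would derive (1) and (2) by coupling the Green's representation with the Pohozaev identity. Writing $u_\ep(x)=\int_\Omega G(x,y)\,v_\ep(y)^p\,dy$ and splitting $G(x,y)=\tfrac{c_n}{|x-y|^{n-2}}-H(x,y)$ as in \eqref{eq-g-decom}, one sees that near $x_0$ the density $v_\ep^p$ concentrates like a rescaled copy of $V^p$, and the size of the contribution of the regular part $H$ depends on whether $V^p\in L^1(\mathbb{R}^n)$: since $V$ decays like $|y|^{-(n-2)}$ at infinity, $V^p$ is integrable precisely when $p>\tfrac{n}{n-2}$, with the borderline at $p=\tfrac{n}{n-2}$. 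This dichotomy is the source of the three regimes --- the Robin function $H(x_0,x_0)$ when $p>\tfrac{n}{n-2}$; the modified function $\widetilde{H}(\cdot,x_0)$ of \eqref{eq-tilh}, obtained by iterating $G$ once more, when $p<\tfrac{n}{n-2}$; and a logarithmic correction carrying the constant $a$ at $p=\tfrac{n}{n-2}$. I would then apply Mitidieri's Pohozaev-type identity on a small ball $B_\rho(x_\ep)$ (or on all of $\Omega$), substitute the bubble profile and the Green's expansion, and match the two leading orders in $\ep$ and $M_\ep$: the cancellation of the first-order term forces $x_0$ to be a critical point of $H(x,x)$, respectively of $\widetilde{H}(\cdot,x_0)$, which is assertion (1), while the second-order balance yields the limits of assertion (2) with the constants $\|U\|_{L^q(\mathbb{R}^n)}^q$, $\|V\|_{L^p(\mathbb{R}^n)}^p$, $S^{\frac{1-pq}{p(q+1)}}$ (and $a$ at the borderline).
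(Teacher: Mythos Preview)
Your outline is a faithful sketch of Guerra's argument, but note that the paper does \emph{not} itself prove Theorem~\ref{gue}: it is quoted verbatim from \cite{G} (``We recall the result of Guerra \cite{G}''), and the paper's own contribution is to remove the convexity hypothesis in the separate Theorems~\ref{thm-1} and~\ref{thm-2}. So there is no proof in the paper to compare against.

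That said, your sketch matches what the paper indicates about \cite{G}: blow-up is forced by the minimal-energy condition (cf.\ Lemma~\ref{lem-lam}, which the present paper proves without convexity by a Green's function comparison rather than a star-shapedness/Pohozaev argument); the rescaled limit is the entire ground state $(U,V)$ of \eqref{eq-entire}; the moving-plane method on convex $\Omega$ keeps $x_\ep$ away from $\partial\Omega$; and the trichotomy in $p$ versus $\tfrac{n}{n-2}$ comes precisely from the (non-)integrability of $V^p$ via the decay \eqref{eq-decay}, with $\widetilde H$ entering when $p<\tfrac{n}{n-2}$ through the iterated Green's operator \eqref{eq-GG}. You also correctly anticipate that the present paper replaces the moving-plane step by a Pohozaev-type contradiction argument near the boundary.
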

\begin{rem}
The assumption $p \geq 1$ is due to a technical reason. Especially, the proof of Lemma 2.3  in \cite{G} for the global upper bound of the rescaled solutions works for $p \geq 1$ (see Lemma \ref{lem-gu} below). Removing this assumption be also an interesting problem.
\end{rem}
Now, under the assumption that $\Omega$ is any smooth bounded domain, we state our first main result.
\begin{thm}\label{thm-1} Let $p \in \left[ \frac{n}{n-2}, \frac{n+2}{n-2}\right]$. Consider a sequence of solutions $\{( u_{\epsilon}, v_{\epsilon})\}_{\epsilon >0}$ to \eqref{eq-main} of type $(ME)$. Then $(u_{\epsilon}, v_{\epsilon})$ are uniformly bounded near the boundary and blows up in an interior point of $\Omega.$
\end{thm}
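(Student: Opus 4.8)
\emph{Plan of the proof.} The plan is to argue by contradiction through Mitidieri's Pohozaev identity, converting the boundary behaviour of the Dirichlet Green's function $G$ of $-\Delta$ on $\Omega$ into the boundary term of that identity. Write $M_\ep=\|u_\ep\|_{L^\infty(\Omega)}=u_\ep(x_\ep)$, $N_\ep=\|v_\ep\|_{L^\infty(\Omega)}$, $\alpha_\ep=\tfrac{2(p+1)}{pq_\ep-1}$, $\beta_\ep=\tfrac{2(q_\ep+1)}{pq_\ep-1}$, and let $\mu_\ep>0$ be the natural blow-up scale, so that $M_\ep=\mu_\ep^{-\alpha_\ep}$, $N_\ep=\mu_\ep^{-\beta_\ep}$ and the rescalings $\hat u_\ep(z)=M_\ep^{-1}u_\ep(x_\ep+\mu_\ep z)$, $\hat v_\ep(z)=N_\ep^{-1}v_\ep(x_\ep+\mu_\ep z)$ converge in $C^2_{loc}$ to the ground state $(U,V)$ of the limiting critical system. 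The first step is to record the blow-up picture of a type $(ME)$ sequence --- which does not use convexity ---: $\mu_\ep\to0$; both $u_\ep$ and $v_\ep$ are uniformly bounded on compact subsets of $\overline\Omega$ away from $\lim x_\ep$; and, by Lemma \ref{lem-gu}, the rescalings satisfy a uniform bubble-type decay bound. Since $1=\hat u_\ep(0)$, a boundary gradient estimate for $\hat u_\ep$ excludes $d_\ep/\mu_\ep\to0$, where $d_\ep:=\dist(x_\ep,\pa\Omega)$, so $d_\ep/\mu_\ep$ is bounded below. It suffices to prove $\liminf_\ep d_\ep>0$, for then $u_\ep,v_\ep$ are uniformly bounded on a fixed neighbourhood of $\pa\Omega$ and the concentration point is interior; so I assume, for contradiction, that $d_\ep\to0$ along a subsequence.

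Next I split according to the size of $d_\ep/\mu_\ep$. If $d_\ep/\mu_\ep$ stays bounded, then $\mu_\ep^{-1}(\Omega-x_\ep)$ converges to a half-space and $(\hat u_\ep,\hat v_\ep)$ converges in $C^1_{loc}$ up to the boundary to a positive solution $(\hat U,\hat V)$ of the limiting critical system on that half-space, vanishing on its boundary, with $\hat U(0)=1$ and with the decay inherited from Lemma \ref{lem-gu}; since $p\ge\tfrac{n}{n-2}>1$ and $q_\ep>1$, the coefficients of the linearised cooperative system decay at infinity, so the moving plane method for planes parallel to the flat boundary both starts and, by the usual symmetry argument, cannot stop, which forces $\hat U$ to be nondecreasing in the normal direction and hence, together with $\hat U\to0$ at infinity, $\hat U\le0$ --- absurd. (Here one uses only the half-space geometry, where the moving plane method is available no matter how large $q_\ep$ is; this is exactly what fails on a general nonconvex $\Omega$.) So I may assume $d_\ep\to0$ with $d_\ep/\mu_\ep\to\infty$.

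In that regime the bubble does not touch $\pa\Omega$, and I would run the refined blow-up expansion through the Green's representation $u_\ep=\int_\Omega G(\cdot,z)v_\ep(z)^p\,dz$ (and its analogue for $v_\ep$): splitting the integral into the bubble core $B_{R\mu_\ep}(x_\ep)$, on which $G(\cdot,z)\simeq G(\cdot,x_\ep)$ because $\mu_\ep\ll d_\ep$, and its complement, which is controlled by the decay bound of Lemma \ref{lem-gu}, one obtains, with $\gamma_\ep^{(u)}:=\int_\Omega v_\ep^p>0$ and $\gamma_\ep^{(v)}:=\int_\Omega u_\ep^{q_\ep}>0$,
\[
u_\ep=\gamma_\ep^{(u)}G(\cdot,x_\ep)(1+o(1)),\qquad v_\ep=\gamma_\ep^{(v)}G(\cdot,x_\ep)(1+o(1))\quad\text{on }\Omega\setminus B_{R\mu_\ep}(x_\ep),
\]
hence $\pa_\nu u_\ep(y)=\gamma_\ep^{(u)}\pa_\nu G(y,x_\ep)(1+o(1))$ and $\pa_\nu v_\ep(y)=\gamma_\ep^{(v)}\pa_\nu G(y,x_\ep)(1+o(1))$ uniformly on $\pa\Omega$. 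Then I multiply $-\Delta u_\ep=v_\ep^{p}$ by $(x-x_0)\cdot\nabla v_\ep+b\,v_\ep$ and $-\Delta v_\ep=u_\ep^{q_\ep}$ by $(x-x_0)\cdot\nabla u_\ep+a\,u_\ep$, integrate over $\Omega$ and add, with $a+b=n-2$ and $a,b$ chosen via \eqref{eq-pq-e} so that $b-\tfrac{n}{p+1}=a-\tfrac{n}{q_\ep+1}=-\tfrac{n\ep}{2}$ (the plain dilation $x\cdot\nabla$ would leave $O(1)$ bulk coefficients of opposite sign when $p$ is near $\tfrac{n}{n-2}$). Every boundary term collapses because $u_\ep=v_\ep=0$ on $\pa\Omega$, and --- using $\int_\Omega v_\ep^{p+1}=\int_\Omega u_\ep^{q_\ep+1}=:E_\ep$ with $S_\ep(\Omega)=E_\ep^{(pq_\ep-1)/(p(q_\ep+1))}$, so that $E_\ep\to E_0\in(0,\infty)$ by type $(ME)$ --- the identity becomes
\[
\int_{\pa\Omega}\big((y-x_0)\cdot\nu(y)\big)\,\pa_\nu u_\ep(y)\,\pa_\nu v_\ep(y)\,d\sigma(y)=n\ep\,E_\ep>0\qquad(x_0\in\mr^n\ \text{arbitrary}).
\]

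The contradiction is then a matter of signs. Let $\bar x_\ep\in\pa\Omega$ satisfy $|x_\ep-\bar x_\ep|=d_\ep$ and take $x_0=\bar x_\ep+\nu(\bar x_\ep)$, so $(y-x_0)\cdot\nu(y)\le-\tfrac12$ on $\pa\Omega\cap B_\delta(\bar x_\ep)$ for a fixed small $\delta$. By Hopf, $\pa_\nu u_\ep\,\pa_\nu v_\ep>0$ on $\pa\Omega$; and by the two-sided Poisson-kernel estimate $|\pa_\nu G(y,x_\ep)|\simeq d_\ep/|y-x_\ep|^n$, valid for $y$ near $\bar x_\ep$ where $\Omega$ is locally convex-like, the product satisfies $\int_{\pa\Omega\cap B_\delta(\bar x_\ep)}\pa_\nu u_\ep\,\pa_\nu v_\ep\,d\sigma\simeq\gamma_\ep^{(u)}\gamma_\ep^{(v)}d_\ep^{\,1-n}$, while $\int_{\pa\Omega\setminus B_\delta(\bar x_\ep)}\pa_\nu u_\ep\,\pa_\nu v_\ep\,d\sigma\lesssim\gamma_\ep^{(u)}\gamma_\ep^{(v)}d_\ep^{\,2}$ (there $|\pa_\nu G(y,x_\ep)|\le Cd_\ep/\delta^n$). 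As $d_\ep^{\,2}=o(d_\ep^{\,1-n})$, the boundary integral concentrates near $\bar x_\ep$, where the weight is negative, so the left-hand side above is $\le-c\,\gamma_\ep^{(u)}\gamma_\ep^{(v)}d_\ep^{\,1-n}<0$ for small $\ep$, contradicting $n\ep E_\ep>0$. This yields $\liminf_\ep d_\ep>0$ and the theorem. The main obstacle --- the point where the nonconvexity of $\Omega$ is really faced, and which in \cite{G} (and in \cite{H} for \eqref{eq-LEF}) was handled by the moving plane method --- is the uniform validity up to $\pa\Omega$ of the refined expansion $u_\ep=\gamma_\ep^{(u)}G(\cdot,x_\ep)(1+o(1))$ and of the two-sided behaviour of $\pa_\nu G(\cdot,x_\ep)$, i.e.\ estimating $\int_{\Omega\setminus B_{R\mu_\ep}(x_\ep)}G(\cdot,z)v_\ep(z)^p\,dz$ against $\gamma_\ep^{(u)}G(\cdot,x_\ep)$ uniformly as $x_\ep\to\pa\Omega$ via Lemma \ref{lem-gu} on a possibly nonconvex domain. (At the endpoint $p=\tfrac{n}{n-2}$ the quantities $\gamma_\ep^{(u)}$ and $E_\ep$ pick up factors $\log(1/\mu_\ep)$ that must be carried along but do not affect the sign argument.)
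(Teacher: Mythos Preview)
Your plan differs from the paper in an essential way. You apply the global Pohozaev identity on $\Omega$, with the boundary term living on $\partial\Omega$; the paper instead applies a \emph{local} (translational) Pohozaev identity on the interior sphere $\partial B(x_\ep, 2d_\ep)$, where $d_\ep=\tfrac14\dist(x_\ep,\partial\Omega)$. This choice is the crux of the paper's argument and is precisely what dissolves the ``main obstacle'' you flag at the end. On $\partial B(x_\ep, 2d_\ep)$ one has $|x-x_\ep|=2d_\ep$ and $\dist(x,\partial\Omega)\ge 2d_\ep$, so the Green expansions $v_\ep=A_U\lambda_\ep^{-n/(q_\ep+1)}G(\cdot,x_\ep)+o(\cdot)$ and the analogous one for $u_\ep$ (with a $\log N_\ep$ factor when $p=\tfrac{n}{n-2}$) are proved by an error analysis carried out at the single scale $d_\ep$, entirely in the interior (Lemmas~\ref{lem-g10}, \ref{lem-g1}, \ref{lem-g2}). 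After inserting these into the local identity, a computation using $-\Delta_x G(\cdot,x_\ep)=0$ on annuli shows the left side is a constant multiple of $\partial_{n_{x_\ep}}H(x_\ep,x_\ep)$; the boundary enters only here, through Rey's estimate $n_x\cdot\nabla_1 H(x,x)\gtrsim\mathbf d(x)^{-(n-1)}$, which gives the lower bound. The right side is bounded above via the decay of $U,V$ on the sphere, and comparing powers of $N_\ep=\lambda_\ep d_\ep$ yields the contradiction $p\le\tfrac{n}{n-2}$ (respectively a $\log$ contradiction at the endpoint). Thus the paper never needs the uniform-up-to-$\partial\Omega$ expansion of $\partial_\nu u_\ep,\partial_\nu v_\ep$ that your route requires; that expansion is genuinely harder (the main term $\gamma_\ep^{(u)}\partial_\nu G(y,x_\ep)$ has varying size along $\partial\Omega$, so the relative error must be controlled against a degenerating quantity), and you have not supplied it.

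A second difference: your elimination of the case $d_\ep/\mu_\ep$ bounded via moving planes on the half-space is plausible but glossed over (starting and continuation in the unbounded strip need the decay from Lemma~\ref{lem-gu} and a narrow-domain maximum principle for the cooperative system). The paper avoids any half-space Liouville theorem: Lemma~\ref{lem-bb} observes that if the rescaled limit $(\overline U,\overline V)$ lived on a half-space $\mathcal H$, then by the $(ME)$ condition it would extremize the Sobolev-type inequality on $\mathcal H$ with the whole-space constant $S$, which is impossible by the strict inequality $K_{(-\Delta_{\mathcal H})^{-1}}<K_{(-\Delta)^{-1}}$ of Green's kernels. Your final sign argument encodes the same phenomenon as the paper's (both ultimately exploit $n_x\cdot\nabla_1 H(x,x)\gtrsim\mathbf d(x)^{-(n-1)}$, in your case through concentration of the Poisson kernel near $\bar x_\ep$), so your plan is not wrong in spirit; it is a harder implementation of the same idea, with the hardest step left as an acknowledged obstacle rather than resolved.
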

As a first step for Theorem \ref{thm-1}, we prove in Lemma \ref{lem-lam} that the sequence of solutions blows up using the strict inequality of Green's functions (see \eqref{eq-ineq-gr}). Then, we are left to show that the blow up point $x_{\ep}$ (see \eqref{eq-max}) are uniformly away from the boundary. This is the main part of the paper. To show it, we shall argue by the contradiction. Suppose that the blow up point approaches to the boundary $\partial \Omega$ as $\epsilon \searrow 0$. Under this assumption, our strategy is to derive a contradiction from the following Pohozaev type identity (see Lemma \ref{lem-poho}) on an annulus centered at the blow up point;
\begin{equation}\label{eq-hn-15}
\begin{split}
&-\int_{\partial B(x_{\ep}, 2d_{\ep})} \left( \frac{\partial u_{\ep}}{\partial \nu} \frac{\partial v_{\ep}}{\partial x_j} + \frac{\partial v_{\ep}}{\partial \nu} \frac{\partial u_{\ep}}{\partial x_j} \right) d S_x + \int_{\partial B(x_{\ep}, 2d_{\ep})}(\nabla u_{\ep} \cdot \nabla v_{\ep}) \nu_j dS_x
\\
&\qquad\qquad= \frac{1}{p+1} \int_{\partial B(x_{\ep}, 2d_{\ep})} v_{\ep}^{p+1} \nu_j dS_x + \frac{1}{q_{\ep}+1} \int_{\partial B(x_{\ep}, 2d_{\ep})} u_{\ep}^{q_{\ep}+1} \nu_j dS_x,
\end{split}
\end{equation}
where $x_{\ep} \in \Omega$ is the maximum point of $(u_{\ep}, v_{\ep})$ defined in \eqref{eq-max} and $d_{\ep} = \textrm{dist}(x_{\ep}, \partial \Omega_{\ep})/4$. 
\

The left hand side will be estimated as a derivative of the function $H$ multiplied by some value depending on $\epsilon$, where $H$ is the regular part of Green's function on domain $\Omega$ (see \eqref{eq-g-decom}). Then, applying the estimate \eqref{eq-hn} for $H$;
\begin{equation}\label{eq-hn-16}
n_x \cdot \nabla_1 H(x,x) \geq C \mathbf{d}(x)^{-(n-1)}\quad \textrm{for}\quad x \in \Omega ~\textrm{with}~ \mathbf{d}(x) \leq c,
\end{equation}
we can get a lower bound of the left hand side. In order to estimate the left hand side of \eqref{eq-hn-15}, we need to represent the solution $(u_{\ep}, v_{\ep})$ on $\partial B(x_{\ep}, 2d_{\ep})$ in terms of Green's function with a relatively small error. It requires a careful analysis as $G(x,x_{\ep})$ for $x\in \partial B(x_{\ep}, 2d_{\ep})$ goes to infinity as $\ep$ goes to zero since $\lim_{\ep \rightarrow 0} d_{\ep} = 0$. We shall obtain  the desired estimates in Lemma \ref{lem-g1} and Lemma \ref{lem-g2}.  
\

On the other hand, we shall find a sharp upper bound of the right hand side using the decaying property of solutions near the blow up point. Occasionally, these two lower and upper bounds will lead to a contradiction. Due to a technical reason, we will handle the cases $p > \frac{n}{n-2}$ and $p = \frac{n}{n-2}$ separately in Section 5 and \mbox{Section 6.}
\

It requires more work to handle the case $p < \frac{n}{n-2}$ due to some technical difficulty. In particular, we need to handle the function $\widetilde{G}:\Omega \times \Omega \rightarrow \mathbb{R}$ defined by \begin{equation}\label{eq-GG}
\left\{ \begin{array}{ll} -\Delta_x \widetilde{G}(x,y) = G^p (x,y)& x \in\Omega,
\\
\widetilde{G}(x,y) = 0& x\in\partial \Omega.
\end{array}
\right.
\end{equation}
The $C^1$-regular part $\widetilde{H}$ of function $\widetilde{G}$ will be defined in \eqref{eq-h1-2} and it will play a similar role as the regular part $H$ of Green's function $G$. For our aim, we will derive the following estimate of $\widetilde{H}$ near the boundary of the domain:
\begin{thm}\label{thm-3} Assume that $n \geq 5$. There exists a constant $\alpha_n >0$ such that for $p \in [1,1+\alpha_n )$, the following statement is true;
\

\noindent \textbf{(A1)} There exist constants $C>0$ and $\delta >0$ such that, for $x \in \Omega$ with $\mathbf{d}(x):= \textrm{dist}(x,\partial \Omega) < \delta$ satisfies the inequality
\begin{equation}\label{eq-h2}
\partial_{n_{x}} \widetilde{H}(x,x) \geq C \mathbf{d}(x)^{1-(n-2)p}.
\end{equation} 
\end{thm}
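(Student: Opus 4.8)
The plan is to reduce the estimate \eqref{eq-h2} for $\widetilde{H}$ to the corresponding known estimate \eqref{eq-hn-16} for $H$, by a careful comparison of the defining equations \eqref{eq-GG} and \eqref{eq-g-decom}. First I would recall the decomposition $G(x,y) = \Gamma(x-y) - H(x,y)$ with $\Gamma$ the fundamental solution of $-\Delta$ in $\mr^n$, so that $G(\cdot,y)^p = (\Gamma(\cdot-y) - H(\cdot,y))^p$. Since $\Gamma(x-y) \sim c_n |x-y|^{-(n-2)}$, the leading singularity of $G^p(\cdot,y)$ near the diagonal behaves like $|x-y|^{-(n-2)p}$, and this is locally integrable precisely when $(n-2)p < n$, i.e. $p < \tfrac{n}{n-2}$, which is guaranteed by taking $\alpha_n$ small (and $n\ge 5$ ensures $\tfrac{n}{n-2} > 1$ with room to spare). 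Thus $\widetilde{G}(\cdot,y)$ is well-defined and the model singular profile should be obtained by solving $-\Delta_x W(x-y) = \Gamma(x-y)^p$ on $\mr^n$, which by scaling gives $W(z) \sim c |z|^{2-(n-2)p}$ (note $2-(n-2)p < 0$ in our range). The regular part is then $\widetilde{H}(x,y) := W(x-y) - \widetilde{G}(x,y)$, or more precisely one subtracts off enough of the singular expansion of $\widetilde G$ to leave a $C^1$ remainder; this is the content of the definition \eqref{eq-h1-2} referenced in the statement.

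Next I would set up the boundary-derivative computation. Writing $\widetilde{H}(x,y) = W(x-y) - \widetilde{G}(x,y)$ and differentiating along the inner normal $n_x$ at $x=y$, the term $\partial_{n_x} W(x-y)\big|_{x=y}$ vanishes by the structure of $W$ (it is radial, evaluated at $0$), so the whole contribution comes from $-\partial_{n_x}\widetilde{G}(x,x)$ together with whatever lower-order correction terms enter \eqref{eq-h1-2}. The heuristic is that near a boundary point, by the method of images the dominant behavior of $G(x,y)$ for $x,y$ close to $\partial\Omega$ is $\Gamma(x-y) - \Gamma(x-y^*)$ where $y^*$ is the reflection of $y$ across the (approximately flat) boundary; hence $H(x,y) \approx \Gamma(x-y^*)$, and $\partial_{n_x}H(x,x) \approx \partial_{n_x}\Gamma(x-x^*)\big|_{x^* = \text{reflection}} \sim \mathbf{d}(x)^{-(n-1)}$, recovering \eqref{eq-hn-16}. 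For $\widetilde G$ one expects analogously $\widetilde{G}(x,y) \approx W(x-y) - W(x-y^*)$ in the boundary layer, giving the regular part $\widetilde H(x,y) \approx W(x-x^*)$ and $\partial_{n_x}\widetilde H(x,x) \sim |2\mathbf{d}(x)|^{1-(n-2)p}$ up to a positive constant, which is exactly the claimed \eqref{eq-h2}. Thus the proof amounts to making this half-space reflection picture rigorous with controlled errors.

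The concrete steps I would carry out: (i) flatten the boundary near a point $x_0\in\partial\Omega$ via a $C^2$ diffeomorphism, reducing to a domain that is a small perturbation of a half-space; (ii) on the half-space $\mr^n_+$, compute the Green's function of $-\Delta$ explicitly ($G_+(x,y) = \Gamma(x-y) - \Gamma(x-y^*)$) and verify directly that the associated $\widetilde{G}_+$ solving $-\Delta_x\widetilde G_+ = G_+^p$ satisfies $\partial_{x_n}\widetilde H_+(x,x) = c_{n,p}\, x_n^{1-(n-2)p}$ for an explicit $c_{n,p}>0$ — this is the model computation and the source of the exponent; (iii) estimate the difference $\widetilde G - \widetilde G_+$ (after the diffeomorphism) by writing down the equation it solves, whose right-hand side is $G^p - G_+^p$ plus lower-order terms from the change of variables, and using elliptic regularity / potential estimates together with the known boundary behavior of $G$ and its derivatives to show this difference contributes a strictly lower-order term in $\mathbf{d}(x)$; (iv) finally combine to get the lower bound \eqref{eq-h2} for $\mathbf{d}(x)<\delta$.

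The main obstacle I anticipate is step (iii): controlling the error $\widetilde G - \widetilde G_+$ uniformly as $x$ approaches the boundary. Because $G^p(\cdot,y)$ has a non-integrable-looking singularity (only borderline integrable), Newtonian-potential estimates for $\widetilde G$ near the diagonal are delicate, and one must track precisely how errors scale in $|x-y|$ and in $\mathbf{d}(x)$ simultaneously — in particular one needs that the perturbation of the domain from a half-space (curvature of $\partial\Omega$, measured at scale $\mathbf d(x)$) produces corrections of order $o(\mathbf d(x)^{1-(n-2)p})$ in the normal derivative, which forces the restriction to $p$ close to $1$ (the constant $\alpha_n$) so that the gap between the main exponent $1-(n-2)p$ and the error exponents stays positive. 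I would also expect the hypothesis $n\ge 5$ to be used here to have enough room for the potential estimates and to ensure $W$ has the clean homogeneity claimed; handling $n=3,4$ would require separate, more careful arguments (and is presumably why the theorem is stated only for $n\ge5$).
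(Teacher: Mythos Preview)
Your overall architecture (rescale to the half-space model, then control the perturbation) matches the paper's, but there is a genuine gap in step (ii), and it is exactly the step you treat as routine. You claim that on $\mr^n_+$ the method of images gives $\widetilde G_+(x,y)\approx W(x-y)-W(x-y^*)$, hence $\widetilde H_+(x,y)\approx W(x-y^*)$ and $\partial_{x_n}\widetilde H_+(x,x)=c_{n,p}\,x_n^{1-(n-2)p}$ with an \emph{explicit positive} $c_{n,p}$. This image picture is only valid at $p=1$: the source in \eqref{eq-GG} on the half-space is
\[
G_+^p(x,y)=\bigl(\Gamma(x-y)-\Gamma(x-y^*)\bigr)^p,
\]
which for $p>1$ is \emph{not} $\Gamma^p(x-y)-\Gamma^p(x-y^*)$, so $\widetilde G_+$ does not split by reflection and $\widetilde H_+$ is not $W(\cdot-y^*)$. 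In the paper, the half-space normal derivative $\partial_{x_n}\widetilde H_0(e_n,e_n)$ is computed as a difference of two $n$- and $p$-dependent integrals (Lemma~\ref{lem-as}), and its sign is the crux of the theorem, not a formality.

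Consequently you have also misidentified where the constraints $n\ge 5$ and $p\in[1,1+\alpha_n)$ come from. They do \emph{not} arise from error control in your step (iii); the paper's rescaling/perturbation argument (Lemma~\ref{lem-hd}) works for all $p\in[1,\tfrac{n}{n-2})$. The restriction to $p$ near $1$ appears because the sign of $\partial_{x_n}\widetilde H_0(e_n,e_n)$ can only be checked at $p=1$---where $\widetilde G_0$ is symmetric and the homogeneity $\widetilde H_0(te_n,te_n)=t^{4-n}\widetilde H_0(e_n,e_n)$ together with $\widetilde H_0(e_n,e_n)>0$ (maximum principle) gives the factor $(4-n)<0$, hence the correct sign precisely when $n\ge 5$---and is then propagated to $p\in[1,1+\alpha_n)$ by continuity of the integrals in $p$. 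To repair your argument you must replace the image heuristic in step (ii) by this explicit computation at $p=1$ plus a continuity argument.
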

The proof for this result is much more involved than that of the estimate \eqref{eq-hn-16} for the function $H$ which can be obtained directly by applying the Maximum principle. To prove the result, we shall rescale the function $\widetilde{H}$ in a suitable way and investigate its limit. Then we shall see that \textbf{(A1)} holds true provided a value of certain integration is not zero, where the value depends only on the values $n$ and $p$ (see Lemma \ref{lem-as}). We may check that it is true if $p=1$ and then a continuity argument will prove Theorem \ref{thm-3}. When the values of $n$ and $p$ are given, one might test that the value is nonzero in a numerical way and we guess that the estimate \eqref{eq-h2} of $\widetilde{H}$ given in \textbf{(A1)} is always true. 
\

Now we are ready to state our result for the case $p < \frac{n}{n-2}$.\begin{thm}\label{thm-2} Let $p \in \left[1, \frac{n}{n-2}\right)$. Assume that the estimate \eqref{eq-h2} of \textbf{(A1)}  holds. Then, any sequence of solutions $\{(u_{\epsilon}, v_{\epsilon})\}_{\epsilon >0}$ to \eqref{eq-main} satisfying \eqref{eq-energy} is uniformly bounded near the boundary and blows up in an interior point of $\Omega.$ 
\end{thm}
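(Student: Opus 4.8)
The plan is to follow the same contradiction scheme as in the proofs of Theorems \ref{thm-1} (the cases $p>\frac{n}{n-2}$ and $p=\frac{n}{n-2}$), but now replacing the role of the regular part $H$ of Green's function by the function $\widetilde H$ and using the sharp boundary estimate \eqref{eq-h2} from \textbf{(A1)}. First I would invoke Lemma \ref{lem-lam} to see that any type $(ME)$ sequence $\{(u_\ep,v_\ep)\}$ blows up, so the maximum point $x_\ep$ of \eqref{eq-max} is well defined and $\|u_\ep\|_{L^\infty}\to\infty$. Arguing by contradiction, I assume $\mathbf d(x_\ep)=\mathrm{dist}(x_\ep,\partial\Omega)\to 0$. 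The core is the Pohozaev-type identity \eqref{eq-hn-15} on the annulus $B(x_\ep,2d_\ep)\setminus B(x_\ep,d_\ep)$-type region with $d_\ep=\mathbf d(x_\ep)/4$; the strategy is to extract a lower bound for its left-hand side that diverges at a definite rate in $\mathbf d(x_\ep)$, and an upper bound for its right-hand side that is strictly smaller, yielding the contradiction.

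The key steps, in order. \emph{(i)} Rescaling and limit profile: rescale $(u_\ep,v_\ep)$ around $x_\ep$ by the concentration parameter $\lambda_\ep$ (as in Lemma \ref{lem-lam}) and identify the limit as the standard entire solution $(U,V)$ of the limiting Lane-Emden system with $pq=\left(\frac{n+2}{n-2}\right)^2$; here I use the global upper bound for rescaled solutions from Lemma \ref{lem-gu} (valid since $p\ge1$) to control the convergence and the tails. \emph{(ii)} Green's representation on $\partial B(x_\ep,2d_\ep)$: since $p<\frac{n}{n-2}$, the relevant object is not $G$ itself but $\widetilde G$, because $v_\ep$ solves $-\Delta v_\ep=u_\ep^{q_\ep}$ with $u_\ep^{q_\ep}$ heavily concentrated, so $u_\ep$ is represented through $G$ but $v_\ep\sim c_\ep\int G(\cdot,y)(\text{bubble in }y)^{q_\ep}$, whose regular part is governed by $\widetilde G$ and $\widetilde H$ (cf.\ \eqref{eq-GG}, \eqref{eq-h1-2}). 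Using Lemma \ref{lem-g1} and Lemma \ref{lem-g2} I would write, on $\partial B(x_\ep,2d_\ep)$,
\begin{equation*}
u_\ep(x)= \mu_\ep\, G(x,x_\ep)\bigl(1+o(1)\bigr),\qquad
v_\ep(x)= \nu_\ep\, \widetilde G(x,x_\ep)\bigl(1+o(1)\bigr),
\end{equation*}
with explicitly tracked scaling constants $\mu_\ep,\nu_\ep$ (in terms of $\lambda_\ep$, $\|U\|_{L^q}$, $\|V\|_{L^p}$). \emph{(iii)} Lower bound for the LHS of \eqref{eq-hn-15}: substituting these representations, the boundary integrals reduce, after cancelling the singular self-interaction parts of $G$ and $\widetilde G$ (they are smooth where the annulus sits, away from $x_\ep$), to an expression proportional to $\partial_{n_{x_\ep}}\widetilde H(x_\ep,x_\ep)$ times $\mu_\ep\nu_\ep$; by \textbf{(A1)} this is $\ge C\,\mu_\ep\nu_\ep\,\mathbf d(x_\ep)^{1-(n-2)p}$. \emph{(iv)} Upper bound for the RHS of \eqref{eq-hn-15}: using the decay of $(u_\ep,v_\ep)$ away from $x_\ep$ coming from step (i) and Lemma \ref{lem-gu}, estimate $\int_{\partial B(x_\ep,2d_\ep)}v_\ep^{p+1}\nu_j$ and $\int_{\partial B(x_\ep,2d_\ep)}u_\ep^{q_\ep+1}\nu_j$ by $C\,d_\ep^{n-1}(\nu_\ep d_\ep^{-(n-2)p})^{p+1}$ and $C\,d_\ep^{n-1}(\mu_\ep d_\ep^{-(n-2)})^{q_\ep+1}$ respectively; comparing powers of $d_\ep$ one checks these are of strictly lower order than the LHS lower bound. \emph{(v)} Conclude: for $\ep$ small the LHS exceeds the RHS, contradicting \eqref{eq-hn-15}; hence $\mathbf d(x_\ep)\ge c>0$, i.e.\ $(u_\ep,v_\ep)$ is uniformly bounded near $\partial\Omega$ (by elliptic estimates and the $L^\infty$ bound away from the single interior concentration point) and blows up at an interior point.

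The main obstacle I expect is step (ii)–(iii): obtaining the Green's representation for $v_\ep$ on the shrinking sphere $\partial B(x_\ep,2d_\ep)$ with a genuinely $o(1)$ relative error. The difficulty, flagged already in the introduction, is that $G(x,x_\ep)$ and $\widetilde G(x,x_\ep)$ blow up as $d_\ep\to0$, so one cannot simply freeze them; one must split the representation integral into the near-region (handled by the rescaled profile $U,V$ and their known decay) and the far-region (handled by the uniform bound of Lemma \ref{lem-gu} and the boundedness of $u_\ep,v_\ep$ there), and then show the regular-part contribution that survives is exactly the $\partial_{n}\widetilde H(x_\ep,x_\ep)$ term with the right constant — this is precisely what Lemmas \ref{lem-g1}, \ref{lem-g2} are designed to deliver, and the bookkeeping of scaling constants $\mu_\ep,\nu_\ep$ through the asymmetric system (different scalings for $u$ and $v$ when $p\ne q$) is where the argument is most delicate. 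A secondary technical point is that $\widetilde H$ is only $C^1$ (not smooth), so in the Pohozaev computation one must be careful to only use first derivatives of $\widetilde H$, which is exactly the form in which \textbf{(A1)} is stated.
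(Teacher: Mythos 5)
Your overall scheme (contradiction via the Pohozaev identity on $\partial B(x_\ep,2d_\ep)$, lower bound from \textbf{(A1)}, upper bound from decay) matches the paper, but step (ii) contains a substantive error that propagates through steps (iii) and (iv). You have the roles of $G$ and $\widetilde G$ swapped. Since $-\Delta v_\ep=u_\ep^{q_\ep}$ with $u_\ep^{q_\ep}$ concentrating like a Dirac mass, it is $v_\ep$ that is well-approximated by a multiple of $G(\cdot,x_\ep)$ (its regular part is $H$, not $\widetilde H$); then, since $-\Delta u_\ep=v_\ep^p\approx A_U^p G^p(\cdot,x_\ep)$ and $\widetilde G$ is by \eqref{eq-GG} precisely the solution of $-\Delta_x\widetilde G=G^p$, it is $u_\ep$ that is approximated by $A_{U,\ep}^p\widetilde G(\cdot,x_\ep)$. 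This is the content of Lemma \ref{lem-v} and Lemma \ref{lem-u} in Section~7, which you should be invoking here; Lemmas \ref{lem-g1} and \ref{lem-g2} are hypothesized for $p>\frac{n}{n-2}$ and $p=\frac{n}{n-2}$ respectively and do not apply. Your displayed Ansatz $u_\ep\sim\mu_\ep G,\ v_\ep\sim\nu_\ep\widetilde G$ is therefore backwards, and the subsequent bookkeeping of powers of $d_\ep$ cannot be salvaged without this correction.

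There is also a second gap in step (iv). You propose to treat $\int_{\partial B(x_\ep,2d_\ep)}v_\ep^{p+1}\nu_j\,dS_x$ as part of the right-hand side and bound it as a lower-order error. But with the correct identification $v_\ep\sim G(\cdot,x_\ep)$, one has $v_\ep^{p+1}\sim d_\ep^{-(n-2)(p+1)}$ on $\partial B(x_\ep,2d_\ep)$, so this surface integral is of order $d_\ep^{(n-1)-(n-2)(p+1)}=d_\ep^{1-(n-2)p}$ (up to the $\lambda_\ep$ scaling), which is exactly the same order as the gradient terms on the left. It cannot be discarded. The paper handles this by rearranging the Pohozaev identity into the form \eqref{eq-hn-16}, moving the $v_\ep^{p+1}$ term to the left so it joins the gradient terms in the ``self-energy'' $I(r)$; the integration-by-parts identity $\frac{1}{p+1}\int_{\partial A_r}G^{p+1}\nu_j\,dS=\int_{A_r}(-\Delta\widetilde G)\,\partial_{x_j}G\,dx$ shows this combination is $r$-independent and equal to $(n-2)c_n|S^{n-1}|\,\partial_{x_j}\widetilde H(x_\ep,x_\ep)$. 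Only the $u_\ep^{q_\ep+1}$ term stays on the right as the quantity to be shown strictly smaller. You should also cite Lemma \ref{lem-lam} and Lemma \ref{lem-bb} for $N_\ep=\lambda_\ep d_\ep\to\infty$, which is the lever that turns the power comparison into a contradiction.
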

In order to prove this result, we shall follow the strategy used for Theorem \ref{thm-1}. However, some more careful analysis is required. As there, we shall obtain a contradiction from the Pohozaev type identity when we assume that the blow up point converges to a boundary point. In this case, it is suitable to write the Pohozaev type identity \eqref{eq-hn-15} in the following way
\begin{equation}\label{eq-hn-16}
\begin{split}
&\left[-\int_{\partial B(x_{\ep}, 2d_{\ep})} \left( \frac{\partial u_{\ep}}{\partial \nu} \frac{\partial v_{\ep}}{\partial x_j} + \frac{\partial v_{\ep}}{\partial \nu} \frac{\partial u_{\ep}}{\partial x_j} \right) d S_x + \int_{\partial B(x_{\ep}, 2d_{\ep})}(\nabla u \cdot \nabla v) \nu_j dS_x \right.
\\
&\qquad\left.-\frac{1}{p+1} \int_{\partial B(x_{\ep}, 2d_{\ep})} v_{\ep}^{p+1} \nu_j dS_x\right] = \frac{1}{q_{\ep}+1} \int_{\partial B(x_{\ep}, 2d_{\ep})} u_{\ep}^{q_{\ep}+1} \nu_j dS_x,
\end{split}
\end{equation}
First, to get a sharp estimate of $u_{\ep}(y)$ for $y \in \partial B(x_{\ep}, 2d_{\ep})$ for computing the left hand side, we will also need to know a sharp estimate of the value $v_{\ep}(x)$ for all $x \in \Omega$. These estimates will be achieved through Lemma \ref{lem-v} and Lemma \ref{lem-u}. Injecting these estimates into \eqref{eq-hn-16}, we will estimate the left hand side as a multiple of the derivative $\partial_{x_j} \widetilde{H}(x,x_{\ep})|_{x=x_{\ep}}$. Then, by applying the lower bound \eqref{eq-h2} we will get a sharp lower bound of the value \eqref{eq-hn-16}. On the other hand, an upper bound of the value \eqref{eq-hn-16} will be obtained from the right hand side using the decay estimate of $u_{\ep}$. Then, those upper and lower bounds will lead a contradiction again. 
\

Once we know that the blow up point converges to an interior point, then the argument of Guerra \cite{G} can be applied to investigate the further detail on the blow up solutions. Hence the result of Theorem \ref{gue} holds without the convexity assumption on the domain $\Omega$ for $p \in \left[ \frac{n}{n-2}, \frac{n+2}{n-2}\right]$ and with an additional assumption for $p \in \left[1, \frac{n}{n-2}\right)$. 
\

We remark that when $p=1$, the problem \eqref{eq-main} is reduced to the biharmonic equation
\begin{equation*}
\left\{\begin{split}
\Delta^2 u = u^q &\quad \textrm{in}~\Omega,
\\
u >0&\quad \textrm{in}~\Omega,
\\
\Delta u = u =0 &\quad \textrm{on}~\partial \Omega.
\end{split}
\right.
\end{equation*}
As for this problem, the asymptotic behavior of the least energy solutions as $q \nearrow \frac{n+4}{n-4}$ was studied first by Chou-Geng \cite{CG} with the convexity assumption on the domain. Later, Ben Ayed and El Mehdi \cite{BE} removed the convexity assumption. Our argument is similar but different to that of \cite{BE} in the point that they used the Pohozaev type identity implicitly and the inequality \eqref{eq-h2} with $p=1$ while we use the Pohozev type identity in a more direct way. In addiction, up to author's best knowledge, the inequality \eqref{eq-h2} even for $p=1$ is first proved rigorously in this paper.  
\

 Before finishing this section, we mention the Brezis-Nirenberg type problem 
\begin{equation}\label{eq-BN}
\left\{ \begin{array}{ll} -\Delta u = v^{p}+\lambda v&\textrm{in}~\Omega,
\\
-\Delta v = u^q + \mu u&\textrm{in}~\Omega,
\\
u>0,~v>0&\textrm{in}~\Omega,
\\
u=v=0&\textrm{on}~\partial \Omega,
\end{array}
\right.
\end{equation}
where $(p,q)$ satisfies  the relation \eqref{eq-cr-hy} and $\lambda >0$ and $\mu >0$. Hulshof-Mitidieri-Vorst \cite{HMV} found nontrivial solutions to \eqref{eq-BN} for $0 < \lambda \mu < \lambda_1 (\Omega)^2$ where $\lambda_1 (\Omega)$ is the first eigenvalue of $-\Delta$ on $\Omega$ with the Dirichlet boundary condition. Guerra \cite{G} studied also the asymptotic behavior of energy minimizing solution as $(\lambda, \mu) \rightarrow (0,0)$ when $\Omega$ is a bounded convex domain. It would be not difficult to modify our arguments in this paper to remove the convexity assumption also for the problem \eqref{eq-BN}.
\ 

This paper is organized as follows. In Section 2, we are concerned about the properties of Green's function along with the related function $\widetilde{G}$ and its regular part $\widetilde{H}$. We shall obtain a sharp estimate of Green's function in Lemma \ref{lem-h-asym} which will be used to prove Theorem \ref{thm-3}. In Section 3, we show that a sequence of the minimal energy solutions should blow up and that the blow up point cannot approach to the boundary too fast (see Lemma \ref{lem-bb}). Also we shall recall the global $L^{\infty}$ upper estimate of Guerra \cite{G} for the blow up solutions and the Pohozaev type identity. In Section 4, we will obtain a sharp estimate of the function $v_{\ep}$ on an annulus centered at the blow up point for all $p \in \left[1, \frac{n+2}{n-2}\right)$. Section 5 is devoted to prove Theorem \ref{thm-1} for the case $p \in \left( \frac{n}{n-2}, \frac{n+2}{n-2}\right)$. First we shall obtain a sharp estimate of the solution $u_{\ep}$ near the blow up point, and then derive a contradiction from the Pohozaev type identity if the blow up point approaches to the boundary. These arguments will be modified in Section 6 to prove Theorem \ref{thm-1} for the case $p = \frac{n}{n-2}$. Section 7 is aimed to prove Theorem \ref{thm-2} concerning the case $p \in \left[1, \frac{n}{n-2}\right)$. To handle that case, a more careful analysis will be conducted to estimate the solution $u_{\ep}$ on an annulus centered at the blow-up point. Then we shall derive a contradiction again from the Pohozaev type identity if the blow up point approaches to the boundary. In Section 8 we shall prove Lemma \ref{lem-as} and Theorem \ref{thm-3} concerning the properties of the function $\widetilde{H}$. In Appendix A, we shall give the proof of Lemma \ref{lem-h-asym} on the sharp estimate of Green's function.

\bigskip
\noindent \textbf{Notations.}

\medskip
\noindent Here we list some notations which will be used throughout the paper.

\noindent - $\{(u_{\epsilon}, v_{\epsilon})\}_{\epsilon >0}$ always represent a sequence of solutions to \eqref{eq-main} with $(p,q_{\epsilon})$ satisfying \eqref{eq-pq-e} and the the minimal energy type condition \eqref{eq-energy}.

\noindent - $C > 0$ is a generic constant that may vary from line to line.

\noindent - For $k \in \mathbb{N}$ we denote by $B^k (x_0,r)$ the  ball $\{ x \in \mathbb{R}^{k}: |x-x_0| < r \}$ for each $x_0 \in \mathbb{R}^k$ and $r>0$.

\noindent - For $x \in \Omega$ we denote by $\textrm{dist}(x,\partial \Omega)$ the distance from $x$ to $\partial \Omega$ and we denote $\mathbf{d}(x):= \textrm{dist}(x, \partial \Omega)$.

\noindent - For a domain $D \subset \mathbb{R}^n$, the map $\nu = (\nu_1, \cdots, \nu_n): \partial D \to \mathbb{R}^n$ denotes the outward pointing unit normal vector on $\partial D$.

\noindent - $dS$ stands for the surface measure.

\noindent - $|S^{n-1}| = 2\pi^{n/2}/\Gamma(n/2)$ denotes the Lebesgue measure of $(n-1)$-dimensional unit sphere $S^{n-1}$.

\noindent - For $f : \mathbb{R}^n \times \mathbb{R}^n \rightarrow \mathbb{R}$ we denote $\nabla_1 f (x,y) :=\left.\nabla_a f (a,y)\right|_{a=x}$ .


\section{Preliminary results}
In this section we are concerned with Green's function and the related function $\widetilde{G}$ and its regular part $\widetilde{H}$. The property of Green's function $G$ and its regular part $H$ is important throughout the paper and the function $\widetilde{G}$ with its regular part $\widetilde{H}$ is essential in Section 6 where we treat the case $p \in \left[1, \frac{n}{n-2}\right)$. 
\subsection{Green's function and its regular part}
Let $G$ be Green's function of $-\Delta$ on $\Omega$ with  the Dirichlet boundary condition. It is divided into a singular part and a regular part as
\begin{equation}\label{eq-g-decom}
G(x,y) = \frac{c_n}{|x-y|^{n-2}} - H(x,y),
\end{equation}
where $c_n = {1}/{(n-2)|S_{n-1}|}$ and the regular part $H: \Omega \times \Omega \rightarrow \mathbb{R}$ is the function such that
\begin{equation*}
\left\{\begin{array}{ll}
-\Delta_x H(x,y) = 0&\quad x \in \Omega,
\\
H(x,y) = G(x,y)&\quad x \in \partial \Omega.
\end{array}\right.
\end{equation*}
Let $\mathbf{d}(x)= \textrm{dist}(x,\partial \Omega)$ for $x \in \Omega$. Take a small constant $\delta >0$. Then, for any $x$ with $d(x) < \delta$, we can find a unique direction $n_x \in S^{n-1}$ such that $x + \mathbf{d}(x) n_x \in \partial \Omega$ and we set $x^{*} = x + 2 \mathbf{d}(x) n_x$. We recall the following result. 
\begin{lem}[Rey \cite{R2}]\label{lem-Rey} The following estimates hold:
\begin{equation}\label{eq-h1-0}
H(x,y) = \frac{c_n}{|x-y^*|^{n-2}} + o (\mathbf{d}(y)^{-(n-2)})
\end{equation}
and
\begin{equation}\label{eq-h1-1}
\nabla_y H(y,x)=\nabla_y H(x,y) = -\frac{(n-2) c_n (x-y^*)}{|x-y^*|^{n}} + o (\mathbf{d}(y)^{-(n-1)}),
\end{equation}
where the notation $o(\cdot)$ means that
\[\lim_{\mathbf{d}(y)\rightarrow 0} \frac{o \left( \mathbf{d}(y)^{-k}\right)}{ \mathbf{d}(y)^{-k}}=0\quad \textrm{for}~ k= n-2~ \textrm{or}~ n-1.\]
By taking $y=x$ in \eqref{eq-h1-1}, we can find a small constant $c>0$ such that 
\begin{equation}\label{eq-hn}
n_x \cdot \nabla_1 H(x,x) \geq \left[ \frac{(n-2)c_n}{2}\right] \mathbf{d}(x)^{-(n-1)}\quad \textrm{for}\quad x \in \Omega ~\textrm{with}~ \mathbf{d}(x) \leq c,
\end{equation}
where we have denoted $\nabla_1 H(x,x):= \nabla_x H(x,y)|_{y=x}$.
\end{lem}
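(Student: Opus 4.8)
The plan is to reduce both estimates to the explicit regular part of Green's function on a half-space. Write $\Gamma(\zeta) := c_n|\zeta|^{2-n}$ for the fundamental solution of $-\Delta$, so that $G(x,y) = \Gamma(x-y) - H(x,y)$ and $H(\cdot,y)$ is the harmonic function on $\Omega$ equal to $\Gamma(\cdot-y)$ on $\partial\Omega$. The guiding observation is that, after zooming in near the boundary point $\bar x_0$ closest to $y$, the domain looks like a half-space, and $y^*$ is to first order the reflection of $y$ across the tangent plane at $\bar x_0$; since the half-space regular part is exactly $\Gamma(x-y^*)$, the estimates \eqref{eq-h1-0} and \eqref{eq-h1-1} assert that $H$ is well approximated by this model.

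First I would prove \eqref{eq-h1-0} by the maximum principle. Fix $y$ with $d := \mathbf{d}(y)$ small, so that $y^* \notin \overline{\Omega}$ and $w(x) := H(x,y) - \Gamma(x-y^*)$ is harmonic in $\Omega$. On $\partial\Omega$ we have $w(x) = \Gamma(x-y) - \Gamma(x-y^*)$, and since $\partial\Omega$ is smooth, near $\bar x_0$ one finds $|x-y|^2 - |x-y^*|^2 = O(d\,|x-\bar x_0|^2)$ together with $|x-y|^2,\,|x-y^*|^2 \gtrsim |x-\bar x_0|^2 + d^2$; by the mean value theorem this gives $|w(x)| \le C\,d\,|x-\bar x_0|^2\,(|x-\bar x_0|^2+d^2)^{-n/2}$ near $\bar x_0$ and $|w(x)| \le C$ elsewhere on $\partial\Omega$. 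The first bound is maximal at the scale $|x-\bar x_0| \sim d$, where it is of order $d^{3-n}$, so $\sup_{\partial\Omega}|w| \le C\,d^{3-n} = o(d^{2-n})$; the maximum principle then propagates this bound to all of $\Omega$, which proves \eqref{eq-h1-0} uniformly in $x$, and the symmetry $H(x,y) = H(y,x)$ gives it with the variables interchanged.

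For the gradient \eqref{eq-h1-1} one cannot differentiate the $o(\cdot)$ in \eqref{eq-h1-0} term by term, so I would compare $H(x,\cdot)$ with the competitor $z \mapsto \Gamma(x-z^*)$ on the interior ball $B(y,d/2) \subset \Omega$ and invoke interior elliptic estimates. The map $z \mapsto z^* = z + 2\mathbf{d}(z)n_z$ is smooth in the collar $\{\mathbf{d} < \delta\}$, with differential $R_z + O(\mathbf{d}(z))$ where $R_z$ is the orthogonal reflection across the tangent plane at the foot of $z$, and with second derivatives bounded by the curvature of $\partial\Omega$; using $|x-z^*| \gtrsim d$, the chain rule then gives $\Delta_z\Gamma(x-z^*) = O(d^{1-n})$ on $B(y,d/2)$, while \eqref{eq-h1-0} gives $\sup_{B(y,d/2)}|H(x,\cdot) - \Gamma(x-(\cdot)^*)| = o(d^{2-n})$. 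Feeding these into the interior estimate $|\nabla\psi(y)| \lesssim d^{-1}\sup_{B(y,d/2)}|\psi| + d\,\sup_{B(y,d/2)}|\Delta\psi|$ with $\psi := H(x,\cdot) - \Gamma(x-(\cdot)^*)$ gives $\nabla_z(H(x,z) - \Gamma(x-z^*))|_{z=y} = o(d^{1-n})$, and since the chain rule gives $\nabla_z\Gamma(x-z^*)|_{z=y} = -R_y\nabla\Gamma(x-y^*) + O(d^{2-n})$ we conclude $\nabla_y H(x,y) = -R_y\nabla\Gamma(x-y^*) + o(\mathbf{d}(y)^{1-n})$, which is \eqref{eq-h1-1} (the two leading terms coincide on the diagonal $y=x$, which is all that is used below). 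Finally \eqref{eq-hn} follows by putting $y=x$: there $x-x^* = -2\mathbf{d}(x)n_x$, $|x-x^*| = 2\mathbf{d}(x)$ and $R_x n_x = -n_x$, so the leading term equals $\frac{(n-2)c_n}{2^{n-1}}\mathbf{d}(x)^{1-n}n_x$; pairing with $n_x$ and absorbing the $o(\mathbf{d}(x)^{1-n})$ remainder for $\mathbf{d}(x)$ small gives the lower bound $n_x\cdot\nabla_1 H(x,x) \ge C\,\mathbf{d}(x)^{1-n}$ as in \eqref{eq-hn}.

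The step I expect to be the main obstacle is the gradient estimate. Since \eqref{eq-h1-0} is only an $o$-bound it is not differentiable as it stands, and the argument rests entirely on the ``almost harmonicity'' $\Delta_z\Gamma(x-z^*) = O(d^{1-n})$ of the comparison function; establishing this requires carefully quantifying how far $z \mapsto z^*$ deviates from an exact reflection in terms of the $C^2$-geometry of $\partial\Omega$ — first-order control of $\partial z^*/\partial z$ together with a bound on its second derivatives — after which the interior elliptic estimate on the correct scale $d$ closes the argument.
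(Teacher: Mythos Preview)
Your argument is correct. The paper itself does not prove this lemma (it is attributed to Rey), but it does prove the sharper Lemma~\ref{lem-h-asym} in Appendix~A, and that proof offers a natural comparison.

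For the zeroth-order estimate \eqref{eq-h1-0} your maximum-principle argument is cleaner than the paper's: you bound $w=H(\cdot,y)-\Gamma(\cdot-y^*)$ on $\partial\Omega$ and propagate, whereas the appendix writes $w$ via the Poisson kernel and estimates the boundary integral by splitting into three regions. The Poisson-kernel route is heavier but yields the $x$-dependent remainder $O(\mathbf d(y)/|x-y^*|^{n-2})$ of Lemma~\ref{lem-h-asym}; your route gives the uniform $O(\mathbf d(y)^{3-n})$, which suffices for \eqref{eq-h1-0}.

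For the gradient the two proofs diverge more substantially. You differentiate in the \emph{second} variable, where $z\mapsto\Gamma(x-z^*)$ is only almost harmonic, and therefore must quantify $\Delta_z\Gamma(x-z^*)=O(d^{1-n})$ through the geometry of the reflection map before invoking the interior estimate with a source term. The paper instead differentiates in the \emph{first} variable (cf.\ \eqref{eq-h1-5}): since both $H(\cdot,y)$ and $\Gamma(\cdot-y^*)$ are exactly harmonic in $x$, the difference is harmonic and the bare gradient estimate $|\nabla w(x)|\lesssim \mathbf d(x)^{-1}\sup|w|$ applies with no Laplacian term to control. This bypasses precisely the step you flagged as the main obstacle. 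By symmetry of $H$ one recovers $\nabla_y$ information on the diagonal either way.

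You are also right that the leading term for $\nabla_y H(x,y)$ off the diagonal should carry the reflection $R_y$ (equivalently, the stated formula is literally the one for $\nabla_x H$); the two agree at $y=x$, which is all that \eqref{eq-hn} uses. Your final constant $(n-2)c_n/2^{\,n-1}$ is the correct one in the half-space model, and any positive constant suffices for the applications in the paper.
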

The estimate \eqref{eq-hn} will be essential in the proof of Theorem \ref{thm-1}.
In the next subsection, we will define the function $\widetilde{G}$ in terms of the function $G$ for the case $p \in \left[1, \frac{n}{n-2}\right)$ and define its $C^1$ regular part $\widetilde{H}$. In proving Theorem \ref{thm-3} in Section 8, we will need to  have sharp upper estimates of the values of $H(x,y)$ and $\nabla_x H(x,y)$ for all $(x,y) \in \Omega \times \Omega$. For this reason, we obtain an improved version of Lemma \ref{lem-Rey} in the following lemma. \begin{lem}\label{lem-h-asym} 
For $(x,y) \in \Omega \times \Omega$ we have
\begin{equation}\label{eq-h1-4}
H(x,y) = \frac{c_n}{|x-y^*|^{n-2}} + O \left( \frac{\mathbf{d}(y)}{|x-y^*|^{n-2}}\right),
\end{equation}
and
\begin{equation}\label{eq-h1-5}
\nabla_x H(x,y) = - \frac{(n-2)c_n (x-y^*)}{|x-y^*|^{n}} + O \left( \frac{\mathbf{d}(y)}{\mathbf{d}(x)|x-y^*|^{n-2}}\right),
\end{equation}
where the notation $O$ means that there is a constant $C>0$ such that
\[ \left| O( f(x,y))\right| \leq C |f(x,y)| \quad \textrm{for all}~ (x,y) \in \Omega \times \Omega.\]
\end{lem}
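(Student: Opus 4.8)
The goal is to improve Rey's estimates \eqref{eq-h1-0}--\eqref{eq-h1-1} from $o(\mathbf{d}(y)^{-(n-2)})$ to the quantitative form \eqref{eq-h1-4}--\eqref{eq-h1-5}. The plan is to work with the function
\[
\Phi(x,y) := H(x,y) - \frac{c_n}{|x-y^*|^{n-2}},
\]
which for fixed $y$ is harmonic in $x \in \Omega$ (the reflected pole $y^*$ lies outside $\bar\Omega$ when $\mathbf d(y)$ is small enough, so $x \mapsto c_n|x-y^*|^{2-n}$ is harmonic on $\Omega$, and $H(\cdot,y)$ is harmonic by definition). Thus the problem reduces to (i) a boundary estimate $|\Phi(x,y)| \le C\mathbf d(y)|x-y^*|^{2-n}$ for $x \in \partial\Omega$, followed by the maximum principle to propagate it to all of $\Omega$ with the same bound, because $\mathbf d(y)|x-y^*|^{2-n}$ is, up to a harmonic-majorant argument, controlled on $\partial\Omega$; and (ii) an interior-gradient estimate for harmonic functions to pass from \eqref{eq-h1-4} to \eqref{eq-h1-5}.

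First I would establish the boundary bound. For $x \in \partial\Omega$ one has $G(x,y)=0$, so $H(x,y) = c_n|x-y|^{2-n}$, and hence
\[
\Phi(x,y) = c_n\left( \frac{1}{|x-y|^{n-2}} - \frac{1}{|x-y^*|^{n-2}}\right), \qquad x \in \partial\Omega.
\]
Now $|x-y^*| = |x - y - 2\mathbf d(y) n_y|$, and since $y + \mathbf d(y)n_y$ is the nearest boundary point to $y$, a standard comparison on smooth domains gives $\big| |x-y| - |x-y^*| \big| \le C \mathbf d(y)^2/|x-y|$ — more precisely the reflection across the boundary is an approximate isometry near $\partial\Omega$, so $|x-y^*|$ and $|x-y|$ differ by $O(\mathbf d(y)^2/|x-y|)$ for $x$ on $\partial\Omega$, with the weaker but sufficient bound $||x-y|-|x-y^*|| \le C\mathbf d(y)$ always available. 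Using the elementary inequality $|a^{2-n}-b^{2-n}| \le (n-2)\max(a,b)^{1-n}|a-b|$ for $a,b>0$, and the fact that on $\partial\Omega$ we have $|x-y| \gtrsim \mathbf d(y)$ and $|x-y| \asymp |x-y^*|$, this yields $|\Phi(x,y)| \le C\mathbf d(y)|x-y^*|^{2-n}$ for $x \in \partial\Omega$. The delicate point is showing that the right-hand side majorant extends to a function that is harmonic or superharmonic in $x$ on $\Omega$ so that the maximum principle applies cleanly: here one compares with the Poisson-type quantity and uses that $\mathbf d(y)|x-y^*|^{2-n}$ is comparable on $\partial\Omega$ to a harmonic function (e.g.\ one can dominate $\Phi(\cdot,y)$ by $\mathbf d(y)$ times Green's function of a slightly larger domain plus a harmonic corrector, or simply iterate Rey's argument keeping track of constants). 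I expect this majorization step to be the main obstacle, since it is where the smoothness of $\partial\Omega$ and the geometry of the reflection $y \mapsto y^*$ really enter.

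Once \eqref{eq-h1-4} is established, \eqref{eq-h1-5} follows by a routine scaling argument. Fix $x$ with $\mathbf d(x)$ small; the function
\[
x' \mapsto \Psi(x',y) := H(x',y) - \frac{c_n}{|x'-y^*|^{n-2}}
\]
is harmonic in $x'$ on the ball $B(x, \tfrac12\mathbf d(x)) \subset \Omega$, and by \eqref{eq-h1-4} it satisfies $|\Psi(x',y)| \le C\mathbf d(y)|x'-y^*|^{2-n}$ there; since $|x'-y^*| \asymp |x-y^*|$ on that ball (as $\mathbf d(x) \le |x-y^*|$ up to a constant), interior gradient estimates for harmonic functions give
\[
|\nabla_{x'}\Psi(x,y)| \le \frac{C}{\mathbf d(x)} \sup_{B(x,\frac12\mathbf d(x))} |\Psi(\cdot,y)| \le \frac{C\,\mathbf d(y)}{\mathbf d(x)\,|x-y^*|^{n-2}},
\]
which is exactly the stated error term after noting $\nabla_x\big(c_n|x-y^*|^{2-n}\big) = -(n-2)c_n(x-y^*)|x-y^*|^{-n}$. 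For $x$ bounded away from $\partial\Omega$ the estimates are trivial since all quantities involved are bounded, so it suffices to treat $\mathbf d(x) < \delta$. I would also record the symmetry $H(x,y)=H(y,x)$, which is inherited from that of $G$, to get the version of \eqref{eq-h1-1} in both variables if needed later. The whole argument uses only the maximum principle, elementary inequalities for $|a-b|^{2-n}$, and interior estimates for harmonic functions, so modulo the majorization step it is short; this is consistent with the paper deferring it to Appendix A.
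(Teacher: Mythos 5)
Your overall architecture---harmonicity of $\Phi(\cdot,y):=H(\cdot,y)-c_n|\cdot-y^*|^{2-n}$, a boundary bound, propagation to the interior, then an interior gradient estimate on $B(x,\tfrac12\mathbf{d}(x))$ for \eqref{eq-h1-5}---is sound, and the maximum-principle propagation you sketch is if anything cleaner than what the paper does (the paper writes $\Phi$ as a Poisson integral $\int_{\partial\Omega}K(x,w)\Phi(w,y)\,dS_w$ with $|K(x,w)|\lesssim \mathbf{d}(x)/|x-w|^n$, splits $\partial\Omega$ into a neighborhood of the foot of the normal and its complement, and further into three regions $A_1,A_2,A_3$, to estimate the integral by hand). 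But two of your intermediate claims are wrong as stated, and they are not cosmetic. First, the ``more precise'' bound $\bigl||x-y|-|x-y^*|\bigr|\le C\mathbf{d}(y)^2/|x-y|$ fails for $x\in\partial\Omega$ with $|x-y|\asymp 1$: there the left side can be of size $\mathbf{d}(y)$ (the triangle inequality $\bigl||x-y|-|x-y^*|\bigr|\le|y-y^*|=2\mathbf{d}(y)$ is essentially sharp, e.g.\ for $\Omega$ a ball and $x$ the antipodal boundary point), while the right side is $O(\mathbf{d}(y)^2)$. Second, and more importantly, the ``weaker but sufficient'' bound $\le C\mathbf{d}(y)$ is \emph{not} sufficient near the reflection point: if $x\in\partial\Omega$ lies near the foot of the normal from $y$ then $|x-y|\asymp|x-y^*|\asymp\mathbf{d}(y)$, and $|a^{2-n}-b^{2-n}|\le(n-2)\min(a,b)^{1-n}|a-b|$ with $|a-b|\le2\mathbf{d}(y)$ only gives $|\Phi|\lesssim \mathbf{d}(y)^{1-n}\cdot\mathbf{d}(y)=\mathbf{d}(y)^{2-n}$, overshooting the target $\mathbf{d}(y)|x-y^*|^{2-n}\asymp\mathbf{d}(y)^{3-n}$ by a full factor $\mathbf{d}(y)^{-1}$. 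You genuinely need the second-order vanishing of the boundary graph: writing $w=(z,f(z))$ with $f(0)=0$, $\nabla f(0)=0$, one has $|w-y|^2-|w-y^*|^2=-4f(z)\mathbf{d}(y)$, hence $\bigl||w-y|-|w-y^*|\bigr|\le C|z|^2\mathbf{d}(y)/|w-y|\le C\mathbf{d}(y)|w-y|$, and only this quadratic gain gives the uniform boundary estimate $|\Phi(w,y)|\le C\mathbf{d}(y)|w-y^*|^{2-n}$ on $\partial\Omega$. This is precisely the input the paper extracts in its $I_1$ estimate.

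Once the boundary bound is in hand, the ``delicate majorization step'' you flag is illusory: for $\mathbf{d}(y)<\delta$ the reflected point $y^*$ lies outside $\overline\Omega$, so $x\mapsto\mathbf{d}(y)|x-y^*|^{2-n}$ is itself harmonic on $\Omega$, and applying the maximum principle to $\Phi(\cdot,y)\mp C\mathbf{d}(y)|\cdot-y^*|^{2-n}$ gives \eqref{eq-h1-4} at once, with no harmonic corrector, enlarged domain, or iteration of Rey's argument. (The regime $\mathbf{d}(y)\ge\delta$ is trivial, as $H$, $|x-y^*|^{2-n}$, and $\mathbf{d}(y)/|x-y^*|^{n-2}$ are then all bounded above and below.) So your route does close, and more economically than the paper's Poisson-kernel computation; but the real work is the boundary estimate via $f(z)=O(|z|^2)$, and that is exactly the part of your write-up that needs to be fixed.
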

\begin{proof}The proof is deferred to Appendix A. 
\end{proof}

For later use we formulate the above result as follows:
\begin{equation}\label{eq-h1-3}
\begin{split}
G(x,y) &=\frac{c_n}{|x-y|^{n-2}} -H(x,y)
\\
& = \frac{c_n}{|x-y|^{n-2}} - \frac{c}{|x-y^*|^{n-2}} + O \left( \frac{\mathbf{d}(y)}{|x-y^*|^{n-2}}\right)\quad \forall ~(x,y) \in \Omega \times \Omega.
\end{split}
\end{equation}
\subsection{The function $\widetilde{G}$ and its regular part  $\widetilde{H}$} In order to study our problem for the case $p \in \left[1, \frac{n}{n-2}\right)$, we need to consider the function $\widetilde{G}: \Omega \times \Omega \rightarrow \mathbb{R}$ defined by
\begin{equation}\label{eq-h1-6}
\begin{cases} 
-\Delta_x \widetilde{G}(x,y)= G^p (x,y)&x \in \Omega,
\\
\quad\quad\widetilde{G}(x,y) =0& x \in \partial \Omega.
\end{cases}
\end{equation}
We define its $C^1$ regular part $\widetilde{H}: \Omega \times \Omega \rightarrow \mathbb{R}$ by
\begin{equation}\label{eq-h1-2}
\widetilde{H}(x,y) =\left\{\begin{array}{ll} \frac{\alpha_1}{|x-y|^{p(n-2)-2}}- \widetilde{G}(x,y),& \quad \textrm{for}~ p \in \left[1, \frac{n-1}{n-2}\right),
\\
 \frac{\alpha_1}{|x-y|^{p(n-2)-2}} -\frac{\alpha_2 H(x,y)}{|x-y|^{[(n-2)p-n]}} - \widetilde{G}(x,y), &\quad \textrm{for}~ p \in \left[ \frac{n-1}{n-2}, \frac{n}{n-2}\right),
\end{array}
\right.
\end{equation}
where
\begin{equation}\label{eq-alpha}
\alpha_1 = \frac{c_n^p}{[(n-2)p-2][(n-2)p-n]}\quad \textrm{and}\quad 
\alpha_2= \frac{pc_n^{p-1}}{[(n-2)p-n][(n-2)p-2n+2]}.
\end{equation}
These functions then actually have the $C^1$ regularity as the following lemma shows.
\begin{lem} For each $y \in \Omega$, the function $x \in \Omega \rightarrow \widetilde{H}(x,y)$ is contained in $C^1_{loc}(\Omega)$.
\end{lem}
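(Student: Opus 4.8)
For each fixed $y \in \Omega$, the function $x \mapsto \widetilde{H}(x,y)$ defined by \eqref{eq-h1-2} belongs to $C^1_{loc}(\Omega)$.

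\textbf{Plan of proof.} The plan is to show that the singular parts subtracted from $\widetilde{G}(\cdot,y)$ in \eqref{eq-h1-2} are chosen precisely so that $\widetilde{H}(\cdot,y)$ solves an elliptic equation $-\Delta_x \widetilde{H}(x,y) = F(x,y)$ with a right-hand side $F$ that lies in $L^s_{loc}(\Omega)$ for some $s > n$, whence elliptic regularity ($W^{2,s}_{loc} \hookrightarrow C^1_{loc}$) gives the conclusion. First I would fix $y \in \Omega$ and compute $-\Delta_x$ of the explicit singular terms. Using that $-\Delta(|x-y|^{-k}) = k(k - (n-2))\,c_n^{-1}\cdot(\text{const})\,|x-y|^{-k-2}$ away from $x = y$ — more precisely, $-\Delta |x|^{2-m} = (m-2)(m-n)\,|x|^{-m}$ — one checks that with $m = (n-2)p - 2$ the leading term $\alpha_1 |x-y|^{2-m}$ satisfies $-\Delta_x\big(\alpha_1 |x-y|^{-[(n-2)p-2]+2}\big) = c_n^p |x-y|^{-p(n-2)} \cdot (\text{normalization})$, which matches the most singular part of $G^p(x,y) \sim c_n^p|x-y|^{-p(n-2)}$. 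The constant $\alpha_1$ in \eqref{eq-alpha} is exactly the one making this identity hold. In the range $p \in [1, \frac{n-1}{n-2})$ one has $p(n-2) < n-1 < n$, so the remainder $G^p(x,y) - c_n^p|x-y|^{-p(n-2)}$ is, by the expansion of $G$ (or simply $G \le c_n|x-y|^{2-n}$ and smoothness of $H$), integrable to a power $> n$ near $x=y$; hence $-\Delta_x\widetilde{H}(\cdot,y) \in L^s_{loc}$ with $s>n$ and we are done in this case.

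For $p \in [\frac{n-1}{n-2}, \frac{n}{n-2})$ the exponent $p(n-2)$ lies in $[n-1, n)$, so subtracting only the leading term leaves a remainder that is still too singular (borderline non-$L^n$). Here the key step is to use the refined expansion $G(x,y) = c_n|x-y|^{2-n} - H(x,y)$ with $H$ bounded and $C^1$ near the (interior) point $y$, and expand
$$
G^p(x,y) = c_n^p|x-y|^{-p(n-2)} - p\,c_n^{p-1}|x-y|^{-(p-1)(n-2)}H(x,y) + R(x,y),
$$
where $R(x,y) = O(|x-y|^{-(p-2)_+(n-2)})$ or better (by Taylor's theorem applied to $t \mapsto t^p$), which now has a singularity of order $< n - 1 \cdot$ something, in any case integrable to a power $>n$. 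The second subtracted term $\alpha_2 H(x,y)|x-y|^{-[(n-2)p-n]}$ in \eqref{eq-h1-2} is designed so that $-\Delta_x$ of it cancels the middle term $-p\,c_n^{p-1}|x-y|^{-(p-1)(n-2)}H(x,y)$ up to a lower-order error; this is where the Leibniz rule $-\Delta(\phi\psi) = (-\Delta\phi)\psi + (-\Delta\psi)\phi - 2\nabla\phi\cdot\nabla\psi$ is used, with $\phi = \alpha_2|x-y|^{-[(n-2)p-n]}$, $\psi = H(x,y)$, and the exponent $(n-2)p - n \in [-1,0)$ together with $-\Delta_x H = 0$ makes the cross terms and the $(-\Delta\psi)\phi$ term vanish or be lower order, leaving exactly $-p\,c_n^{p-1}|x-y|^{-(p-1)(n-2)}H(x,y)$ plus an error in $L^s_{loc}$, $s>n$. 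The constant $\alpha_2$ in \eqref{eq-alpha} is precisely the one enforcing this cancellation.

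\textbf{Main obstacle.} The routine part is the pointwise $-\Delta$ computations of radial powers; the delicate bookkeeping is in the second regime, where one must track that after subtracting \emph{two} singular terms the leftover right-hand side of $-\Delta_x\widetilde{H}(\cdot,y)$ genuinely has a singularity weaker than $|x-y|^{-n}$ — i.e. verifying that $(p-1)(n-2) < n$ (true since $p < \frac{n}{n-2}$ gives $(p-1)(n-2) < 2 \le n$ for $n \ge 3$... more carefully $(p-1)(n-2) = p(n-2) - (n-2) < n - (n-2) = 2$) so the middle term's residual after the $\alpha_2$-correction, of order $|x-y|^{-[(p-1)(n-2) + \text{(something)}]}$, still integrates past exponent $n$ — and that the error $R$ from Taylor-expanding $t^p$ is controlled when $p < 2$ (the relevant subrange, since $\frac{n}{n-2} \le 2$ for $n \ge 4$). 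Once $-\Delta_x\widetilde H(\cdot,y) \in L^s_{loc}(\Omega)$ with $s>n$ is established in both cases, interior $L^s$-estimates give $\widetilde H(\cdot,y) \in W^{2,s}_{loc}(\Omega) \hookrightarrow C^{1}_{loc}(\Omega)$ by Morrey's embedding, completing the proof.
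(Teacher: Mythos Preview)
Your proposal is correct and follows essentially the same approach as the paper: both compute $-\Delta_x\widetilde{H}(\cdot,y)$ explicitly, use a Taylor expansion of $t\mapsto t^p$ around $c_n|x-y|^{2-n}$ to show that after the subtractions in \eqref{eq-h1-2} the remaining singularity is strictly weaker than $|x-y|^{-1}$ (hence in $L^{n+\eta}_{loc}$), and then invoke elliptic regularity plus Morrey embedding. The paper carries out the same two cases with the same Leibniz-rule computation for the $\alpha_2$-term (using $-\Delta_x H=0$), arriving at the identical bound $|-\Delta_x\widetilde H|\le C|x-y|^{-1+\delta}$ in Case~2.
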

\begin{proof}By a basic regularity theory, it is enough to check that the function $x \in \Omega \rightarrow (-\Delta_x) \widetilde{H}(x,y)$ is contained in $L^{n+\eta}_{loc} (\Omega)$ for some $\eta >0$. For this aim, we begin with the computation 
\[ (-\Delta)|x|^{-\alpha} = \alpha (n-\alpha -2) |x|^{-(\alpha +2)}\quad \textrm{for}\quad x \in \mathbb{R}^n \setminus \{0\}\]
for each $\alpha \neq n-2$ . Using this and \eqref{eq-h1-2} along with \eqref{eq-h1-6} for $x \in \Omega$ we find that
\begin{equation}\label{eq-tilh}
(-\Delta_x)\widetilde{H}(x,y) =\left\{ 
\begin{array}{ll}
\frac{c_n^p}{|x-y|^{(n-2)p}}- G^p (x,y) &\textrm{if}~p \in \left[1, \frac{n-1}{n-2}\right)
\\
\begin{split}
& \frac{c_n^p}{|x-y|^{(n-2)p}} -G^p (x,y) - p H(x,y) c_n^{p-1} \frac{1}{|x-y|^{(n-2)(p-1)}}
\\
&\quad - \frac{2p \nabla_x H(x,y) c_n^{p-1}}{[(n-2)p-2n+2]} (-1)(x-y) |x-y|^{-(n-2)(p-1)}.
\end{split}& \textrm{if}~p \in \left[ \frac{n-1}{n-2}, \frac{n}{n-2}\right).
\end{array}\right.
\end{equation}
In addition, the boundary value of $\widetilde{H}(x, y)$ for $x \in \partial \Omega$ is computed from \eqref{eq-h1-2} as
\begin{equation*}
-\widetilde{H} (x,y)  = \left\{ \begin{array}{ll}
\alpha_1 |x-y|^{-[(n-2)p -2]} &\textrm{if}~p \in \left[1, \frac{n-1}{n-2}\right)
\\
\alpha_1 |x-y|^{-[(n-2)p -2]} -\alpha_2  H(x,y)c_n^{p-1}|x-y|^{-[(n-2)(p-1)-2]} &\textrm{if}~p \in \left[\frac{n-1}{n-2}, \frac{n}{n-2}\right).
\end{array}
\right.
\end{equation*}
Given the above information, we shall now finish the proof for the two cases $p < \frac{n-1}{n-2}$ and $p \geq \frac{n-1}{n-2}$ separately. \

\noindent \textbf{Case 1}. Assume that $p \in \left[1, \frac{n-1}{n-2}\right)$.
\

 Inserting \eqref{eq-g-decom} into \eqref{eq-tilh} we have
\begin{equation}\label{eq-c1-1}
\begin{split}
-\Delta_x \widetilde{H}(x,y) &= \frac{c_n^p}{|x-y|^{(n-2)p}} - \left( \frac{c_n}{|x-y|^{n-2}} - H(x,y) \right)^p 
\\
&\leq \frac{C_y}{|x-y|^{(n-2)(p-1)}},
\end{split}
\end{equation}
where $C_y >0$ is a constant depending on $y$. We can check that $(n-2)(p-1) < 1$ since $p < \frac{n-1}{n-2}$. Thus we can deduce from \eqref{eq-c1-1} that $(-\Delta_x)\widetilde{H}(x,y) \in L_{loc}^{n+\eta} (\Omega)$ for some $\eta >0$.
\

\noindent \textbf{Case 2.} Assume that $p \in \left[ \frac{n-1}{n-2}, \frac{n}{n-2}\right).$

Plugging \eqref{eq-g-decom} into \eqref{eq-tilh} we find
\begin{equation}\label{eq-cl-2}
\begin{split}
\Delta_x \widetilde{H}(x,y)&= \left( \frac{c_n}{|x-y|^{n-2}}- H(x,y)\right)^p - \frac{c_n^p}{|x-y|^{(n-2)p}} + pH(x,y) c_{n}^{p-1} \frac{1}{|x-y|^{(n-2)(p-1)}}
\\
&\quad - \frac{2p \nabla_x H(x,y) c_n^{p-1}}{[(n-2)p-2n+2]} (-1)(x-y) |x-y|^{-(n-2)(p-1)}.
\end{split}
\end{equation}
By applying the Taylor formula of second order we have
\begin{equation}\label{eq-value}
\begin{split}
&\left( \frac{c_n}{|x-y|^{n-2}}- H(x,y)\right)^p - \frac{c_n^p}{|x-y|^{(n-2)p}} + pH(x,y) c_{n}^{p-1} \frac{1}{|x-y|^{(n-2)(p-1)}}
\\
&\qquad=H(x,y)^2 \int_0^1 \frac{1}{2 p (p-1)} \left( \frac{c_n}{|x-y|^{n-2}}- tH(x,y)\right)^{p-2} (1-t)^2 dt,
\end{split}
\end{equation}
For fixed $y \in \Omega$, we claim that the above value is uniformly finite for $x \in \Omega$. For this we remind that
$\sup_{x \in \Omega}H(x,y) < \infty$ and we take a constant $c>0$ small enough. Then one may see that \eqref{eq-value} is bounded by looking at the first formula for the case $|x-y| >c$ and  the second formula for the case $|x-y| \leq c$.
\

Keeping also in mind that $\sup_{x \in \Omega} |\nabla_x H(x,y)|< \infty$ for each fixed $y \in \Omega$, we can find a constant $C_1>0$ such that
\begin{equation*}
\frac{2p \nabla_x H(x,y)c_n^{p-1}}{[(n-2)(p-2)-2]} \frac{(x-y)}{|x-y|^{(n-2)(p-1)}} \leq C_1 |x-y|^{-(n-2)(p-1)+1}\quad \forall x \in \Omega.
\end{equation*}
Since $p< \frac{n}{n-2}$, we have
\[
(n-2)(p-1) -1 = (n-2)p -n -1 = 1-\delta\quad \textrm{for some $\delta >0$}.\]
Therefore we may estimate \eqref{eq-cl-2} as
\begin{equation*}
\left|-\Delta_x \widetilde{H}(x,y)\right| \leq C |x-y|^{-1 +\delta}\quad \forall x \in \Omega,
\end{equation*}
which implies that $\widetilde{H}(\cdot,y) \in L_{loc}^{n + \eta}(\Omega)$ for $\eta >0$ small enough. The lemma is proved.
\end{proof}
\begin{rem}In \cite{G}, the regular part of $\widetilde{G}$ is defined as 
$\widetilde{H}_0 (x,y) = \frac{\alpha_1}{|x-y|^{p(n-2)}} - \widetilde{G}(x,y)$ for any $p \in \left( 1, \frac{n}{n-2}\right)$. However it should be replaced by \eqref{eq-h1-2} for the case $p \geq \frac{n-1}{n-2}$. In fact, it was noted in  \cite{G} that $\widetilde{H}_0 (x,x) = \widetilde{H}(x,x)$, which leads to  $\frac{\partial}{\partial x} [\widetilde{H}_0 (x,x)] = \frac{\partial}{\partial x} [\widetilde{H}(x,x)]$. However, the function $\widetilde{H}_0 (x,y)$ is not symmetric in $x$ and $y$ variables. Hence we may not claim that $\left.\frac{\partial}{\partial x} \widetilde{H}_0 (x,y)\right|_{y=x} = \frac{1}{2} \frac{\partial}{\partial x} \widetilde{H}_0 (x,x)$ holds and the former one $\left.\frac{\partial}{\partial x} \widetilde{H}_0 (x,y)\right|_{y=x}$ is the required value in Theorem \ref{gue}.
\end{rem}
In the following lemma, we show that \eqref{eq-h2} holds true under some assumption not depending on the domains.
\begin{lem}\label{lem-as}\mbox{~}
\begin{enumerate}
\item
If $p \in \left[1, \frac{n-1}{n-2}\right)$, then \textbf{(A1)} is true if the following condition holds
\begin{equation}\label{eq-as-0}
\begin{split}
&\int_{\mathbb{R}^{n}_{+}} \left[ \frac{(1-z_n)}{|z-e_n|^{n}} - \frac{(1+z_n)}{|z+e_n|^n}\right]\left\{ \left( \frac{1}{|z-e_n|^{n-2}} - \frac{1}{|z+e_n|^{n-2}}\right)^p - \frac{1}{|z-e_n|^{(n-2)p}}\right\}dz
\\
& \neq (n-2)\alpha_1\left[\int_{\partial \mathbb{R}^n_{+}} \frac{2}{|(y-e_n)|^{(n-2)(p+1)}} dy - \int_{\mathbb{R}^{n}_{+}} \frac{2n}{|(y-e_n)|^{(n-2)p+n}} dy\right].
\end{split}
\end{equation}
\item If $p \in \left[ \frac{n-1}{n-2}, \frac{n}{n-2}\right]$, then \textbf{(A1)} is true if the following condition holds
\begin{equation}\label{eq-as-1}
\begin{split}
&\int_{\mathbb{R}^{n}_{+}} \left[ \frac{(1-z_n)}{|z-e_n|^{n}} - \frac{(1+z_n)}{|z+e_n|^n}\right]\left\{ \left( \frac{1}{|z-e_n|^{n-2}} - \frac{1}{|z+e_n|^{n-2}}\right)^p - \frac{1}{|z-e_n|^{(n-2)p}} \right.
\\
&\qquad  \left. + \frac{p}{|z-e_n|^{(n-2)(p-1)}|z+e_n|^{n-2}} +\frac{2p(n-2)}{[(n-2)(p-2)-2]} \frac{z+1}{|z+e_n|^{n}} \frac{(z-1)}{|z-e_n|^{(n-2)(p-1)}} \right\}dz
\\
& \neq (n-2)(\alpha_1 -\alpha_2) \left[\int_{\partial \mathbb{R}^n_{+}} \frac{2}{|(y-e_n)|^{(n-2)(p+1)}} dy - \int_{\mathbb{R}^{n}_{+}} \frac{2n}{|(y-e_n)|^{(n-2)p+n}} dy\right].
\end{split}
\end{equation}
\end{enumerate}
\end{lem}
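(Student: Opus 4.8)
The plan is to reduce the statement \textbf{(A1)} to the non-vanishing of a universal integral by a rescaling-and-limit argument, following the scheme announced in the introduction. Fix $x \in \Omega$ with $\mathbf{d}(x) < \delta$, let $n_x \in S^{n-1}$ be the inward-pointing normal direction so that $x - \mathbf{d}(x) n_x \in \partial \Omega$, and introduce the rescaled variables $z = (x' - x)/\mathbf{d}(x)$ after flattening the boundary near the foot point of $x$. Under this rescaling the domain $\Omega$ converges (in $C^1_{loc}$, locally uniformly) to the half-space $\mathbb{R}^n_+$, the point $x$ corresponds to $e_n \in \mathbb{R}^n_+$, and Green's function $G$ together with its reflected expansion from Lemma \ref{lem-h-asym} converges to the explicit half-space Green's function
\[
G_{\mathbb{R}^n_+}(z,w) = \frac{c_n}{|z-w|^{n-2}} - \frac{c_n}{|z-\bar w|^{n-2}},
\]
where $\bar w$ denotes the reflection of $w$ across $\partial \mathbb{R}^n_+$. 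The first task is therefore to make this convergence quantitative: using \eqref{eq-h1-3} and \eqref{eq-h1-5} one controls the error terms $O(\mathbf{d}(y)/|x-y^*|^{n-2})$ uniformly after rescaling, so that $\mathbf{d}(x)^{(n-2)p-2}\widetilde{G}(x,y)$ and, more importantly, $\mathbf{d}(x)^{(n-2)p-1}\partial_{n_x}\widetilde{H}(x,x)$ converge to the corresponding quantities built from $G_{\mathbb{R}^n_+}$.

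Next I would compute the limit of the rescaled normal derivative $\mathbf{d}(x)^{(n-2)p-1}\partial_{n_x}\widetilde{H}(x,x)$ explicitly. The natural tool is a Pohozaev/Rellich type identity (or equivalently an integration-by-parts representation of $\partial_{n_x}\widetilde{H}$) applied to the defining equation \eqref{eq-h1-6} of $\widetilde{G}$: since $-\Delta_x \widetilde{G}(x,y) = G^p(x,y)$ and $\widetilde{H}$ differs from $-\widetilde{G}$ by the explicit singular profiles in \eqref{eq-h1-2} with Laplacian computed in \eqref{eq-tilh}, the normal derivative $\partial_{n_x}\widetilde{H}(x,x)$ can be written as a bulk integral over $\Omega$ of $\bigl[\text{explicit kernel}\bigr]\cdot\bigl[(-\Delta_x)\widetilde{H}\bigr]$ plus boundary contributions. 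Passing to the limit, the bulk integral becomes precisely the left-hand side of \eqref{eq-as-0} (for $p < \frac{n-1}{n-2}$, where no $H$-correction term appears) or of \eqref{eq-as-1} (for $p \ge \frac{n-1}{n-2}$, where the extra terms come from the $\alpha_2 H/|x-y|^{(n-2)p-n}$ correction and its gradient, exactly matching the extra summands in the brace of \eqref{eq-as-1}), while the boundary contributions produce the right-hand side, namely $(n-2)\alpha_1$ (resp.\ $(n-2)(\alpha_1-\alpha_2)$) times the difference of the two explicit surface/volume integrals of $|y-e_n|^{-(n-2)(p+1)}$ and $|y-e_n|^{-((n-2)p+n)}$. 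I should be careful that all these integrals over $\mathbb{R}^n_+$ converge: the decay $|z|^{-(n-2)(p+1)}$ at infinity needs $(n-2)(p+1) > n$, which holds for $p$ near $1$ precisely when $n \ge 5$, explaining the hypothesis $n \ge 5$ and the smallness of $\alpha_n$; near $z = e_n$ the bracketed factor $\frac{1-z_n}{|z-e_n|^n} - \frac{1+z_n}{|z+e_n|^n}$ vanishes to first order, taming the singularity of the profile difference.

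Granting these computations, the limit identity reads: the rescaled $\partial_{n_x}\widetilde{H}(x,x)$ tends to a constant $L(n,p)$ equal to (a positive multiple of) the difference of the two sides of \eqref{eq-as-0} (resp.\ \eqref{eq-as-1}). Hence if that difference is nonzero — which is the hypothesis — then $L(n,p) \ne 0$, and one still must check the \emph{sign}: I would verify (again by an explicit half-space computation, or by the maximum principle applied to $\widetilde{G}$, which is positive and vanishes on $\partial\mathbb{R}^n_+$ so that its inward normal derivative is positive) that $L(n,p) > 0$. Combining $L(n,p) > 0$ with the uniform convergence from the first step yields a constant $C > 0$ and $\delta > 0$ such that $\partial_{n_x}\widetilde{H}(x,x) \ge C\,\mathbf{d}(x)^{1-(n-2)p}$ for all $x$ with $\mathbf{d}(x) < \delta$, which is \eqref{eq-h2}. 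The main obstacle I anticipate is not the limit passage itself but obtaining the error estimates \emph{uniformly} in the rescaling: the kernel $G(x,y)$ for $y$ on the rescaled-unit sphere around $x$ blows up as $\mathbf{d}(x) \to 0$, so one needs the sharp pointwise bounds of Lemma \ref{lem-h-asym} — rather than the mere $o(\cdot)$ of Lemma \ref{lem-Rey} — together with a dominated-convergence argument that controls the tail $|z|$ large and the near-singularity $|z - e_n|$ small simultaneously, and a careful bookkeeping of which terms survive versus which are lower order in the two regimes $p < \frac{n-1}{n-2}$ and $p \ge \frac{n-1}{n-2}$.
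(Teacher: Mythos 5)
Your overall scheme matches the paper's --- rescale by $\kappa = \mathbf{d}(y)$, pass to the half-space limit, and reduce the rescaled normal derivative of $\widetilde{H}$ to a universal integral via the Green/Poisson representation on $\mathbb{R}^n_+$ --- but the ordering of the two main steps differs, and the paper's ordering is the cleaner one. You propose to write the Green representation of $\partial_{n_x}\widetilde{H}(x,x)$ directly in $\Omega$, with $-\Delta_x\widetilde{H}$ from \eqref{eq-tilh} as source, and then pass to the limit integral by integral; as you yourself flag, this forces a uniform-in-$\kappa$ control of the $\Omega$-representation. The paper instead first proves (Lemma \ref{lem-hd}) that $W_\kappa(z) := \kappa^{p(n-2)-2}\widetilde{H}(\kappa z, \kappa e_n)$ converges in $C^1$ near $e_n$ to the explicit half-space function $W_0(z) = \widetilde{H}_0(z,e_n)$, by splitting $W_0 - W_\kappa$ into a Dirichlet-zero piece driven by the source discrepancy and a harmonic piece carrying the boundary discrepancy (each controlled with the sharp bounds of Lemma \ref{lem-h-asym}), and only afterwards (Lemma \ref{lem-h0}) computes $\partial_{x_n}W_0(e_n)$ by the explicit half-space Green/Poisson formula, obtaining precisely the difference of the two sides of \eqref{eq-as-0} (resp.\ \eqref{eq-as-1}); this decoupling avoids any uniform limit passage through $\Omega$-integrals. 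Two smaller points. First, the representation you invoke is the Green representation formula, not a Pohozaev/Rellich identity; your ``integration-by-parts representation'' hedge is the accurate description. Second, and more substantively, your proposed sign verification --- maximum principle giving a positive inward normal derivative of $\widetilde{G}_0$ on $\partial\mathbb{R}^n_+$ --- addresses the wrong quantity: what is needed is the sign of $\partial_{x_n}\widetilde{H}_0(x,e_n)\big|_{x=e_n}$, a derivative of the regular part at the interior point $e_n$, not a boundary normal derivative of $\widetilde{G}_0$. The paper settles the sign (in the proof of Theorem \ref{thm-3} for $p=1$) via the dilation identity $\widetilde{H}_0(te_n,te_n) = t^{4-n}\widetilde{H}_0(e_n,e_n)$ together with positivity of $\widetilde{H}_0(e_n,e_n)$; that is also where the hypothesis $n\ge 5$ really enters, through the factor $(n-2)p-2$ in $\alpha_1$ (which degenerates at $n=4$, $p=1$) and the exponent $4-n$, rather than through the integrability-at-infinity count you suggest.
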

The proof of this lemma will be deferred to Appendix A. There, we will also prove that \eqref{eq-as-0} is true for $p \in [1,1+\alpha_n]$ for a value $\alpha_n >0$ depending only on the dimension, which is exactly the content of Theorem \ref{thm-3}.
\section{Preliminary results on blow up}
In this section we obtain preliminary results for a sequence of the solutions $\{(u_{\ep}, v_{\ep})\}_{\ep >0}$ of type $(ME)$. We take a value $\lambda_{\epsilon}>0$ and a point $x_{\epsilon} \in \Omega$ such that
\begin{equation}\label{eq-max}
\lambda_{\epsilon}=\max_{x \in \Omega} \,\max \{ v_{\epsilon}^{\frac{p+1}{n}}(x), ~u_{\epsilon}^{\frac{q_{\epsilon}+1}{n}} (x) \} = \max \{ v_{\epsilon}^{\frac{p+1}{n}}(x_{\epsilon}),~u_{\epsilon}^{\frac{q_{\epsilon}+1}{n}}(x_{\epsilon})\}.
\end{equation}
We will prove that this value diverges to infinity as $\ep$ goes to zero in the following lemma. The proof will use the property of Green's function that we remind below.
\

For $Q \subset \mathbb{R}^n$ we use the notation $K_{(-\Delta_Q)^{-1}}: Q \times Q \rightarrow \mathbb{R}$ to denote Green's function of the Laplacian $-\Delta$ on $Q$ with the Dirichlet zero boundary condition. We also let $K_{(-\Delta)^{-1}}$ denote Green's function of the Laplacain on $\mathbb{R}^n$, i.e., 
\begin{equation*}
K_{(-\Delta)^{-1}}(x,y) = \frac{c_n}{|x-y|^{n-2}}.
\end{equation*}
Then, it is a classical fact that for any smooth subset $Q \subset \mathbb{R}^n$ with $Q \neq \mathbb{R}^n$, we have
\begin{equation}\label{eq-ineq-gr}
K_{(-\Delta_{Q})^{-1}}(x,y) < K_{(-\Delta)^{-1}}(x,y)\quad \textrm{for all} ~ (x,y) \in Q \times Q.
\end{equation}
\begin{lem}\label{lem-lam} We have $\lim_{\ep \rightarrow 0} \lambda_{\ep} = \infty$.
\end{lem}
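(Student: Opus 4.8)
The plan is to argue by contradiction: suppose that along a subsequence $\lambda_\ep \le C$ for some constant $C$ independent of $\ep$. Then both $u_\ep$ and $v_\ep$ are uniformly bounded in $L^\infty(\Omega)$, say $\|u_\ep\|_{L^\infty},\|v_\ep\|_{L^\infty}\le M$. I would first use the representation via Green's function of $-\Delta$ on $\Omega$, namely
\begin{equation*}
u_\ep(x) = \int_\Omega G(x,y)\, v_\ep(y)^p\, dy,\qquad v_\ep(x) = \int_\Omega G(x,y)\, u_\ep(y)^{q_\ep}\, dy,
\end{equation*}
to bootstrap the uniform bound to $C^1$ (or at least $C^{0,\alpha}$) bounds via elliptic regularity: since $v_\ep^p$ and $u_\ep^{q_\ep}$ are uniformly bounded in $L^\infty(\Omega)$, standard $L^r$ estimates give uniform $W^{2,r}(\Omega)$ bounds for every $r<\infty$, hence uniform $C^{1,\alpha}(\overline\Omega)$ bounds. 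By Arzelà–Ascoli, up to a further subsequence, $(u_\ep,v_\ep)\to(u_0,v_0)$ in $C^1(\overline\Omega)$, and since $q_\ep\to q_0$ where $(p,q_0)$ lies on the critical hyperbola \eqref{eq-cr-hy}, the limit $(u_0,v_0)$ solves \eqref{eq-main} with exponents $(p,q_0)$ on the domain $\Omega$.

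Next I would compute the limiting energy. From the $(ME)$ condition \eqref{eq-energy} we have $S_\ep(\Omega)\to S$, and passing to the limit in $C^1$ one finds that $(u_0,v_0)$ is either trivial or a critical-exponent solution realizing the Sobolev constant $S$ of \eqref{eq-hl} on the bounded domain $\Omega$. The key point is that the Sobolev quotient \eqref{eq-hl} is \emph{never attained} on a bounded domain: this is where the strict inequality \eqref{eq-ineq-gr} between Green's function of $\Omega$ and of $\mathbb{R}^n$ enters. Indeed, if $(u_0,v_0)$ were a nontrivial solution realizing $S$ on $\Omega$, one can compare $u_0(x)=\int_\Omega G(x,y) v_0(y)^p\,dy$ with the corresponding $\mathbb{R}^n$ convolution against $K_{(-\Delta)^{-1}}$, and the strict inequality \eqref{eq-ineq-gr} forces the quotient of $(u_0,v_0)$ on $\Omega$ to be strictly larger than $S$ — unless $(u_0,v_0)\equiv 0$. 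So the limit must be trivial: $u_0\equiv v_0\equiv 0$.

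But $u_0\equiv v_0\equiv 0$ contradicts the normalization. By \eqref{eq-max}, $\lambda_\ep$ is comparable to $\max\{\|v_\ep\|_\infty^{(p+1)/n},\|u_\ep\|_\infty^{(q_\ep+1)/n}\}$, and the $(ME)$ condition \eqref{eq-energy} prevents the energy, hence $\|u_\ep\|_{L^{q_\ep+1}(\Omega)}$, from going to zero: if $u_\ep\to 0$ uniformly, then $\|\Delta u_\ep\|_{L^{(p+1)/p}}=\|v_\ep^p\|_{L^{(p+1)/p}}\to 0$ as well (since $v_\ep$ is controlled by $u_\ep^{q_\ep}$ through $G$), and the quotient $S_\ep(\Omega)$ either degenerates or the solution ceases to exist — in any case one cannot have $S_\ep(\Omega)\to S>0$ with a vanishing solution. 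More carefully, testing the equation against $(u_\ep,v_\ep)$ yields $\int_\Omega v_\ep^{p+1} = \int_\Omega \nabla u_\ep\cdot\nabla v_\ep = \int_\Omega u_\ep^{q_\ep+1}$, and combined with the Sobolev inequality on $\Omega$ (which holds with some constant, a priori larger than $S$) one gets a uniform positive lower bound on $\|u_\ep\|_{L^{q_\ep+1}}$, hence on $\lambda_\ep$; running the argument above then shows $\lambda_\ep$ cannot stay bounded. This is the contradiction, so $\lambda_\ep\to\infty$.

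The main obstacle is the nonexistence/non-attainment step: making rigorous that a nontrivial $C^1$ limit solving the critical system on the bounded domain $\Omega$ with Sobolev quotient equal to $S$ cannot exist. The clean way is exactly via the strict Green's function inequality \eqref{eq-ineq-gr}, which the paper has flagged as the tool (''using the strict inequality of Green's functions''); one writes both components of $(u_0,v_0)$ as Green potentials, replaces $G$ by the larger kernel $K_{(-\Delta)^{-1}}$, and checks that this strictly decreases the Sobolev quotient, contradicting optimality — equivalently, it reproduces the classical fact (Mitidieri's Pohozaev identity, \cite{M1}) that \eqref{eq-main} has no nontrivial solution on a star-shaped domain at the critical hyperbola, upgraded here to the statement that the quotient stays strictly above $S$. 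The remaining steps — elliptic bootstrap, compactness, and the lower bound on $\lambda_\ep$ — are routine.
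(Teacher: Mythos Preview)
Your proposal is correct and follows essentially the same route as the paper: pass to a $C^1$ (indeed $C^2$) limit $(u_0,v_0)$ by elliptic regularity, observe that the $(ME)$ condition \eqref{eq-energy} forces equality in the Sobolev inequality on $\Omega$, and then use the strict Green-kernel comparison \eqref{eq-ineq-gr} (extending $w_0=-\Delta u_0$ by zero to $\mathbb{R}^n$) to contradict the optimality of $S$. Your extra step of ruling out a trivial limit via the lower bound on $\|u_\ep\|_{L^{q_\ep+1}}$ obtained by testing the system against itself is a point the paper leaves implicit when it simply asserts that $(u_0,v_0)$ is nontrivial.
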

\begin{proof}
In order to prove the lemma, we assume the contrary. Then there is a subsequence $\{\ep_{k}\}_{k \in \mathbb{N}}$ with $\lim_{k \rightarrow \infty} \ep_k =0$ we have $\sup_{k \in \mathbb{N}} \lambda_{\epsilon}< \infty$. This implies that the solutions $\{(u_{\epsilon_k}, v_{\epsilon_k})\}_{k \in \mathbb{N}}$ are uniformly bounded in $C^{2,\alpha}(\Omega) \times C^{2,\alpha}(\Omega)$ for some $\alpha \in (0,1)$ by the standard regularity theory. Hence $(u_{\epsilon_k}, v_{\epsilon_k})$ converges in $C^2 (\Omega) \times C^2 (\Omega)$ to a nontrivial solution $(u_0, v_0)$ of the equation
\begin{equation}\label{eq-lam-2}
\left\{ \begin{array}{ll} -\Delta u_0 = v_0^{p}&\textrm{in}~\Omega,
\\
-\Delta v_0 = u_0^{q}&\textrm{in}~\Omega,
\\
u_0 = v_0 = 0&\textrm{on}~\partial \Omega.
\end{array}
\right.
\end{equation}
On the other hand, by taking the limit $k \rightarrow \infty$ in \eqref{eq-energy} we get
\begin{equation*}
\| u_0 \|_{L^{q+1}(\Omega)} = S^{-\frac{p}{p+1}} \| \Delta u_0\|_{L^{\frac{p+1}{p}}(\Omega)}.
\end{equation*}
Let us set $w_0 : \Omega \rightarrow \overline{\mathbb{R}_{+}}$ by $w_0 (x) = (-\Delta_{\Omega}) u_0 (x)$ for $x \in \Omega$. Then $u_0 (x) = (-\Delta_{\Omega})^{-1} w_0 (x)$ for $x \in \Omega$ and so we have
\begin{equation}\label{eq-lam-1} \left\|(-\Delta_{\Omega})^{-1} w_0 \right\|_{L^{q+1}(\Omega)} = S^{-\frac{p}{p+1}} \left\|w_0\right\|_{L^{\frac{p+1}{p}}(\Omega)}.
\end{equation}
We extend the function $w_0$ to set $\widetilde{w}_0 : \mathbb{R}^n \rightarrow \overline{\mathbb{R}_{+}}$ by 
\begin{equation*}
\widetilde{w}_0 (x) = \left\{ \begin{array}{ll} w_0 (x)&\quad \textrm{for}~x \in \Omega,
\\
0&\quad \textrm{for}~ x \notin \Omega.
\end{array}
\right.
\end{equation*}
Then, using the inequality \eqref{eq-ineq-gr} and \eqref{eq-lam-1} we obtain the following estimate
\begin{equation*}
\begin{split}
S^{-\frac{p}{p+1}} \| \widetilde{w}_0\|_{L^{\frac{p+1}{p}}(\mathbb{R}^n)}&= S^{-\frac{p}{p+1}} \| w_0\|_{L^{\frac{p+1}{p}}(\Omega)} 
\\
&= \| (-\Delta_{\Omega})^{-1} w_0\|_{L^{q+1}(\Omega)} 
\\
& < \| (-\Delta_{\Omega})^{-1} \widetilde{w_0}\|_{L^{q+1}(\Omega)} <  \| (-\Delta)^{-1} \widetilde{w}_0\|_{L^{q+1}(\mathbb{R}^n)}.
\end{split}
\end{equation*}
However, this contradicts to the optimality of the constant $S^{-\frac{p}{p+1}}$ of the inequality \eqref{eq-hl}. Therefore it should hold that $\lim_{\epsilon \rightarrow 0} \lambda_{\epsilon} =\infty$. The lemma is proved.
\end{proof}
\begin{rem}
 In \cite{G} the author proved $\lambda_{\ep}$ as $\ep \rightarrow 0$ using the convexity assumption of $\Omega$ since a convex domain is a star-shaped domain for which the Pohozaev type identity of \cite{M1} can be applied to yield that \eqref{eq-lam-2} has no nontrivial solution. However, the blowing up of the sequence of solutions with the minimal energy type condition \eqref{eq-energy} can be deduced only using the condition \eqref{eq-energy} without the convexity assumption as the proof of the above lemma shows.
\end{rem}
For each $\ep >0$ we set $\Omega_{\epsilon}:= \lambda_{\epsilon}(\Omega-x_{\epsilon})$ and normalize the solutions as
\begin{equation*}
\widetilde{u}_{\epsilon}(x):= \lambda_{\epsilon}^{-\frac{n}{q_{\epsilon}+1}} u_{\epsilon}(\lambda_{\epsilon}^{-1} x + x_{\epsilon}),
\quad \textrm{and}\quad \widetilde{v}_{\epsilon}(x):= \lambda_{\epsilon}^{-\frac{n}{p+1}} v_{\epsilon}(\lambda_{\epsilon}^{-1} x + x_{\epsilon}),
\quad \textrm{for}~x \in \Omega_{\epsilon}.
\end{equation*}
Then it holds that
\begin{equation}\label{eq-ext-sol}
\left\{ \begin{array}{ll}
-\Delta \widetilde{u}_{\epsilon} = \widetilde{v}_{\epsilon}^{p}&\textrm{in}~\Omega_{\epsilon},
\\
-\Delta \widetilde{v}_{\epsilon} = \widetilde{u}_{\epsilon}^{q_{\epsilon}}&\textrm{in}~\Omega_{\epsilon},
\\
\widetilde{u}_{\epsilon} = \widetilde{v}_{\epsilon}=0&\textrm{on}~\partial\Omega_{\epsilon},
\end{array}
\right.
\end{equation}
and
\begin{equation*}
\max_{x \in \Omega_{\epsilon}} \{ \widetilde{u}_{\epsilon}(x),~\widetilde{v}_{\epsilon}(x)\} = 1 = \max \{ \widetilde{u}_{\epsilon}(0),~\widetilde{v}_{\epsilon}(0)\}.
\end{equation*}
In the next lemma, we obtain an estimate for the distance between the maximum point of the solutions and the boundary $\partial \Omega$.
\begin{lem}\label{lem-bb}
We have $\lim_{\epsilon \rightarrow 0} \lambda_{\epsilon} \textrm{dist}(x_{\epsilon}, \partial \Omega) = \infty$.
\end{lem}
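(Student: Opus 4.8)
The plan is to argue by contradiction: suppose that, along a subsequence, $\lambda_\ep\, \mathbf d(x_\ep)$ stays bounded, i.e.\ $\lambda_\ep \,\mathrm{dist}(x_\ep,\partial\Omega)\le M$ for all $\ep$. Since $\lambda_\ep\to\infty$ by Lemma \ref{lem-lam}, this forces $\mathbf d(x_\ep)\to 0$, so $x_\ep$ approaches the boundary while the rescaling factor is exactly tuned so that the boundary $\partial\Omega_\ep$ sits at bounded distance from the origin in the rescaled picture. The rescaled domains $\Omega_\ep = \lambda_\ep(\Omega - x_\ep)$ then, after a rotation putting $n_{x_\ep}$ in a fixed direction, converge locally (in, say, the Hausdorff sense on compact sets, using smoothness of $\partial\Omega$) to a half-space $\mathbb H = \{x : x\cdot e < T\}$ for some $T\in[0,M]$ — a proper subset of $\mathbb R^n$, which is the crucial point.

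The next step is to pass to the limit in the rescaled system \eqref{eq-ext-sol} together with the normalization $\max\{\widetilde u_\ep(0),\widetilde v_\ep(0)\}=1$. I would first invoke the uniform global bound on $(\widetilde u_\ep,\widetilde v_\ep)$ coming from Guerra's Lemma (referred to as Lemma \ref{lem-gu} in the excerpt), valid because $p\ge 1$; this gives $\|\widetilde u_\ep\|_\infty,\|\widetilde v_\ep\|_\infty \le C$ uniformly, hence by elliptic regularity $C^{2,\alpha}_{loc}$ bounds on the rescaled domains. Passing to a subsequence, $(\widetilde u_\ep,\widetilde v_\ep)\to(\widetilde u_0,\widetilde v_0)$ in $C^2_{loc}(\overline{\mathbb H})$, where $(\widetilde u_0,\widetilde v_0)$ solves $-\Delta\widetilde u_0 = \widetilde v_0^{\,p}$, $-\Delta\widetilde v_0 = \widetilde u_0^{\,q_0}$ in $\mathbb H$ with zero Dirichlet data on $\partial\mathbb H$ (here $q_0 = \lim q_\ep$ is the critical exponent on the hyperbola; one must check $q_\ep\to q_0 < \infty$, which holds since $p\ge 1 > (n-2)/n\cdot$ something — more precisely since $p\le (n+2)/(n-2)$ the conjugate $q_\ep$ stays bounded). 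The normalization is \emph{not} automatically preserved at $0$ if $0$ could drift to the boundary, but $0\in\mathbb H$ with $\mathrm{dist}(0,\partial\mathbb H)=T$; the subtlety is that if $T=0$ the limit is trivial. I would rule out $T=0$ separately: if $\lambda_\ep\mathbf d(x_\ep)\to 0$ the maximum value $1$ is attained arbitrarily close to $\partial\Omega_\ep$, which contradicts the boundary Harnack/gradient bound forcing the solution to be small near the flat boundary at scale $1$ — alternatively, one reduces to the case $T\in(0,M]$ by working at the maximum point directly.

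The heart of the contradiction: the limit $(\widetilde u_0,\widetilde v_0)$ is a nonnegative nontrivial bounded solution of the Lane--Emden system at the \emph{critical} index on the half-space $\mathbb H$, vanishing on $\partial\mathbb H$. But this is impossible: by a standard Kelvin transform / moving-plane argument on the half-space (the half-space \emph{is} convex, so the moving plane method applies here even though it failed on $\Omega$), or more directly by a Pohozaev identity on the half-space exploiting star-shapedness of $\mathbb H$ with respect to points far in the $-e$ direction — exactly the Pohozaev obstruction of Mitidieri \cite{M1} applied in the limit — there is no nontrivial solution. Equivalently, one can recycle the energy argument of Lemma \ref{lem-lam}: the strict Green's function inequality \eqref{eq-ineq-gr} on $\mathbb H$ versus $\mathbb R^n$ again contradicts optimality of the Sobolev constant $S$ in \eqref{eq-hl}, since passing to the limit in \eqref{eq-energy} (with the scaling-invariant energy) would give a half-space extremal, which does not exist. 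This contradiction shows the subsequence with $\lambda_\ep\mathbf d(x_\ep)$ bounded cannot exist, hence $\lambda_\ep\,\mathrm{dist}(x_\ep,\partial\Omega)\to\infty$.

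The main obstacle I expect is the compactness/convergence bookkeeping near the boundary: making rigorous that the rescaled domains converge to a genuine half-space (not, e.g., all of $\mathbb R^n$ if $T\to\infty$, which would actually be fine, nor a degenerate slab), that the limiting solution is genuinely nontrivial (i.e.\ the normalization survives the limit — this needs either $0$ or the maximum point to stay at bounded distance from the limiting boundary with a positive lower bound, handled by the boundary gradient estimate), and that $q_\ep$ converges to a finite critical exponent so the limit equation is the expected one. Once the limit object is correctly identified as a nontrivial bounded Lane--Emden solution on a half-space at critical or subcritical index, its nonexistence is classical and provides the contradiction cleanly.
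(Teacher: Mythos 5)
Your proposal is correct and, among the three routes you sketch for the final contradiction, the last one---recycling the argument of Lemma~\ref{lem-lam} via the strict Green's-function inequality \eqref{eq-ineq-gr} on $\mathcal H$ versus $\mathbb R^n$ to violate optimality of $S$ in \eqref{eq-hl}---is exactly what the paper does; the Pohozaev and moving-plane alternatives you mention are plausible but not used (and the Pohozaev route would need extra care since $\mathcal H$ is unbounded, so the integration-by-parts identity requires decay of the limit solution to control terms at infinity). One small refinement: the paper simply writes that the subsequential limit is some $l\in(0,\infty)$ without explaining why $l=0$ is excluded, whereas you correctly flag the $T=0$ degeneracy; the cleanest way to dispose of it is that $C^{2}$-up-to-the-boundary convergence (available from Schauder estimates on the smooth rescaled domains) together with the Dirichlet condition would force $\max\{\overline U(0),\overline V(0)\}=0$, contradicting the preserved normalization $\max\{\widetilde u_\ep(0),\widetilde v_\ep(0)\}=1$, which is a shade simpler than the boundary Harnack argument you suggest.
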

\begin{proof}
The proof is similar to that of Lemma \ref{lem-lam}. As there, we assume the contrary. Then, up to a subsequence, we have $\lim_{\epsilon \rightarrow 0} \lambda_{\epsilon} \textrm{dist} (x_{\epsilon}, \partial \Omega) = l$ for some $l \in (0, \infty)$. This then implies that the extended domain $\Omega_{\epsilon}$ converges to a half space $\mathcal{H} = \left\{ x \in \mathbb{R}^n : \sum_{i=1}^{n} a_i x_i >0\right\}$ for some $(a_1,\cdots, a_n) \in \mathbb{R}^n \setminus \{0\}$ as $\ep \rightarrow 0$. Also, the normalized functions $(\widetilde{u}_{\epsilon}, \widetilde{v}_{\epsilon})$ converge to a nontrivial solution $(\overline{U}, \overline{V})$ of the problem
\begin{equation*}
\left\{ \begin{array}{ll} -\Delta \overline{U} = \overline{V}^{p}&\textrm{in}~\mathcal{H},
\\
-\Delta \overline{V} = \overline{U}^{q}&\textrm{in}~\mathcal{H},
\\
\overline{U}=\overline{V}=0&\textrm{on}~\partial \mathcal{H}.
\end{array}
\right.
\end{equation*}
and we know that $K_{(-\Delta_{\mathcal{H}})^{-1}} (x,y) < K_{(-\Delta)^{-1}}(x,y)$ from \eqref{eq-ineq-gr}. Then
we can obtain a contradiction as in the proof of Lemma \ref{lem-lam}. Thus the result of the lemma is true.
\end{proof}
We set $d_{\epsilon} := \frac{1}{4}\textrm{dist}(x_{\epsilon}, \partial \Omega)$ and $N_{\ep} = d_{\ep} \lambda_{\ep}$. Then we see from Lemma \ref{lem-bb} that
\begin{equation}\label{eq-de}
d_{\epsilon} = \frac{N_{\epsilon}}{\lambda_{\epsilon}}\quad \textrm{and}\quad \lim_{\ep \rightarrow 0} N_{\epsilon} = \infty.
\end{equation}
Remarkably, the fact that $N_{\ep} \rightarrow \infty$ as $\ep \rightarrow 0$ plays an important role in the proofs of Theorem \ref{thm-1} and Theorem \ref{thm-2}. 
\

Next we recall from \cite[Lemma 2.2]{G} the following result.
\begin{lem}[\cite{G}] There is a constant $C>0$ independent of $\epsilon>0$ such that $\lambda_{\epsilon}^{\epsilon} \leq C$ for all $\ep >0$.
\end{lem}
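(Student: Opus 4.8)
The statement to prove is the lemma attributed to \cite[Lemma 2.2]{G}: there exists $C>0$ independent of $\epsilon$ such that $\lambda_\epsilon^\epsilon \le C$ for all $\epsilon>0$.

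\medskip

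\noindent\textbf{Proof plan.} The plan is to extract the bound from the minimal energy condition \eqref{eq-energy} together with the scaling relations between the original and normalized solutions. First I would write out what \eqref{eq-energy} says about the size of $\|u_\epsilon\|_{L^{q_\epsilon+1}(\Omega)}$ and $\int_\Omega |\Delta u_\epsilon|^{\frac{p+1}{p}}\,dx$: since $S_\epsilon(\Omega)\to S$, these two quantities are comparable up to a factor bounded above and below, and by the equation $-\Delta u_\epsilon = v_\epsilon^p$ together with integration by parts and H\"older one recovers that the full system energy $E_\epsilon := \int_\Omega \nabla u_\epsilon\cdot\nabla v_\epsilon\,dx = \int_\Omega v_\epsilon^{p+1}\,dx = \int_\Omega u_\epsilon^{q_\epsilon+1}\,dx$ stays bounded above and below by positive constants uniform in $\epsilon$. (That these three integrals coincide follows from testing the first equation with $v_\epsilon$ and the second with $u_\epsilon$.)

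\medskip

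\noindent The second step is to relate $\lambda_\epsilon$ to these energies via the normalization. By definition \eqref{eq-max}, $\lambda_\epsilon = \max\{v_\epsilon^{(p+1)/n}(x_\epsilon), u_\epsilon^{(q_\epsilon+1)/n}(x_\epsilon)\}$, so in particular $\|v_\epsilon\|_{L^\infty}^{p+1} \le \lambda_\epsilon^n$ and $\|u_\epsilon\|_{L^\infty}^{q_\epsilon+1}\le \lambda_\epsilon^n$. The standard blow-up normalization gives $\int_{\Omega_\epsilon} \widetilde v_\epsilon^{p+1}\,dx = \lambda_\epsilon^{-n\left(\frac{p+1}{n}\right)+n - (\text{Jacobian})}\int_\Omega v_\epsilon^{p+1}\,dx$; carrying out the change of variables $x\mapsto \lambda_\epsilon^{-1}x + x_\epsilon$ carefully, one checks that $\int_{\Omega_\epsilon}\widetilde v_\epsilon^{p+1}\,dx$ and $\int_{\Omega_\epsilon}\widetilde u_\epsilon^{q_\epsilon+1}\,dx$ equal $E_\epsilon$ exactly when $(p,q_\epsilon)$ lies on the critical hyperbola, and differ from $E_\epsilon$ only by a factor $\lambda_\epsilon^{c\epsilon}$ for an explicit constant $c$ when $(p,q_\epsilon)$ satisfies \eqref{eq-pq-e} — this is precisely where the exponent $\epsilon$ enters. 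Since $\widetilde u_\epsilon,\widetilde v_\epsilon \le 1$ and the rescaled domains $\Omega_\epsilon$ exhaust $\mathbb R^n$ (by Lemma \ref{lem-bb}), elliptic estimates on a fixed ball $B(0,1)\subset \Omega_\epsilon$ give a uniform \emph{lower} bound $\int_{B(0,1)}\widetilde v_\epsilon^{p+1}\,dx \ge c_0 > 0$ (using that $\max\{\widetilde u_\epsilon(0),\widetilde v_\epsilon(0)\}=1$ and the equation prevents both from vanishing). Combining this lower bound with the upper bound $E_\epsilon \le C$ and the relation $\int_{\Omega_\epsilon}\widetilde v_\epsilon^{p+1}\,dx \approx \lambda_\epsilon^{c\epsilon} E_\epsilon$ yields $\lambda_\epsilon^{c\epsilon}\le C/c_0$, hence $\lambda_\epsilon^\epsilon \le (C/c_0)^{1/c}$.

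\medskip

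\noindent\textbf{Main obstacle.} The delicate point is bookkeeping the exponents of $\lambda_\epsilon$ produced by the change of variables when $(p,q_\epsilon)$ is \emph{off} the critical hyperbola by $\epsilon$: one must track how $\frac{1}{p+1}+\frac{1}{q_\epsilon+1} = \frac{n-2}{n}+\epsilon$ translates into the Jacobian-versus-rescaling mismatch, and verify the surplus power is exactly (a positive multiple of) $\epsilon$ so that a uniform bound on the rescaled energy upgrades to a uniform bound on $\lambda_\epsilon^\epsilon$. A secondary technical point is the uniform lower bound $\int_{B(0,1)}\widetilde v_\epsilon^{p+1}\,dx\ge c_0$: this requires observing that if $\widetilde v_\epsilon(0)$ were small then $\widetilde u_\epsilon(0)=1$, and then Harnack/interior estimates applied to $-\Delta\widetilde v_\epsilon = \widetilde u_\epsilon^{q_\epsilon}$ force $\widetilde v_\epsilon$ to be bounded below on a fixed neighborhood of $0$; one should also use $\lambda_\epsilon^\epsilon$ appearing only mildly so that $q_\epsilon$ stays in a compact range (which follows from \eqref{eq-pq-e} and $p\in[1,\frac{n+2}{n-2}]$). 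Everything else is routine once these two estimates are in hand.
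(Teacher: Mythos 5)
Your argument hinges on the claim that $\int_{\Omega_\epsilon}\widetilde v_\epsilon^{p+1}\,dx$ and $\int_{\Omega_\epsilon}\widetilde u_\epsilon^{q_\epsilon+1}\,dx$ pick up a factor $\lambda_\epsilon^{c\epsilon}$ relative to $E_\epsilon:=\int_\Omega v_\epsilon^{p+1}\,dx=\int_\Omega u_\epsilon^{q_\epsilon+1}\,dx$ because $(p,q_\epsilon)$ lies off the critical hyperbola. This is false. With the paper's normalization $\widetilde v_\epsilon(x)=\lambda_\epsilon^{-n/(p+1)}v_\epsilon(\lambda_\epsilon^{-1}x+x_\epsilon)$, the change of variable $y=\lambda_\epsilon^{-1}x+x_\epsilon$ gives
\begin{equation*}
\int_{\Omega_\epsilon}\widetilde v_\epsilon^{p+1}\,dx \;=\; \lambda_\epsilon^{-n}\cdot\lambda_\epsilon^{n}\int_\Omega v_\epsilon^{p+1}\,dy \;=\; \int_\Omega v_\epsilon^{p+1}\,dy,
\end{equation*}
identically---the prefactor from $\widetilde v_\epsilon^{p+1}=\lambda_\epsilon^{-n}v_\epsilon^{p+1}$ cancels the Jacobian exactly---and likewise $\int_{\Omega_\epsilon}\widetilde u_\epsilon^{q_\epsilon+1}=\int_\Omega u_\epsilon^{q_\epsilon+1}$. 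These equalities hold for every pair $(p,q_\epsilon)$, whether critical or not, so the $\lambda_\epsilon^{c\epsilon}$ you wish to extract never appears and the final inequality $\lambda_\epsilon^{c\epsilon}\le C/c_0$ is unsupported. The $\epsilon$-power of $\lambda_\epsilon$ only arises from a \emph{mismatched} exponent: using $\frac{1}{q_\epsilon+1}-\frac{1}{q+1}=\epsilon$ from \eqref{eq-pq-e}, one finds $\int_\Omega u_\epsilon^{q+1}\,dx=\lambda_\epsilon^{n(q+1)\epsilon}\int_{\Omega_\epsilon}\widetilde u_\epsilon^{q+1}\,dx$ with the \emph{critical} exponent $q$, or equivalently, the rescaled equations carry hidden prefactors: $-\Delta\widetilde u_\epsilon=\lambda_\epsilon^{-n\epsilon}\widetilde v_\epsilon^{p}$ and $-\Delta\widetilde v_\epsilon=\lambda_\epsilon^{-n\epsilon}\widetilde u_\epsilon^{q_\epsilon}$.

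The correct route therefore passes through $\|u_\epsilon\|_{L^{q+1}}$ rather than $\|u_\epsilon\|_{L^{q_\epsilon+1}}$. Your first step is sound: \eqref{eq-energy} and $pq_\epsilon>1$ give uniform two-sided bounds on $E_\epsilon$. Then the Green's-function comparison \eqref{eq-ineq-gr} and the Sobolev inequality \eqref{eq-hl} give $\|u_\epsilon\|_{L^{q+1}(\Omega)}\le S^{-p/(p+1)}\|v_\epsilon^p\|_{L^{(p+1)/p}(\Omega)}=S^{-p/(p+1)}E_\epsilon^{p/(p+1)}\le C$; pairing this with $\int_{\Omega_\epsilon}\widetilde u_\epsilon^{\,q+1}\ge c_0>0$ (when $\widetilde u_\epsilon(0)=1$, via interior estimates) and the identity above yields $\lambda_\epsilon^{n(q+1)\epsilon}\le C/c_0$, hence $\lambda_\epsilon^\epsilon\le C'$; the case $\widetilde v_\epsilon(0)=1$ needs the analogous dual Sobolev bound for $v_\epsilon$. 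Finally, note a related circularity in your proposal: the claimed lower bound on $\int_{B(0,1)}\widetilde v_\epsilon^{p+1}$ via Harnack for ``$-\Delta\widetilde v_\epsilon=\widetilde u_\epsilon^{q_\epsilon}$'' silently drops the prefactor $\lambda_\epsilon^{-n\epsilon}$ from the true rescaled equation, and thus presupposes the very bound being proved; the uniform lower bound must come from whichever component is normalized to equal $1$ at the origin.
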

By this result we have 
\[ \lambda_{\ep}^{\frac{n}{q_{\ep}+1}} \leq C \lambda_{\ep}^{\frac{n}{q+1}}.\]
We shall use this inequality in many places of the proofs of our main results. 
\

By Lemma \ref{lem-bb} the domain $\Omega_{\epsilon}$ converges to $\mathbb{R}^n$ as $\epsilon$ goes to zero, and so the rescaled solution $(\widetilde{u}_{\epsilon}, \widetilde{v}_{\epsilon})$ converges in $C_{loc}^2 (\mathbb{R}^n)$ to a solution $(U, V)$ of the problem
\begin{equation}\label{eq-entire}
\left\{ \begin{array}{ll} -\Delta U = V^p&\quad \textrm{in}~\mathbb{R}^n,
\\
-\Delta V = U^q&\quad \textrm{in}~\mathbb{R}^n,
\\
U(y) >0,~ V(y) >0&\quad y \in \mathbb{R}^n,
\\
U(0)=1= \max_{x \in \mathbb{R}^n}U(x),~ U \rightarrow 0,~V \rightarrow 0&\quad\textrm{as}~|y| \rightarrow \infty.
\end{array}
\right.
\end{equation}
We recall the result of Chen-Li-Ou [CLO] that $U$ and $V$ are radially symmetric if $U \in L^{q+1}(\mathbb{R}^N)$, $V \in L^{p+1}(\mathbb{R}^N)$ and $p \geq 1$. In addition, Hulshof and Van der Vorst \cite{HV2} obtained the asymptotic behavior as follows. 
\begin{equation}\label{eq-decay}
\lim_{r \rightarrow \infty} r^{N-2} V(r) = a\quad \textrm{and}\quad \left\{ \begin{array}{ll} \lim_{r \rightarrow \infty}r^{N-2} U(r) = b&\quad \textrm{if}~ p > \frac{N}{N-2},
\\
\mbox{~}
\\
\lim_{r\rightarrow \infty} \frac{r^{N-2}}{\log r}U(r) =b&\quad \textrm{if}~p = \frac{N}{N-2},
\\
\mbox{~}
\\
\lim_{r \rightarrow \infty} r^{p(N-2)-2} U(r) =b&\quad \textrm{if}~ \frac{2}{N-2} < p < \frac{N}{N-2}.
\end{array}
\right.
\end{equation}
In the following sections, these sharp decaying rates will be used frequently in a combination with the following result.
\begin{lem}[\cite{G}]\label{lem-gu} There exists a constant $C>0$ such that
\begin{equation}\label{eq-bound}
\widetilde{u}_{\epsilon}(x) \leq C U(x) \quad \textrm{and}\quad \widetilde{v}_{\epsilon}(x) \leq CV(x),\quad \forall x \in \Omega_{\epsilon}\quad \forall \epsilon >0.
\end{equation}
\end{lem}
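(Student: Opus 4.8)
The plan is to follow the global upper estimate of Guerra \cite[Lemma 2.3]{G}, whose argument I now outline. Writing $K_\ep$ for Green's function of $-\Delta$ on $\Omega_\ep$ with zero boundary data, the rescaled equations \eqref{eq-ext-sol} yield
\[
\widetilde u_\ep(x)=\int_{\Omega_\ep}K_\ep(x,y)\,\widetilde v_\ep(y)^p\,dy,\qquad \widetilde v_\ep(x)=\int_{\Omega_\ep}K_\ep(x,y)\,\widetilde u_\ep(y)^{q_\ep}\,dy,
\]
and by \eqref{eq-ineq-gr} we may replace $K_\ep(x,y)$ by $c_n|x-y|^{2-n}$ in both integrals to obtain upper bounds. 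Since $U$ and $V$ are continuous, positive and carry the sharp decay of \eqref{eq-decay}, they are comparable (from above and below) to the corresponding powers of $1+|x|$; hence \eqref{eq-bound} is equivalent to the $\ep$-uniform decay bounds $\widetilde v_\ep(x)\le C(1+|x|)^{2-n}$ on $\Omega_\ep$, together with, for $\widetilde u_\ep$, the rate $(1+|x|)^{2-n}$ if $p>\frac{n}{n-2}$, $(1+|x|)^{2-n}\log(2+|x|)$ if $p=\frac{n}{n-2}$, and $(1+|x|)^{2-(n-2)p}$ if $p<\frac{n}{n-2}$.

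The first and main step is to prove that $\widetilde u_\ep(x),\widetilde v_\ep(x)\to0$ as $|x|\to\infty$, uniformly in $\ep$. I would argue by contradiction: if $\widetilde v_\ep(x_\ep)\ge\delta_0>0$ (or similarly $\widetilde u_\ep(x_\ep)\ge\delta_0$) for points $x_\ep\in\Omega_\ep$ with $|x_\ep|\to\infty$, then translating by $x_\ep$ and passing to the limit produces a second nontrivial solution of either the entire system \eqref{eq-entire} or the corresponding half-space problem (the latter excluded exactly as in Lemma \ref{lem-bb}); combined with the profile already present near the origin, this forces the rescaled energies $\int_{\Omega_\ep}\widetilde v_\ep^{p+1}=\int_{\Omega_\ep}\widetilde u_\ep^{q_\ep+1}$ to exceed, in the limit, the energy $\int_{\mathbb{R}^n}V^{p+1}=\int_{\mathbb{R}^n}U^{q+1}$ of a single extremal profile, whereas the type $(ME)$ normalization \eqref{eq-energy} forces them to converge exactly to that value — a contradiction. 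This gives the claimed uniform vanishing at infinity.

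With uniform vanishing at hand, a finite bootstrap in the representation formulas raises the decay exponents: inserting a bound $\widetilde v_\ep(y)\le C(1+|y|)^{-\gamma}$ and splitting the convolution into the regions $\{|y|\le|x|/2\}$ (kernel $\le(|x|/2)^{2-n}$, bounded integral), $\{|x-y|\le1\}$ (bounded kernel integral times the small value $\sup_{B(x,1)}\widetilde v_\ep^p$ furnished by the previous step) and the remaining region (kernel and integrand both decaying) improves the exponent of $\widetilde u_\ep$, and symmetrically for $\widetilde v_\ep$; after finitely many iterations the exponents stabilize at the sharp rates above, uniformly in $\ep$. Here $p\ge1$ is used through the elementary bound $|a^p-b^p|\le p|a-b|$ on $[0,1]$ that keeps the iteration under control and, more fundamentally, through the radial symmetry of $(U,V)$ and the sharp asymptotics \eqref{eq-decay} on which the whole scheme rests; the estimate $\lambda_\ep^\ep\le C$ is what lets one absorb the discrepancy $q-q_\ep=O(\ep)$ when comparing $\int c_n|x-y|^{2-n}\widetilde u_\ep(y)^{q_\ep}\,dy$ against $V(x)=\int c_n|x-y|^{2-n}U(y)^q\,dy$, since $|y|\le C\lambda_\ep$ on $\Omega_\ep$ makes the extra factor $\lambda_\ep^{O(\ep)}\le C$.

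The hard part throughout is the uniformity in $\ep$: every estimate must survive up to the moving boundary $\partial\Omega_\ep$, the bootstrap constants must not degenerate as the subcritical exponent $q_\ep$ approaches the critical $q$, and the three ranges of $p$ — in particular the borderline case $p=\frac{n}{n-2}$ with its logarithmic correction — must be handled simultaneously. I expect the contradiction argument ruling out a second bubble (or a bubble escaping to the boundary) to be the most delicate ingredient, since it is exactly what converts the energy information in \eqref{eq-energy} into a pointwise decay that can be bootstrapped.
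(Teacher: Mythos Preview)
Your outline takes a different route from the one the paper cites. The paper does not reprove the lemma; it simply records that Guerra's argument proceeds ``through a combination of the Kelvin transform and a Moser iteration argument.'' In that scheme one applies the Kelvin transform $\hat w(x)=|x|^{2-n}\,\widetilde w(x/|x|^2)$ to $(\widetilde u_\ep,\widetilde v_\ep)$; the desired decay \eqref{eq-bound} then becomes a uniform $L^\infty$ bound on the transformed pair near the origin, and a Moser iteration on the transformed system (this is where $p\ge1$ is used, to keep the iteration closed) supplies that bound directly from the uniform energy control.

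By contrast, you stay in the original variables: first rule out a second concentration profile by an energy contradiction exploiting the type-$(ME)$ condition, obtaining uniform vanishing at infinity, and then bootstrap decay through the Riesz-potential representation. This is a legitimate alternative, and it is closer in spirit to the pointwise Green's-function computations used elsewhere in the paper. The point that deserves more care is the passage from \emph{qualitative} uniform vanishing to a \emph{quantitative} initial rate $\widetilde v_\ep(y)\le C(1+|y|)^{-\gamma_0}$ with some $\gamma_0>0$ uniform in $\ep$: the three-region splitting you describe improves an existing exponent but does not by itself manufacture one from mere smallness, and your sketch starts the iteration by ``inserting a bound $\widetilde v_\ep(y)\le C(1+|y|)^{-\gamma}$'' without saying where the first $\gamma>0$ comes from. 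In Guerra's approach this seeding step is precisely what the Kelvin transform plus Moser iteration delivers; in yours it would have to come from the uniform $L^{p+1}\times L^{q_\ep+1}$ energy bound together with a first pass of Moser/De Giorgi type before the integral bootstrap can take over. Once that initial rate is secured, your finite iteration does stabilize at the sharp profiles in \eqref{eq-decay}, with the three cases $p\gtrless\frac{n}{n-2}$ handled exactly as you indicate.
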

\begin{proof}
The proof is obtained through a combination of the Kelvin transform and a Moser iteration argument. We refer to \cite{G} for the detail.
\end{proof}
Let us define the following constants
\begin{equation}\label{eq-AUV}
A_V = \int_{\mathbb{R}^n} V^{p}(y) dy,\qquad A_U = \int_{\mathbb{R}^n} U^{q}(y) dy.
\end{equation}
We end this section with a local version of the Pohozaev type identity for the problem \eqref{eq-main}.
\begin{lem}\label{lem-poho}
Let $1 \leq j \leq n.$ Suppose that $(u,v) \in C^2 (\Omega) \times C^2 (\Omega)$ is a solution of \eqref{eq-main}. Then, for any open smooth subset $D \subset \Omega$, we have the following identity. 
\begin{equation}\label{eq-poho-1}
\begin{split}
&-\int_{\partial D} \left( \frac{\partial u}{\partial \nu} \frac{\partial v}{\partial x_j} + \frac{\partial v}{\partial \nu} \frac{\partial u}{\partial x_j} \right) d S_x + \int_{\partial D}(\nabla u \cdot \nabla v) \nu_j dS_x
\\
&\qquad\qquad= \frac{1}{p+1} \int_{\partial D} v^{p+1} \nu_j dS_x + \frac{1}{q+1} \int_{\partial D} u^{q+1} \nu_j dS_x,
\end{split}
\end{equation}
where $D$ is an open subset of $\Omega$.
\end{lem}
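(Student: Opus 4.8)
The plan is to derive the identity \eqref{eq-poho-1} by multiplying the two equations in \eqref{eq-main} by the derivatives $\partial_{x_j} v$ and $\partial_{x_j} u$ respectively, adding, and integrating over $D$. Concretely, I would start from
\[
-\Delta u = v^p, \qquad -\Delta v = u^q \quad \text{in } \Omega,
\]
multiply the first by $\partial_{x_j} v$ and the second by $\partial_{x_j} u$, and sum to obtain the pointwise relation
\[
-(\Delta u)(\partial_{x_j} v) - (\Delta v)(\partial_{x_j} u) = v^p \,\partial_{x_j} v + u^q \,\partial_{x_j} u = \partial_{x_j}\!\left(\tfrac{v^{p+1}}{p+1}\right) + \partial_{x_j}\!\left(\tfrac{u^{q+1}}{q+1}\right).
\]
The right-hand side is a pure divergence (in the $j$-th coordinate direction), so integrating over $D$ and applying the divergence theorem turns $\int_D \partial_{x_j}(\cdot)\,dx$ into the boundary terms $\frac{1}{p+1}\int_{\partial D} v^{p+1}\nu_j\,dS_x + \frac{1}{q+1}\int_{\partial D} u^{q+1}\nu_j\,dS_x$ appearing on the right of \eqref{eq-poho-1}.

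For the left-hand side I would use the classical integration-by-parts identity for the term $\int_D (\Delta u)(\partial_{x_j} v)\,dx$. Integrating by parts once moves a derivative off $\Delta u$: $\int_D (\Delta u)(\partial_{x_j} v)\,dx = \int_{\partial D} \frac{\partial u}{\partial \nu}\,\partial_{x_j} v\, dS_x - \int_D \nabla u \cdot \nabla(\partial_{x_j} v)\,dx$, and symmetrically for $\int_D (\Delta v)(\partial_{x_j} u)\,dx$. Adding these, the two interior integrals combine to $-\int_D \partial_{x_j}(\nabla u \cdot \nabla v)\,dx$, which by the divergence theorem equals $-\int_{\partial D}(\nabla u\cdot\nabla v)\,\nu_j\,dS_x$. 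Collecting everything, the interior contributions cancel and one is left precisely with the boundary expression on the left of \eqref{eq-poho-1}. All the manipulations are legitimate because $(u,v)\in C^2(\Omega)\times C^2(\Omega)$ and $D$ is a smooth open subset with $\overline{D}\subset\Omega$ (or at least $D$ relatively compact and smooth), so no boundary regularity of $\partial\Omega$ or decay issues enter.

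There is essentially no hard obstacle here: this is the standard Rellich–Pohozaev computation adapted to a system, localized on an arbitrary smooth subdomain rather than on $\Omega$ itself. The only point requiring a little care is bookkeeping the cross terms so that the interior integrals genuinely cancel — one must be sure to pair $\Delta u$ with $\partial_{x_j} v$ and $\Delta v$ with $\partial_{x_j} u$ (not $u$ with $v$), which is what makes the symmetric structure $\int \nabla u \cdot \nabla(\partial_{x_j} v) + \nabla v \cdot \nabla(\partial_{x_j} u) = \int \partial_{x_j}(\nabla u \cdot \nabla v)$ work out. Since the statement is purely local and imposes no hypothesis beyond $C^2$ regularity, the proof is short and self-contained, and I would present it in just a few displayed lines.
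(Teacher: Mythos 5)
Your proposal is correct and follows essentially the same route as the paper: multiply each equation by the transverse derivative of the other unknown, integrate by parts once to move a derivative off the Laplacian, and observe that the two interior integrals sum to $\int_D \partial_{x_j}(\nabla u\cdot\nabla v)\,dx$, which the divergence theorem converts into the remaining boundary term. The paper merely presents the two integrations by parts as separate displayed identities before adding them, whereas you combine the pointwise relations first; the computations are the same.
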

\begin{proof}
Multiplying \eqref{eq-main} by $\frac{\partial v}{\partial x_j}$  we get $-\Delta u \frac{\partial v}{\partial x_j} = v^p \frac{\partial v}{\partial x_j}$. Integrating this over the domain $D$ and using an integration by part, we get
\begin{equation}\label{eq-poho-2}
-\int_{\partial D} \frac{\partial u}{\partial \nu} \frac{\partial v}{\partial x_j} dS_x + \int_{D} \nabla u \cdot \frac{\partial \nabla v}{\partial x_j} dS_x = \frac{1}{p+1} \int_{\partial D} v^{p+1} \nu_j dS_x.
\end{equation}
Similarly we have $-\Delta v \frac{\partial u}{\partial x_j} = u^{q} \frac{\partial u}{\partial x_j}$ and
\begin{equation}\label{eq-poho-3}
-\int_{\partial D} \frac{\partial v}{\partial \nu} \frac{\partial u}{\partial x_j} dS_x + \int_{D} \nabla v \cdot \frac{\partial \nabla u}{\partial x_j} dS_x = \frac{1}{q+1} \int_{\partial D} v^{q+1}  \nu_j dS_x.
\end{equation}
Summing up \eqref{eq-poho-2} and \eqref{eq-poho-3} we get
\begin{equation*}
\begin{split}
&\frac{1}{p+1} \int_{\partial D}v^{p+1} \nu_j dS_x + \frac{1}{q+1} \int_{\partial D} u^{q+1} \nu_j dS_x
\\
&\qquad =-\int_{\partial D} \left(\frac{\partial u}{\partial \nu} \frac{\partial v}{\partial x_j} +\frac{\partial v}{\partial \nu} \frac{\partial u}{\partial x_j}\right) dS_x + \int_{D} \nabla v \cdot \frac{\partial \nabla u}{\partial x_j} dS_x+ \int_{D} \nabla u \cdot \frac{\partial \nabla v}{\partial x_j} dS_x
\\
&\qquad =-\int_{\partial D} \left( \frac{\partial u}{\partial \nu} \frac{\partial v}{\partial x_j} + \frac{\partial v}{\partial \nu} \frac{\partial u}{\partial x_j}\right) dS_x + \int_{\partial D}(\nabla u \cdot \nabla v) \nu_j dS_x,
\end{split}
\end{equation*}
where we applied an integration by parts in the second identity. This is the desired identity \eqref{eq-poho-1}. The lemma is proved.
\end{proof}
From Section 4 to Section 7, we shall always denote by $\{(u_{\ep}, v_{\ep})\}_{\ep>0}$ a sequence of the solutions of type $(ME)$, and also we shall keep using the notations $x_{\ep}$ and $\lambda_{\ep}$ defined in \eqref{eq-max} along with $d_{\epsilon} =\frac{N_{\ep}}{\lambda_{\ep}}$ defined in \eqref{eq-de}.

\section{Estimates for $v_{\ep}$ on the annulus}
In this section we prove a sharp estimate for $v_{\ep}$ and its derivatives on the annulus $\partial B(x_{\ep}, 2d_{\ep})$, which will be necessary for evaluating the left hand side of \eqref{eq-hn-15}. Although the \emph{a priori} assumption $d_{\ep} \rightarrow 0$ makes the analysis not easy, we shall get the desired estimate through a careful analysis. We state the following result.
\begin{lem}\label{lem-g10}Suppose $p \in\left(\frac{n}{n-2}, \frac{n+2}{n-2}\right]$. Assume that $\{(u_{\ep},v_{\ep})\}_{\ep >0}$ is a sequence of solutions to \eqref{eq-main} of type $(ME)$ and that $\lim_{\ep \rightarrow 0} d_{\ep} =0$. Then, for $x \in \partial B(x_{\ep}, 2d_{\ep})$ we have the estimates
\begin{equation}\label{eq-g1-0}
v_{\epsilon}(x) = A_U \lambda_{\epsilon}^{-\frac{n}{q_{\epsilon}+1}} G(x,x_{\epsilon}) + o (d_{\epsilon}^{-(n-2)} \lambda_{\epsilon}^{-\frac{n}{q+1}})
\end{equation}
and
\begin{equation}\label{eq-g1-8}
\nabla  v_{\epsilon}(x) = A_U \lambda_{\epsilon}^{-\frac{n}{q_{\epsilon}+1}} \nabla  G(x,x_{\epsilon}) + o (d_{\epsilon}^{-(n-1)} \lambda_{\epsilon}^{-\frac{n}{q+1}}).
\end{equation}
In addition, the $o$-notation is uniform with respect to $x \in \partial B(x_{\ep}, 2d_{\ep})$, i.e., it holds that
\begin{equation*}
\lim_{\ep \rightarrow 0} \sup_{x \in \partial B(x_{\ep}, 2d_{\ep})} \frac{|o (d_{\ep}^{-k}\lambda_{\ep}^{-\frac{n}{q+1}})|}{ (d_{\ep}^{-k}\lambda_{\ep}^{-\frac{n}{q+1}})} = 0\quad \textrm{for}~ k = n-1~\textrm{or}~n-2.
\end{equation*}
\end{lem}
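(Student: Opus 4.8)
The goal is to represent $v_\ep$ on the sphere $\partial B(x_\ep,2d_\ep)$ by Green's function against the source $u_\ep^{q_\ep}$, which is concentrated near $x_\ep$ at scale $\lambda_\ep^{-1}\ll d_\ep$. Writing
\begin{equation*}
v_\ep(x)=\int_\Omega G(x,y)\,u_\ep(y)^{q_\ep}\,dy,
\end{equation*}
I would split the integral into the inner region $B(x_\ep,d_\ep)$ and the outer region $\Omega\setminus B(x_\ep,d_\ep)$. On the inner region, I change variables $y=\lambda_\ep^{-1}z+x_\ep$ so that $u_\ep(y)^{q_\ep}\,dy=\lambda_\ep^{-\frac{n q_\ep}{q_\ep+1}}\lambda_\ep^{-n}\,\widetilde u_\ep(z)^{q_\ep}\,dz$, and $\lambda_\ep^{-\frac{nq_\ep}{q_\ep+1}-n}=\lambda_\ep^{-\frac{n}{q_\ep+1}}\cdot\lambda_\ep^{-\frac{2n}{q_\ep+1}}\cdot\lambda_\ep^{-n+\frac{n}{q_\ep+1}}$ — more cleanly, the exponent bookkeeping gives $\lambda_\ep^{-n}u_\ep(\lambda_\ep^{-1}z+x_\ep)^{q_\ep}=\lambda_\ep^{-\frac{n}{q_\ep+1}}\widetilde u_\ep(z)^{q_\ep}$ after using $\frac{n q_\ep}{q_\ep+1}+n-\frac{n}{q_\ep+1}\cdot 0=\dots$; in any case the normalization is chosen precisely so that the prefactor $\lambda_\ep^{-n/(q_\ep+1)}$ appears. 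Then on $B(x_\ep,d_\ep)$, for $x\in\partial B(x_\ep,2d_\ep)$ and $y\in B(x_\ep,d_\ep)$ one has $|x-y|\sim d_\ep$, so $G(x,y)=G(x,x_\ep)+O(d_\ep^{-(n-1)}|y-x_\ep|)$ by the mean value theorem applied to the decomposition \eqref{eq-g-decom} (the singular part is smooth away from the diagonal and $H$ is smooth), and I replace $G(x,y)$ by $G(x,x_\ep)$ inside the integral. The error is controlled because $\int_{B(x_\ep,d_\ep)}|y-x_\ep|\,u_\ep(y)^{q_\ep}\,dy$, after rescaling, is $\lambda_\ep^{-1}\lambda_\ep^{-n/(q_\ep+1)}\int_{B(0,N_\ep)}|z|\,\widetilde u_\ep(z)^{q_\ep}\,dz$, and the latter integral is $o(N_\ep)$ by Lemma \ref{lem-gu} together with the decay \eqref{eq-decay} — indeed for $p>\frac n{n-2}$ one has $U(z)^q\sim b^q|z|^{-q(n-2)}$ with $q(n-2)>n$, so $\int |z| U^q\,dz$ over $B(0,N_\ep)$ is $O(N_\ep^{1-(q(n-2)-n)})$ or better, giving an error $o(d_\ep^{-(n-1)}\lambda_\ep^{-n/(q+1)})$. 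The leading term becomes $G(x,x_\ep)\lambda_\ep^{-n/(q_\ep+1)}\int_{B(0,N_\ep)}\widetilde u_\ep(z)^{q_\ep}\,dz$, and since $\int_{B(0,N_\ep)}\widetilde u_\ep(z)^{q_\ep}\,dz\to\int_{\mathbb R^n}U^q=A_U$ by dominated convergence (domination by $CU^q\in L^1$ via Lemma \ref{lem-gu}, and $q_\ep\to q$, $N_\ep\to\infty$), this matches the claimed main term. Finally I use $\lambda_\ep^{-n/(q_\ep+1)}=\lambda_\ep^{-n/(q+1)}(1+o(1))$ since $\lambda_\ep^\ep\le C$ gives $\lambda_\ep^{n/(q+1)-n/(q_\ep+1)}=\lambda_\ep^{O(\ep)}\to 1$ (using $\lambda_\ep\to\infty$; one needs also a lower control but $\lambda_\ep^\ep$ bounded suffices to see the ratio tends to $1$).

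**The outer region and the derivative estimate.** For the outer region $\Omega\setminus B(x_\ep,d_\ep)$, I bound $G(x,y)\le c_n|x-y|^{-(n-2)}\le C$ uniformly (since... actually $|x-y|$ can be small when $y$ is near $\partial B(x_\ep,2d_\ep)$, so instead I use $G(x,y)\le c_n|x-y|^{-(n-2)}$ and integrate: $\int_{\Omega\setminus B(x_\ep,d_\ep)}|x-y|^{-(n-2)}u_\ep(y)^{q_\ep}\,dy$, and $u_\ep(y)=\lambda_\ep^{n/(q_\ep+1)}\widetilde u_\ep(\lambda_\ep(y-x_\ep))\le C\lambda_\ep^{n/(q_\ep+1)}U(\lambda_\ep(y-x_\ep))\le C\lambda_\ep^{n/(q_\ep+1)}(\lambda_\ep|y-x_\ep|)^{-(n-2)}$ for $|y-x_\ep|\ge d_\ep$, so $u_\ep(y)^{q_\ep}\le C\lambda_\ep^{n/(q_\ep+1)q_\ep-q_\ep(n-2)}|y-x_\ep|^{-q_\ep(n-2)}=C\lambda_\ep^{n/(q_\ep+1)}\lambda_\ep^{-n}\cdot\lambda_\ep^{n-q_\ep(n-2)+n q_\ep/(q_\ep+1)-n/(q_\ep+1)}\dots$). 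Rather than belabor this, the clean statement is: the outer contribution is $O\big(\lambda_\ep^{-n/(q+1)}d_\ep^{-(n-2)}N_\ep^{-(q(n-2)-n)}\log\big)=o(d_\ep^{-(n-2)}\lambda_\ep^{-n/(q+1)})$ because $q(n-2)>n$ and $N_\ep\to\infty$. For the gradient estimate \eqref{eq-g1-8} I repeat the argument with $\nabla_x G(x,y)$ in place of $G(x,y)$: on the inner region $\nabla_x G(x,y)=\nabla_x G(x,x_\ep)+O(d_\ep^{-n}|y-x_\ep|)$, the main term is $\nabla_x G(x,x_\ep)\cdot A_U\lambda_\ep^{-n/(q_\ep+1)}(1+o(1))$, and the error is $o(d_\ep^{-(n-1)}\lambda_\ep^{-n/(q+1)})$ by the same rescaled integral bound; the outer region contributes $o(d_\ep^{-(n-1)}\lambda_\ep^{-n/(q+1)})$ since $|\nabla_x G(x,y)|\le C|x-y|^{-(n-1)}$.

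**Uniformity in $x$.** The uniformity claim follows because every error estimate above depends on $x\in\partial B(x_\ep,2d_\ep)$ only through the quantities $|x-y|\sim d_\ep$, $\mathbf d(x)\sim d_\ep$ (needed for $\nabla_x H$ bounds via Lemma \ref{lem-Rey} or \ref{lem-h-asym}), and the bounds $G(x,x_\ep)\sim c_n(2d_\ep)^{-(n-2)}$, $|\nabla_x G(x,x_\ep)|\le C d_\ep^{-(n-1)}$ — all of which are uniform over the sphere since $\mathrm{dist}(x,\partial\Omega)\ge 4d_\ep-2d_\ep=2d_\ep$ for such $x$. Taking the supremum over $x$ in each $o(\cdot)$ term and using $N_\ep\to\infty$, $q_\ep\to q$, $\lambda_\ep\to\infty$ with $\lambda_\ep^\ep\le C$ yields the stated uniform convergence.

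**Main obstacle.** The delicate point is the replacement $G(x,y)\rightsquigarrow G(x,x_\ep)$ on the inner ball: since $d_\ep\to 0$, the value $G(x,x_\ep)\sim d_\ep^{-(n-2)}$ blows up, so one must verify that the relative error — the ratio of the error term to $d_\ep^{-(n-2)}\lambda_\ep^{-n/(q+1)}$ — tends to zero, and this is exactly where $N_\ep=d_\ep\lambda_\ep\to\infty$ (Lemma \ref{lem-bb}) is used: the error-to-main ratio is of order $N_\ep^{-1}\int_{B(0,N_\ep)}|z|U^q\,dz\big/\int_{\mathbb R^n}U^q\,dz$, which is $o(1)$ precisely because $q>n/(n-2)$ makes $\int|z|U^q\,dz$ finite (or at worst slowly growing, still $o(N_\ep)$). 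Handling the boundary layer where $y$ approaches $\partial B(x_\ep,2d_\ep)$ in the outer integral, and confirming the exponent arithmetic for the prefactor $\lambda_\ep^{-n/(q_\ep+1)}$, are the routine-but-careful parts.
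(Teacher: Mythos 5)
Your proposal follows essentially the same route as the paper's proof: represent $v_\ep$ via Green's function against $u_\ep^{q_\ep}$, isolate the leading term $G(x,x_\ep)\int u_\ep^{q_\ep}\approx A_U\lambda_\ep^{-n/(q_\ep+1)}G(x,x_\ep)$ by dominated convergence (using Lemma \ref{lem-gu}), and control the remainder with a near/far split, the Taylor bound $|G(x,y)-G(x,x_\ep)|\lesssim |y-x_\ep|\,d_\ep^{-(n-1)}$ on the inner ball, the decay \eqref{eq-decay}, and $N_\ep\to\infty$. The only organizational difference is that the paper writes the main term against the full integral $\int_\Omega u_\ep^{q_\ep}$ and then splits the remainder $\int_\Omega[G(x,y)-G(x,x_\ep)]u_\ep^{q_\ep}$ into three annular pieces ($|y-x_\ep|<d_\ep$, $d_\ep<|y-x_\ep|<4d_\ep$, and $|y-x_\ep|>4d_\ep$), whereas you take the main term only from the inner ball and lump the rest into a single outer tail; these are equivalent reorganizations, and your exponent bookkeeping, though abbreviated, matches the paper's (the ratio of error to main term is $N_\ep^{n-(n-2)q}\to 0$).
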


\begin{proof}
As the function $(u_{\epsilon}, v_{\epsilon})$ is a solution to \eqref{eq-main}, we have
\begin{equation}\label{eq-g1-1}
\begin{split}
v_{\epsilon}(x) &= \int_{\Omega} G(x,y) u_{\epsilon}^{q}(y) dy 
\\
&= G(x,x_{\epsilon}) \left( \int_{\Omega}u_{\epsilon}^q (y) dy\right) + \int_{\Omega} [G(x,y) - G(x,x_{\epsilon})] u_{\epsilon}^{q} (y) dy.
\end{split}
\end{equation}
Given the estimates \eqref{eq-decay} and \eqref{eq-bound}, we may apply the dominated convergence theorem to yield
\begin{equation*} 
\lim_{\epsilon \rightarrow 0}\lambda_{\epsilon}^{\frac{n}{q_{\ep}+1}} \int_{\Omega} u_{\epsilon}^{q_{\ep}}(y) dy =\lim_{\epsilon \rightarrow 0} \int_{\Omega_{\epsilon}} \widetilde{u}_{\epsilon}^{q_{\ep}}(y) dy = \int_{\mathbb{R}^n} U^{q}(y) dy = A_U.
\end{equation*}
Using this and the fact that $G(x,x_{\ep}) = O(|x-x_{\ep}|^{-(n-2)}) = O (d_{\ep}^{-(n-2)})$ for $x \in \partial B(x_{\ep}, 2d_{\ep})$, we have
\begin{equation*}
G(x,x_{\ep}) \left( \int_{\Omega} u_{\ep}^{q_{\ep}}(y) dy\right) = \lambda_{\ep}^{-\frac{n}{q_{\ep}+1}} A_U G(x,x_{\ep}) + o (\lambda_{\ep}^{-\frac{n}{q+1}} d_{\ep}^{-(n-2)}).
\end{equation*}
Hence, to prove \eqref{eq-g1-20}, it is only left to estimate the last term of \eqref{eq-g1-1}. For this aim, we divide the term into three parts as follows:
\begin{equation}\label{eq-g1-11}
\int_{\Omega} [G(x,y) - G(x,x_{\epsilon})] u_{\epsilon}^{q_{\ep}} (y) dy = I_1 (x) + I_2 (x) + I_3 (x),
\end{equation}
where
\begin{equation*}
\begin{split}
I_1 (x)&:= \int_{B(x_{\epsilon}, d_{\epsilon})} [G(x,y) - G(x,x_{\epsilon})] u_{\epsilon}^{q_{\ep}} (y)dy,
\\
I_2 (x)&:= \int_{B(x_{\epsilon},4d_{\epsilon})\setminus B(x_{\epsilon},d_{\epsilon})}[G(x,y) - G(x,x_{\epsilon})] u_{\epsilon}^{q_{\ep}} (y) dy,
\\
I_3 (x)&:= \int_{\Omega \setminus B(x_{\epsilon}, 4d_{\epsilon})} [G(x,y) - G(x,x_{\epsilon})]u_{\epsilon}^{q_{\ep}} (y) dy.
\end{split}
\end{equation*}
We shall show that $I_1 (x)$, $I_2 (x)$, and $I_3 (x)$ are estimated as $o \left( d_{\ep}^{-(n-2)} \lambda_{\ep}^{-\frac{n}{q+1}}\right)$ and their derivatives $\nabla I_1 (x)$, $\nabla I_2 (x)$, and $\nabla I_3 (x)$ are estimated as $o \left( d_{\ep}^{-(n-1)} \lambda_{\ep}^{-\frac{n}{q+1}}\right).$
\

\noindent \emph{Estimate of $I_1$.} Since $|x-x_{\epsilon}|=2d_{\epsilon}$ we have $|x-y| \geq d_{\epsilon}$ for $y ~\in B(x_{\epsilon}, d_{\epsilon})$.  Thus
\begin{equation*}
|\nabla_y G(x,y)| \leq Cd_{\epsilon}^{-(n-1)}\quad \textrm{and}\quad |\nabla_x \nabla_y G(x,y)| \leq C d_{\ep}^{-n}\qquad \forall y \in B(x_{\epsilon}, d_{\epsilon}).
\end{equation*}
Combining this with the mean value formula yields 
\begin{equation}\label{eq-g1-22}
|G(x,y) - G(x,x_{\epsilon})| \leq C|y-x_{\epsilon}| d_{\epsilon}^{-(n-1)}\quad \textrm{and}\quad |\nabla_x G(x,y) -\nabla_x G(x,x_{\epsilon})| \leq C|y-x_{\epsilon}| d_{\epsilon}^{-n}
\end{equation}
for all $y \in B(x_{\ep}, d_{\ep})$. Using this and \eqref{eq-bound} we estimate $I_1$ as
\begin{equation}\label{eq-g1-2}
\begin{split}
I_1 (x) &\leq C d_{\epsilon}^{-(n-1)}\int_{B(x_{\epsilon}, d_{\epsilon}/2)} |y-x_{\epsilon}|\, \lambda_{\epsilon}^{\frac{q_{\ep}n}{q_{\ep}+1}} U^{q}(\lambda_{\epsilon} (y-x_{\epsilon})) dy
\\
&\leq C d_{\epsilon}^{-(n-1)} \lambda_{\epsilon}^{\frac{q_{\ep}n}{q_{\ep}+1}}\lambda_{\epsilon}^{-(n+1)} \int_{B(0, N_{\epsilon}/2)} |y|U^q (y) dy
\\
& = Cd_{\ep}^{-(n-2)}\lambda_{\epsilon}^{-\frac{n}{q_{\ep}+1}} N_{\ep}^{-1}   \int_{B(0, N_{\epsilon}/2)} |y|U^q (y) dy.
\end{split}
\end{equation}
Injecting this into \eqref{eq-g1-2} we get
\begin{equation*}
I_1 (x) = o (d_{\ep}^{-(n-2)} \lambda_{\ep}^{-\frac{n}{q+1}}).
\end{equation*}
By the same way along with the second inequality of \eqref{eq-g1-22}, we can obtain the estimate 
\[
\nabla I_1 (x) = o (d_{\ep}^{-(n-1)} \lambda_{\ep}^{-\frac{n}{q+1}}).
\]

\noindent \emph{Estimate of $I_2$.} For $y \in B(x_{\epsilon}, 4d_{\epsilon}) \setminus B(x_{\epsilon}, d_{\epsilon})$ we use the estimates \eqref{eq-bound} and \eqref{eq-decay} to find
\begin{equation*} 
u_{\epsilon}(y) \leq C \lambda_{\ep}^{\frac{n}{q_{\ep}+1}} U (\lambda_{\epsilon}(y-x_{\epsilon})) \leq \frac{C\lambda_{\ep}^{\frac{n}{q_{\ep}+1}}}{(\lambda_{\epsilon}d_{\epsilon})^{n-2}}.
\end{equation*}
Note that 
\begin{equation}\label{eq-g1-35} 
|x-y| \leq 8d_{\ep}\quad \textrm{for}~ y \in B(x_{\ep}, 4d_{\ep})\quad \textrm{and}\quad x \in \partial B(x_{\ep}, 2d_{\ep}).
\end{equation}
 Hence we have
\begin{equation}\label{eq-g1-31}
\left\{\begin{aligned}
|G(x,y)| + |G(x,x_{\epsilon})| &\leq \frac{c_n}{|x-y|^{n-2}} + \frac{c_n}{d_{\epsilon}^{(n-2)}} \leq \frac{C}{|x-y|^{n-2}},
\\
|\nabla_x G(x,y)| + |\nabla_x G (x,x_{\epsilon})| &\leq \frac{c_n}{|x-y|^{n-1}} + \frac{c_n}{d_{\epsilon}^{(n-1)}} \leq \frac{C}{|x-y|^{n-1}}.
\end{aligned}\right.
\end{equation}
Using the first estimate of \eqref{eq-g1-31} and \eqref{eq-g1-35}  we deduce 
\begin{equation*}
\begin{split}
I_2 (x) &\leq C \lambda_{\ep}^{\frac{qn}{q+1}} d_{\ep}^{-(n-2) q} \lambda_{\ep}^{-(n-2)q}\int_{B(x_{\epsilon},4d_{\epsilon})\setminus B(x_{\epsilon}, d_{\epsilon})} \frac{1}{|x-y|^{n-2}} dy
\\
&\leq C \lambda_{\ep}^{\frac{qn}{q+1}} d_{\ep}^{2-(n-2) q} \lambda_{\ep}^{-(n-2)q} 
\\
&= C\lambda_{\ep}^{-\frac{n}{q+1}}d_{\ep}^{-(n-2)} N_{\ep}^{n-(n-2)q}.
\end{split}
\end{equation*}
Since $q > n/(n-2)$ the above estimate leads to 
\begin{equation}\label{eq-g1-3}
I_2 (x) = o\left( \lambda_{\ep}^{-\frac{n}{q+1}}d_{\ep}^{-(n-2)}\right).
\end{equation}
Similarly, using the second estimate of \eqref{eq-g1-31}, we obtain
\[\nabla I_2 (x) = O \left(\lambda_{\ep}^{-\frac{n}{q+1}}d_{\ep}^{-(n-1)} N_{\ep}^{n-(n-2)q}\right)= o \left(\lambda_{\ep}^{-\frac{n}{q+1}}d_{\ep}^{-(n-1)}\right).\]
\

\noindent\emph{Estimate of $I_3$.} In view of that $|x-x_{\epsilon}|=2d_{\epsilon}$, we easily get the following estimates
\begin{equation}\label{eq-g1-32}
\left\{\begin{aligned}
|G(x,y) - G(x,x_{\epsilon})| &\leq C d_{\epsilon}^{-(n-2)}\quad \textrm{for}~ y \in \Omega \setminus B(x_{\epsilon}, 4d_{\epsilon}),
\\
|\nabla_x G(x,y) - \nabla_x G(x,x_{\epsilon})| &\leq C d_{\epsilon}^{-(n-1)}\quad \textrm{for}~ y \in \Omega \setminus B(x_{\epsilon}, 4d_{\epsilon}).
\end{aligned}\right.
\end{equation}
Using the first inequality of \eqref{eq-g1-32}, we get
\begin{equation*}
\begin{split}
I_3 (x) \leq Cd_{\epsilon}^{-(n-2)}\int_{\Omega \setminus B(x_{\epsilon}, 4d_{\epsilon})} u_{\epsilon}^{q_{\ep}}(y) dy. 
\end{split}
\end{equation*}
From \eqref{eq-decay} and \eqref{eq-bound} we deduce
\begin{equation*}
\begin{split}
\int_{\Omega \setminus B(x_{\epsilon}, 4d_{\epsilon})} u_{\epsilon}^{q_{\ep}}(y) dy  &= \lambda_{\epsilon}^{-\frac{n}{q_{\ep}+1}} \int_{\Omega_{\epsilon}\setminus B(0, 4N_{\epsilon})} \widetilde{u}_{\epsilon}^{q_{\ep}} (y) dy
\\
&\leq C\lambda_{\epsilon}^{-\frac{n}{q_{\ep}+1}}\int_{\mathbb{R}^n \setminus B(0,4N_{\epsilon})} U^{q}(y) dy
\\
&\leq C\lambda_{\epsilon}^{-\frac{n}{q+1}} N_{\epsilon}^{-(n-2)q +n}.
\end{split}
\end{equation*}
Thus, 
\begin{equation}\label{eq-g1-4}
I_3 (x)\leq C d_{\epsilon}^{-(n-2)} \lambda_{\epsilon}^{-\frac{n}{q+1}} N_{\epsilon}^{-(n-2)q +n} = o \left( d_{\epsilon}^{-(n-2)} \lambda_{\epsilon}^{-\frac{n}{q+1}}  \right),
\end{equation}
where we used the fact that $(n-2)p -2 >n-2$. Similarly, applying the second estimate of \eqref{eq-g1-32}, we may obtain 
\[\nabla I_3 (x) = O \left( d_{\epsilon}^{-(n-1)} \lambda_{\epsilon}^{-\frac{n}{q+1}} N_{\epsilon}^{-(n-2)q +n} \right)= o \left( d_{\epsilon}^{-(n-1)} \lambda_{\epsilon}^{-\frac{n}{q+1}}  \right).\] 
Collecting \eqref{eq-g1-2}, \eqref{eq-g1-3}, and \eqref{eq-g1-4} we get
\begin{equation*}
I_1 (x)+ I_2 (x)+ I_3 (x)= o\left(d_{\epsilon}^{-(n-2)} \lambda_{\epsilon}^{-\frac{n}{q+1}}\right)
\end{equation*}
and 
\begin{equation*}
|\nabla_x I_1 (x)| +|\nabla_x I_2 (x)|+ |\nabla_x I_3 (x)| = o\left(d_{\epsilon}^{-(n-1)} \lambda_{\epsilon}^{-\frac{n}{q+1}}\right).
\end{equation*}
We can deduce from the above estimates to get
\begin{equation*}
v_{\epsilon} (x) = A_U \lambda_{\epsilon}^{-\frac{n}{q+1}} G(x,x_{\epsilon}) + o \left(d_{\epsilon}^{-(n-2)} \lambda_{\epsilon}^{-\frac{n}{q+1}}\right)
\end{equation*}
and
\begin{equation*}
\nabla_x v_{\epsilon} (x) = A_U \lambda_{\epsilon}^{-\frac{n}{q+1}} \nabla_x G(x,x_{\epsilon}) + o \left(d_{\epsilon}^{-(n-1)} \lambda_{\epsilon}^{-\frac{n}{q+1}}\right).
\end{equation*}
The lemma is proved.
\end{proof}

\section{The case $p > \frac{n}{n-2}$}
This section is devoted to prove Theorem \ref{thm-1} for the case $p > \frac{n}{n-2}$. 
For the proof, as we explained in the introduction, we will assume that the maximum point $x_{\epsilon}$ converges to a boundary point, and derive a contradiction from the Pohozaev type identity \eqref{eq-hn-15} on the annulus $\partial B(x_{\ep}, 2d_{\ep})$. 
\begin{lem}\label{lem-g1}Suppose $p \in\left(\frac{n}{n-2}, \frac{n+2}{n-2}\right]$. Assume that $\{(u_{\ep},v_{\ep})\}_{\ep >0}$ is a sequence of solutions to \eqref{eq-main} of type $(ME)$ and that $\lim_{\ep \rightarrow 0} d_{\ep} =0$. Then the following estimates hold. For $x \in \partial B(x_{\ep}, 2d_{\ep})$ we have
\begin{equation}\label{eq-g1-20}
u_{\epsilon} (x)= A_V \lambda_{\epsilon}^{-\frac{n}{p+1}} G(x,x_{\epsilon}) + o (d_{\epsilon}^{-(n-2)}\lambda_{\epsilon}^{-\frac{n}{p+1}})
\end{equation}
and
\begin{equation}\label{eq-g1-7}
\nabla u_{\epsilon} (x)= A_V \lambda_{\epsilon}^{-\frac{n}{p+1}} \nabla G(x,x_{\epsilon}) + o (d_{\epsilon}^{-(n-1)}\lambda_{\epsilon}^{-\frac{n}{p+1}}).
\end{equation}
Here $A_{V}$ is the constant given in \eqref{eq-AUV} and the $o$-notation is uniform with respect to $x \in \partial B(x_{\ep}, 2d_{\ep})$.
\end{lem}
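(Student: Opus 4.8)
The statement is the mirror image of Lemma \ref{lem-g10}, obtained by interchanging the pairs $(u_\ep,q_\ep)$ and $(v_\ep,p)$, so the plan is to run the same argument. I would begin from Green's representation
\[
u_\ep(x) = \int_\Omega G(x,y)\,v_\ep^p(y)\,dy = G(x,x_\ep)\int_\Omega v_\ep^p(y)\,dy + \int_\Omega \bigl[G(x,y)-G(x,x_\ep)\bigr] v_\ep^p(y)\,dy,
\]
which is valid since $-\Delta u_\ep = v_\ep^p$ and $u_\ep = 0$ on $\partial\Omega$, together with the analogous identity carrying $\nabla_x G$ in place of $G$ for $\nabla u_\ep$. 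For the first term, the change of variables $z=\lambda_\ep(y-x_\ep)$ gives $\lambda_\ep^{n/(p+1)}\int_\Omega v_\ep^p\,dy = \int_{\Omega_\ep}\widetilde v_\ep^p$; since $p>\frac{n}{n-2}$ the sharp decay $V(r)\sim a r^{-(n-2)}$ from \eqref{eq-decay} makes $V^p$ integrable on $\mathbb{R}^n$ (so $A_V<\infty$), and combined with $\widetilde v_\ep\le CV$ from \eqref{eq-bound} the dominated convergence theorem yields $\lambda_\ep^{n/(p+1)}\int_\Omega v_\ep^p\,dy\to A_V$. Using $G(x,x_\ep)=O(d_\ep^{-(n-2)})$ and $\nabla_x G(x,x_\ep)=O(d_\ep^{-(n-1)})$ for $x\in\partial B(x_\ep,2d_\ep)$, the first term then contributes $A_V\lambda_\ep^{-n/(p+1)}G(x,x_\ep)$ (resp.\ its gradient) up to an error $o(d_\ep^{-(n-2)}\lambda_\ep^{-n/(p+1)})$ (resp.\ $o(d_\ep^{-(n-1)}\lambda_\ep^{-n/(p+1)})$). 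Since $p$ is fixed, the leading term and all error terms carry the same power $\lambda_\ep^{-n/(p+1)}$, so there is no exponent bookkeeping of the sort needed in Lemma \ref{lem-g10}.

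It then remains to show that the integral remainder and its $x$-gradient are $o(d_\ep^{-(n-2)}\lambda_\ep^{-n/(p+1)})$ and $o(d_\ep^{-(n-1)}\lambda_\ep^{-n/(p+1)})$ respectively, and I would split the integral over the three regions $B(x_\ep,d_\ep)$, $B(x_\ep,4d_\ep)\setminus B(x_\ep,d_\ep)$, and $\Omega\setminus B(x_\ep,4d_\ep)$, exactly as there. On the inner ball, $|x-y|\ge d_\ep$ gives $|\nabla_y G(x,y)|\le Cd_\ep^{-(n-1)}$ and $|\nabla_x\nabla_y G(x,y)|\le Cd_\ep^{-n}$, so the mean value inequality bounds the two differences by $C|y-x_\ep|d_\ep^{-(n-1)}$ and $C|y-x_\ep|d_\ep^{-n}$; inserting $v_\ep^p(y)\le C\lambda_\ep^{pn/(p+1)}V^p(\lambda_\ep(y-x_\ep))$ and rescaling reduces the bound to $N_\ep^{-1}\int_{B(0,N_\ep)}|z|V^p(z)\,dz$, which tends to $0$ because $p>\frac{n}{n-2}$. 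On the middle annulus, $v_\ep(y)\le C\lambda_\ep^{n/(p+1)}(\lambda_\ep d_\ep)^{-(n-2)}$ together with $|x-y|\le 8d_\ep$ and the crude bounds $|G(x,y)|+|G(x,x_\ep)|\le C|x-y|^{-(n-2)}$ and $|\nabla_x G(x,y)|+|\nabla_x G(x,x_\ep)|\le C|x-y|^{-(n-1)}$ leave a power $N_\ep^{\,n-(n-2)p}$, which is $o(1)$ since $(n-2)p>n$. On the outer region, $|G(x,y)-G(x,x_\ep)|\le Cd_\ep^{-(n-2)}$ and $|\nabla_x G(x,y)-\nabla_x G(x,x_\ep)|\le Cd_\ep^{-(n-1)}$ reduce matters to $\int_{\Omega\setminus B(x_\ep,4d_\ep)}v_\ep^p\,dy$, which after rescaling is at most $C\lambda_\ep^{-n/(p+1)}\int_{\mathbb{R}^n\setminus B(0,4N_\ep)}V^p = o(\lambda_\ep^{-n/(p+1)})$ by integrability of $V^p$. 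Summing the three contributions yields \eqref{eq-g1-20} and \eqref{eq-g1-7}, and since every bound depends on $x$ only through $|x-x_\ep|=2d_\ep$, the $o$-notation is uniform over $\partial B(x_\ep,2d_\ep)$.

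I do not expect a genuine obstacle: the argument is a line-by-line transcription of Lemma \ref{lem-g10}. The only point that must be respected is that the decay of $V$ is used at its sharp rate $V(r)\sim a r^{-(n-2)}$, which is exactly what makes $V^p$ barely integrable and enters each of the three region estimates; this is why the hypothesis $p>\frac{n}{n-2}$ is essential here and the borderline case $p=\frac{n}{n-2}$ must be handled separately in Section 6.
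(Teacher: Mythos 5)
Your proposal is correct and follows the paper's approach exactly: the paper's proof merely cites Lemma \ref{lem-g10} and explicitly addresses only the one changed estimate, $\int_{B(0,N_\ep)}|y|V^p(y)\,dy$, which it splits into three cases according to whether $p$ is greater than, equal to, or less than $\frac{n+1}{n-2}$ before concluding that the power of $N_\ep$ is always below $1$, so that multiplication by $N_\ep^{-1}$ gives $o(1)$. Your shorthand ``tends to $0$ because $p>\frac{n}{n-2}$'' compresses this three-case check, but it is the same observation and the conclusion is correct.
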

\begin{proof}
The proof follows the same lines of the proof of Lemma \ref{lem-g10}.
The only different part is \eqref{eq-g1-2}, which should be replaced by \begin{equation*}
\begin{split}
\int_{B(0,N_{\epsilon})} |y|V^p (y) dy & \leq C \int_{B(0,N_{\epsilon})} \frac{1}{(1+|y|)^{(N-2)p-1}} dy
\\
& \leq \left\{\begin{array}{ll} C &\quad \textrm{if}~ p \in \left( \frac{n+1}{n-2}, \frac{n+2}{n-2}\right),
\\
C\log N&\quad \textrm{if}~ p = \frac{n+1}{n-2},
\\
CN^{-(n-2)p +n+1}&\quad \textrm{if}~ p \in \left( \frac{n}{n-2}, \frac{n+1}{n-2}\right).
\end{array}\right.
\end{split}
\end{equation*}
As the power of $N$ is less than $1$, we can get the estimate $I_1 (x) = o (d_{\ep}^{-(n-2)}\lambda_{\ep}^{-\frac{n}{q+1}})$ as in the proof of Lemma \ref{lem-g10}. The other parts of the proof work in the exactly same way.
\end{proof}
Now we are ready to prove the main result for the case $p \in \left(\frac{n}{n-2}, \frac{n+2}{n-2}\right]$.

\begin{proof}[Proof of Theorem \ref{thm-1} for the case $p > \frac{n}{n-2}$ ] Let $d_{\epsilon} = \textrm{dist}(x_{\epsilon}, \partial \Omega)$. Then we need to show that $\inf_{\ep >0} d_{\ep} >0$. For this aim, with a view to a contradiction, we assume the contrary that $d_{\epsilon} \rightarrow 0$ as $\epsilon \rightarrow 0$ in a subsequence. 
\

By Lemma \ref{lem-bb} we have $d_{\epsilon} = \frac{N_{\epsilon}}{\lambda_{\epsilon}}$ with $N_{\epsilon} \rightarrow \infty$. Now we set $D_{\epsilon} = B(x_{\epsilon}, 2d_{\epsilon})$ for each $1 \leq j \leq n$ and we define the values $L_{\epsilon}^j$ and $R_{\epsilon}^j$ by
\begin{equation*}
\begin{split}
L_{\epsilon}^j:&=-\int_{\partial D_{\epsilon}} \left( \frac{\partial u_{\epsilon}}{\partial \nu} \frac{\partial v_{\epsilon}}{\partial x_j} + \frac{\partial v_{\epsilon}}{\partial \nu} \frac{\partial u_{\epsilon}}{\partial x_j} \right) d S_x + \int_{\partial D_{\epsilon}}(\nabla u_{\epsilon} \cdot \nabla v_{\epsilon}) \nu_j dS_x,
\\
 R_{\epsilon}^j:&= \frac{1}{p+1} \int_{\partial D_{\epsilon}} v_{\epsilon}^{p+1} \nu_j dS_x + \frac{1}{q_{\epsilon}+1} \int_{\partial D_{\epsilon}} u_{\epsilon}^{q_{\epsilon}+1} \nu_j dS_x.
\end{split}
\end{equation*}
Applying Lemma \ref{lem-poho} to $(u_{\ep}, v_{\ep})$ with $D = D_{\epsilon}$, we obtain the following identity
\[L_{\epsilon}^j = R_{\epsilon}^j.\]
In what follows, we shall estimate both the values of $L_j^{\epsilon}$ and $R_j^{\epsilon}$ precisely, which will give us a contradiction.

 Using \eqref{eq-g1-7} and \eqref{eq-g1-8} we compute $L_{\epsilon}^j$ as 
\begin{equation}\label{eq-g1-14}
\begin{split}
L_j^{\epsilon}&=-\lambda_{\epsilon}^{-(n-2+\epsilon)} A_U A_V \int_{\partial D_{\epsilon}} \left( \frac{\partial}{\partial \nu} G(x,x_{\epsilon}) \frac{\partial}{\partial x_j} G(x,x_{\epsilon}) + \frac{\partial}{\partial \nu} G(x,x_{\epsilon}) \frac{\partial}{\partial x_j} G(x,x_{\epsilon}) \right) dS_x 
\\
&\quad + \lambda_{\epsilon}^{-(n-2+\epsilon)} A_U A_V\int_{\partial D_{\epsilon}} |\nabla G(x,x_{\epsilon})|^2 \nu_j dS_x + o \left(|\partial D_{\epsilon}| \lambda_{\epsilon}^{-\left( \frac{n}{p+1}+ \frac{n}{q+1}\right)} d_{\epsilon}^{-2(n-1)}\right)
\\
&=-\lambda_{\epsilon}^{-(n-2+\epsilon)} A_U A_V I(2d_{\epsilon}) + o (d_{\epsilon}^{-(n-1)} \lambda_{\epsilon}^{-(n-2)}),
\end{split}
\end{equation}
where we have set
\begin{equation*}
I(r):= \left[ \int_{\partial B(x_{\epsilon},r)} 2 \frac{\partial G}{\partial \nu} (x,x_{\epsilon}) \frac{\partial}{\partial x_j} G(x,x_{\epsilon}) - |\nabla G(x,x_{\epsilon})|^2 \nu_j dS_x\right]\quad \textrm{for}\quad r>0. 
\end{equation*}
To compute the value of $I(2d_{\epsilon})$, we first observe that $I(r)$ is independent of $r>0$. To show this, we remind that $-\Delta_x G(x,x_{\epsilon})=0$ for $x \in A_{r}:= B(x_{\ep}, 2d_{\ep})\setminus B(x_{\ep},r)$ for each $r \in (0, 2d_{\ep})$. Using this and integration by parts we obtain
\begin{equation}\label{eq-g1-55}
\begin{split}
0 & = \int_{A_r} (-\Delta_x G)(x,x_{\epsilon}) \frac{\partial G}{\partial x_j}(x,x_{\epsilon}) dx
\\
& = -\int_{\partial A_r}\frac{\partial G}{\partial \nu}(x,x_{\epsilon}) \frac{\partial G}{\partial x_j}(x,x_{\epsilon}) dS_x + \int_{A_r} \nabla_x G(x,x_{\epsilon}) \frac{\partial \nabla_x G}{\partial x_j} (x,x_{\epsilon}) dx
\\
& = -\int_{\partial A_r}\frac{\partial G}{\partial \nu}(x,x_{\epsilon}) \frac{\partial G}{\partial x_j}(x,x_{\epsilon}) dS_x + \frac{1}{2} \int_{\partial A_r} |\nabla_x G(x,x_{\epsilon})|^2 \nu_j dS_x,
\end{split}
\end{equation}
which implies that $I(r)$ is constant on $(0,2d_{\epsilon}]$. Using this fact, we compute $I(2d_{\ep})$ by the following limit; 
\begin{equation*}
\begin{split}
I(2d_{\epsilon}) &=\lim_{r \rightarrow 0}I(r)
\\
 & = \lim_{r \rightarrow 0} \int_{\partial B(x_{\epsilon},r)} 2 \left( - \frac{c_n(n-2)}{|x-x_{\epsilon}|^{n}} - \frac{\partial H}{\partial \nu}(x,x_{\epsilon})\right) \left( - \frac{c_n (n-2)(x-x_{\epsilon})_j}{|x-x_{\epsilon}|^n}- \frac{\partial H}{\partial x_j}(x,x_{\epsilon})\right)
\\
&\quad - \left( - \frac{c_n(n-2)(x-x_{\epsilon})}{|x-x_{\epsilon}|^{n}} - \nabla H(x,x_{\epsilon})\right)^2 \nu_j dS_x.
\end{split}
\end{equation*}
Thanks to the oddness of the integrand, we have
\begin{equation*}\int_{\partial B(x_{\epsilon},r)} 2 \left( \frac{c_n(n-2)}{|x-x_{\epsilon}|^{n}} \right) \left( \frac{c_n (n-2)(x-x_{\epsilon})_j}{|x-x_{\epsilon}|^n}\right) - \left[\left(  \frac{c_n(n-2)(x-x_{\epsilon})}{|x-x_{\epsilon}|^{n}}\right)^2 \nu_j \right]dS_x =0.
\end{equation*}
Also, since $-\Delta_x H(x,x_{\ep}) =0$ holds for $x \in B(x_{\ep}, 2d_{\ep})$, we may proceed as in \eqref{eq-g1-55} to get
\begin{equation*}
\begin{split}
\int_{\partial B(x_{\epsilon},r)} 2 \left( \frac{\partial H}{\partial \nu}(x,x_{\epsilon})\right) \left( \frac{\partial H}{\partial x_j}(x,x_{\epsilon})\right) - \left[\left( \nabla H(x,x_{\epsilon})\right)^2 \nu_j \right]dS_x =0.
\end{split}
\end{equation*}
Having the above equalities, we can compute the limit as follows.
\begin{equation*}
\begin{split}
I(2d_{\epsilon}) &=\lim_{r \rightarrow 0} \int_{\partial B(x,r)} 2c_n^2 (n-2) \frac{\partial H}{\partial \nu}(x,x_{\epsilon}) \frac{(x-x_{\epsilon})_j}{|x-x_{\epsilon}|^n} + 2 \frac{c_n (n-2)}{|x-x_{\epsilon}|^{n-1}} \frac{\partial H}{\partial x_j}(x,x_{\epsilon}) dS_x
\\
&\qquad - \frac{2c_n (n-2)(x-x_{\epsilon})}{|x-x_{\epsilon}|^n}\nabla H (x,x_{\epsilon})\nu_j dS_x
\\
& = \left[\frac{2c_n (n-2)}{n} \frac{\partial H}{\partial x_j}(x_{\epsilon}, x_{\epsilon}) + 2c_n (n-2) \frac{\partial H}{\partial x_j}(x_{\epsilon}, x_{\epsilon}) - \frac{2c_n (n-2)}{n} \frac{\partial H}{\partial x_j}(x_{\epsilon}, x_{\epsilon}) \right]|S_{n-1}|
\\
& = 2c_n (n-2)|S_{n-1}| \frac{\partial H}{\partial x_j}(x_{\epsilon}, x_{\epsilon}).
\end{split}
\end{equation*}
Injecting this into \eqref{eq-g1-14} we have
\begin{equation}\label{eq-g1-45}
L_j^{\epsilon} = - \lambda_{\epsilon}^{-(n-2+\epsilon)} c_n A_U A_V |S_{n-1}| 2(n-2) \frac{\partial H}{\partial x_j} (x_{\epsilon},~x_{\epsilon}) + o (d_{\epsilon}^{-(n-1)} \lambda_{\epsilon}^{-(n-2)}).
\end{equation}
We find $(a_1, \cdots, a_n) \in S^{n-1}$ such that $n_{x_n} = - (a_1,\cdots, a_n)$. Then, using \eqref{eq-h2} we obtain
\begin{equation}\label{eq-g1-5}
\begin{split}
\sum_{j=1}^{n} a_j L_j^{\epsilon}& = c_n\lambda_{\epsilon}^{-(n-2+\epsilon)} A_U A_V |S_{n-1}| 2(n-2) \sum_{j=1}^{n} a_j \frac{\partial H}{\partial x_j} (x_{\epsilon}, x_{\epsilon}) + o (d_{\epsilon}^{-(n-1)} \lambda_{\epsilon}^{-(n-2)})
\\
& =c_n \lambda_{\epsilon}^{-(n-2+\epsilon)} A_U A_V |S_{n-1}| 2(n-2) \frac{\partial H}{\partial n_{x_\epsilon}} (x_{\epsilon}, x_{\epsilon}) + o (\lambda_{\epsilon}^{-(n-2)} d_{\epsilon}^{-(n-1)})
\\
& \geq C \lambda_{\epsilon}^{-(n-2)} d_{\epsilon}^{-(n-1)} = C\lambda_{\epsilon} N_{\epsilon}^{-(n-1)}.
\end{split}
\end{equation}
Next we shall find an upper bound of $R_j^{\epsilon}$. Applying \eqref{eq-bound} and \eqref{eq-decay} we have
\begin{equation*}
v_{\epsilon} (x) \leq C\lambda_{\epsilon}^{\frac{n}{p+1}} V(\lambda_{\epsilon}(x-x_{\epsilon})) \leq C \lambda_{\epsilon}^{\frac{n}{p+1}} N_{\epsilon}^{-(n-2)}\quad \forall~ x \in \partial B(x_{\ep}, 2d_{\ep}).
\end{equation*}
Using this we estimate
\begin{equation}\label{eq-g1-11}
\begin{split}
\left| \int_{\partial D_{\epsilon}} v_{\epsilon}^{p+1} \nu_j dS_x \right|& \leq C|\partial D_{\epsilon}| \lambda_{\epsilon}^n N_{\epsilon}^{-(n-2)(p+1)}
\\
& \leq C d_{\epsilon}^{(n-1)} \lambda_{\epsilon}^{n} N_{\epsilon}^{-(n-2)(p+1)}
\\
& = C \left( \frac{N_{\epsilon}}{\lambda_{\epsilon}}\right)^{(n-1)} \lambda_{\epsilon}^n N_{\epsilon}^{-(n-2)(p+1)} = C\lambda_{\epsilon} N_{\epsilon}^{(n-1)-(n-2)(p+1)}.
\end{split}
\end{equation}
Similarly we have
\begin{equation*}
u_{\epsilon}(x) \leq C\lambda_{\epsilon}^{\frac{n}{q_{\epsilon}+1}} N_{\epsilon}^{-(n-2)}\quad \forall~x \in \partial B(x_{\ep}, 2d_{\ep}),
\end{equation*} 
and consequently
\begin{equation}\label{eq-g1-12}
\left| \int_{\partial D_{\epsilon}} u_{\epsilon}^{q_{\epsilon}+1}(x) \nu_j dS_x \right| \leq C \lambda_{\epsilon} N_{\epsilon}^{(n-1) - (n-2) (q_{\epsilon}+1)}.
\end{equation}
Collecting \eqref{eq-g1-11}, \eqref{eq-g1-12} with the fact that $p <q$, we get
\begin{equation}\label{eq-g1-13}
|R_j^{\epsilon}| \leq C \lambda_{\epsilon} N_{\epsilon}^{(n-1) - (n-2)(p+1)}.
\end{equation}
Now we combine \eqref{eq-g1-5} and \eqref{eq-g1-13} to get
\begin{equation*}
\begin{split}
\lambda_{\epsilon} N_{\epsilon}^{-(n-1)} &\leq \sum_{j=1}^n a_j L_j^{\ep}
\\
&= \sum_{j=1}^{n} a_j R_j^{\ep}
\\
& \leq C \lambda_{\epsilon} N_{\epsilon}^{(n-1) - (n-2)(p+1)}.
\end{split}
\end{equation*}
Since $N_{\epsilon}$ goes to infinity as $\epsilon \rightarrow 0$, the above inequality yields that
\begin{equation*}
-(n-1) \leq (n-1) - (n-2) (p+1),
\end{equation*}
which is equivalent to $p \leq \frac{n}{n-2}$. However this contradicts to the condition $p > \frac{n}{n-2}$. Thus the assumption $d_{\epsilon} \rightarrow 0$ cannot hold, and so the maximum point $x_{\epsilon}$ converges to an interior point of $\Omega$. The proof is completed.
\end{proof}

\section{The case $p = \frac{n}{n-2}$}
In this section we prove Theorem \ref{thm-1} for the case $p=\frac{n}{n-2}$. The strategy is same with the proof for the case $p > \frac{n}{n-2}$. However we should modify the estimates of the solution $u_{\ep}$ on the annulus $\partial B(x_{\ep}, 2d_{\ep})$. This is due to the fact that the function $V^p$ is integrable for $p > \frac{n}{n-2}$ but not integrable for $p = \frac{n}{n-2}$ in view of the estimate \eqref{eq-decay}. We obtain the desired estimate in the following lemma.
\begin{lem}\label{lem-g2}
Suppose $p=\frac{n}{n-2}$. Assume that $\{(u_{\ep},v_{\ep})\}_{\ep >0}$ is a sequence of solutions to \eqref{eq-main} of type $(ME)$ and $\lim_{\ep \rightarrow 0} d_{\ep} =0$. Then the following estimates hold.  For $x \in \partial B(x_{\ep}, 2d_{\ep})$ we have
\begin{equation}\label{eq-g2-20}
u_{\epsilon}(x)= K_{\epsilon} \lambda_{\epsilon}^{-\frac{n}{p+1}} G(x,x_{\epsilon}) + O (d_{\epsilon}^{-(n-2)} \lambda_{\epsilon}^{-\frac{n}{p+1}})
\end{equation}
and
\begin{equation}\label{eq-g2-21}
\nabla u_{\epsilon}(x)= K_{\epsilon} \lambda_{\epsilon}^{-\frac{n}{p+1}}\nabla G(x,x_{\epsilon}) + O (d_{\epsilon}^{-(n-1)} \lambda_{\epsilon}^{-\frac{n}{p+1}}),
\end{equation}
where $K_{\ep}$ is a positive constant satisfying
\begin{equation*}
c_1 \log N_{\epsilon} \leq K_{\epsilon} \leq c_2 \log N_{\epsilon}\quad \forall \epsilon >0
\end{equation*}
for some $c_2 >c_1 >0$ independent of $\ep >0$.
\end{lem}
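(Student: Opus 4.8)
The plan is to follow the Green's-representation scheme of the proof of Lemma~\ref{lem-g10}, with one structural modification forced by the borderline exponent: since $p(n-2)=n$, the decay \eqref{eq-decay} gives $V^p(\zeta)\sim a^p|\zeta|^{-n}$, so $V^p$ fails (logarithmically) to be integrable at infinity and $\int_\Omega v_{\epsilon}^p$ cannot serve as the coefficient. Instead I would use the \emph{local} mass. Starting from $u_{\epsilon}(x)=\int_{\Omega}G(x,y)v_{\epsilon}^p(y)\,dy$, I split the integral over $B(x_{\epsilon},2d_{\epsilon})$ and its complement and set
\[
K_{\epsilon}:=\lambda_{\epsilon}^{\frac{n}{p+1}}\int_{B(x_{\epsilon},2d_{\epsilon})}v_{\epsilon}^p(y)\,dy=\int_{B(0,2N_{\epsilon})}\widetilde{v}_{\epsilon}^{\,p}(\zeta)\,d\zeta .
\]
Writing the first piece as $G(x,x_{\epsilon})\lambda_{\epsilon}^{-\frac{n}{p+1}}K_{\epsilon}+\int_{B(x_{\epsilon},2d_{\epsilon})}\big(G(x,y)-G(x,x_{\epsilon})\big)v_{\epsilon}^p(y)\,dy$ isolates the leading term of \eqref{eq-g2-20}; it then remains to show that the correction on $B(x_{\epsilon},2d_{\epsilon})$ and the integral over $\Omega\setminus B(x_{\epsilon},2d_{\epsilon})$ are both $O\big(d_{\epsilon}^{-(n-2)}\lambda_{\epsilon}^{-\frac{n}{p+1}}\big)$. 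For \eqref{eq-g2-21} one differentiates the same decomposition in $x$ and repeats the estimates with one extra negative power of $d_{\epsilon}$.

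The crux is the two-sided bound $c_1\log N_{\epsilon}\le K_{\epsilon}\le c_2\log N_{\epsilon}$. The upper bound is immediate from Lemma~\ref{lem-gu}: $K_{\epsilon}\le C\int_{B(0,2N_{\epsilon})}(1+|\zeta|)^{-n}\,d\zeta\le C+C|S^{n-1}|\log(2N_{\epsilon})\le c_2\log N_{\epsilon}$ for $\epsilon$ small. For the lower bound I would use the exact potential representation $\widetilde{v}_{\epsilon}(\zeta)=\int_{\Omega_{\epsilon}}K_{(-\Delta_{\Omega_{\epsilon}})^{-1}}(\zeta,w)\,\widetilde{u}_{\epsilon}^{\,q_{\epsilon}}(w)\,dw$ (there is no harmonic part, since $\widetilde{v}_{\epsilon}=0$ on $\partial\Omega_{\epsilon}$), keep only $w\in B(0,1)$, and exploit that $\mathrm{dist}(0,\partial\Omega_{\epsilon})=4N_{\epsilon}$, so $B(0,4N_{\epsilon})\subset\Omega_{\epsilon}$. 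By domain monotonicity $K_{(-\Delta_{\Omega_{\epsilon}})^{-1}}\ge K_{(-\Delta_{B(0,4N_{\epsilon})})^{-1}}$, and the explicit Green function of the ball satisfies $K_{(-\Delta_{B(0,4N_{\epsilon})})^{-1}}(\zeta,w)\ge c\,|\zeta|^{-(n-2)}$ for $1\le|\zeta|\le N_{\epsilon}$ and $|w|\le 1$, with $c$ depending only on $n$. Since $\int_{B(0,1)}\widetilde{u}_{\epsilon}^{\,q_{\epsilon}}\to\int_{B(0,1)}U^q>0$, this yields the \emph{uniform} (in $\epsilon$) lower bound $\widetilde{v}_{\epsilon}(\zeta)\ge c\,|\zeta|^{-(n-2)}$ on $1\le|\zeta|\le N_{\epsilon}$, whence $K_{\epsilon}\ge c^p|S^{n-1}|\int_1^{N_{\epsilon}}r^{-(n-2)p+n-1}\,dr=c^p|S^{n-1}|\log N_{\epsilon}$ because $(n-2)p=n$. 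Upgrading the $C^2_{\mathrm{loc}}$ convergence $\widetilde{v}_{\epsilon}\to V$---which by itself controls $\widetilde{v}_{\epsilon}$ only on fixed compact sets---to a pointwise lower bound out to the growing scale $N_{\epsilon}$ is the main obstacle; this is precisely where Lemma~\ref{lem-bb} enters, and the rest is routine.

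It remains to estimate the error terms. In $\int_{B(x_{\epsilon},2d_{\epsilon})}\big(G(x,y)-G(x,x_{\epsilon})\big)v_{\epsilon}^p(y)\,dy$, on the inner ball $B(x_{\epsilon},d_{\epsilon})$ the mean value theorem gives $|G(x,y)-G(x,x_{\epsilon})|\le C|y-x_{\epsilon}|\,d_{\epsilon}^{-(n-1)}$, and after rescaling this contributes $Cd_{\epsilon}^{-(n-1)}\lambda_{\epsilon}^{-\frac{n}{p+1}-1}\int_{B(0,N_{\epsilon})}|\zeta|\,\widetilde{v}_{\epsilon}^{\,p}(\zeta)\,d\zeta$. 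The genuinely new point, in contrast with Lemma~\ref{lem-g10}, is that $\int_{B(0,N_{\epsilon})}|\zeta|\,\widetilde{v}_{\epsilon}^{\,p}(\zeta)\,d\zeta\le C\int_{B(0,N_{\epsilon})}(1+|\zeta|)^{1-n}\,d\zeta\le CN_{\epsilon}$ is merely $O(N_{\epsilon})$, not $o(N_{\epsilon})$, which is exactly why \eqref{eq-g2-20} and \eqref{eq-g2-21} are stated with $O(\cdot)$ rather than $o(\cdot)$; the resulting contribution is $O\big(d_{\epsilon}^{-(n-2)}\lambda_{\epsilon}^{-\frac{n}{p+1}}\big)$. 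On the annular region where $|y-x_{\epsilon}|\sim d_{\epsilon}$ (so that $v_{\epsilon}(y)\le C\lambda_{\epsilon}^{\frac{n}{p+1}}N_{\epsilon}^{-(n-2)}$ and $\int_{|x-y|\le 8d_{\epsilon}}|x-y|^{-(n-2)}\,dy\le Cd_{\epsilon}^{2}$) and on the far region $\Omega\setminus B(x_{\epsilon},4d_{\epsilon})$ (where one rescales and uses $\int_{|\zeta|\ge 4N_{\epsilon}}|\zeta|^{-(n-2)}V^p(\zeta)\,d\zeta\le CN_{\epsilon}^{-(n-2)}$), the estimates are carried out exactly as in the proof of Lemma~\ref{lem-g10}, and each term is again $O\big(d_{\epsilon}^{-(n-2)}\lambda_{\epsilon}^{-\frac{n}{p+1}}\big)$ once $(n-2)p=n$ is used. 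Differentiating the whole decomposition in $x$ and invoking $|\nabla_xG(x,y)|\le C|x-y|^{-(n-1)}$ together with $|\nabla_xG(x,y)-\nabla_xG(x,x_{\epsilon})|\le C|y-x_{\epsilon}|\,d_{\epsilon}^{-n}$ produces \eqref{eq-g2-21} with error $O\big(d_{\epsilon}^{-(n-1)}\lambda_{\epsilon}^{-\frac{n}{p+1}}\big)$, completing the proof.
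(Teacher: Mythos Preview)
Your proposal is correct and follows essentially the same scheme as the paper: define $K_\epsilon$ as the local mass of $v_\epsilon^{\,p}$, bound it above via Lemma~\ref{lem-gu} and below by first establishing the pointwise lower bound $\widetilde v_\epsilon(\zeta)\ge c|\zeta|^{2-n}$ on a ball of radius $\sim N_\epsilon$, and estimate the inner/annular/far error pieces exactly as in Lemma~\ref{lem-g10} (with $O$ in place of $o$ for the reason you identify). The only difference is in how the Green-function lower bound is obtained: the paper uses the scaling identity $G_{\Omega_\epsilon}(\zeta,w)=\lambda_\epsilon^{-(n-2)}G(\lambda_\epsilon^{-1}\zeta+x_\epsilon,\lambda_\epsilon^{-1}w+x_\epsilon)$ together with the bound \eqref{eq-h1-4} on $H$ from Lemma~\ref{lem-h-asym}, whereas you use domain monotonicity against the explicit Green function of $B(0,4N_\epsilon)$---a slightly more self-contained variant that avoids Lemma~\ref{lem-h-asym} altogether.
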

\begin{proof}
From \eqref{eq-main} we have
\begin{equation}\label{eq-g2-1}
\begin{split}
u_{\epsilon}(x) &= \int_{B(x_{\epsilon}, 4d_{\epsilon})} G(x,y) v_{\epsilon}^{p} (y) dy + \int_{B^c (x_{\epsilon}, 4d_{\epsilon})} G(x,y) v_{\epsilon}^p (y) dy.
\\
& = G(x,x_{\epsilon}) \int_{B(x_{\epsilon}, 4d_{\epsilon})} v_{\epsilon}^{p} (y) dy + \int_{B(x_{\epsilon}, 4d_{\epsilon})} (G(x,y)-G(x,x_{\epsilon})) v_{\epsilon}^p (y) dy
\\
&\qquad + \int_{B^c (x_{\epsilon}, 4d_{\epsilon})} G(x,y) v_{\epsilon}^p (y) dy.
\end{split}
\end{equation}
We first estimate the integration $\int_{B(x_{\ep}, 4d_{\ep})} v_{\ep}^{p}(y)dy$. An upper estimate will follows by using \eqref{eq-bound} and \eqref{eq-decay} as before. To find a sharp lower estimate, we are going to find a lower bound of $v_{\ep}$ on $B(x_{\ep}, c_0 d_{\ep})$ for a small constant $c_0 >0$ independent of $\ep >0$. For this purpose, we recall from Section 3 the notations for the scaled domain $\Omega_{\epsilon}:= \lambda_{\epsilon}(\Omega-x_{\epsilon})$ and normalized solutions
\begin{equation*}
\widetilde{u}_{\epsilon}(x):= \lambda_{\epsilon}^{-\frac{n}{q_{\epsilon}+1}} u_{\epsilon}(\lambda_{\epsilon}^{-1} x + x_{\epsilon}),
\quad \textrm{and}\quad \widetilde{v}_{\epsilon}(x):= \lambda_{\epsilon}^{-\frac{n}{p+1}} v_{\epsilon}(\lambda_{\epsilon}^{-1} x + x_{\epsilon}),
\quad \textrm{for}~x \in \Omega_{\epsilon}.
\end{equation*}
For our aim, it suffices to obtain a lower bound of $\widetilde{v}_{\ep}$ on $B(0, c_0 N_{\ep})$. To get it, we begin with the following formula of $\widetilde{v}_{\ep}$ from \eqref{eq-ext-sol};
\begin{equation}\label{eq-g2-12}
\widetilde{v}_{\ep}(x) = \int_{\Omega_{\ep}} G_{\Omega_{\ep}}(x,y) \widetilde{u}_{\ep}^{q}(y) dy,
\end{equation}
where $G_{\Omega_{\ep}}$ is Green's function of the problem
\begin{equation*}
\left\{ \begin{aligned} -\Delta u &= f \quad \textrm{in}~\Omega_{\ep},
\\
u&=0\quad \textrm{on}~\partial \Omega_{\ep}.
\end{aligned}
\right.
\end{equation*}
By scaling, we have
\begin{equation}\label{eq-g2-11}
\begin{split}
G_{\Omega_\ep} (x,y) &= \lambda_{\ep}^{-(n-2)}G(\lambda_{\ep}^{-1} x + x_{\ep}, \lambda_{\ep}^{-1} y + x_{\ep})
\\
&= \frac{c_n}{|x-y|^{n-2}} + \lambda_n^{-(n-2)}H(\lambda_{\ep}^{-1} x + x_{\ep}, \lambda_{\ep}^{-1} y + x_{\ep}).
\end{split}
\end{equation}
Since $\frac{N_\ep}{\lambda_{\ep}}= d_{\ep}$ the estimate \eqref{eq-h1-4} implies that
\begin{equation*}
\sup_{x,y \in B(0, N_{\ep})} H(\lambda_{\ep}^{-1} x + x_{\ep}, \lambda_{\ep}^{-1} y + x_{\ep}) \leq C d_{\ep}^{-(n-2)}.
\end{equation*}
Using this and $N_{\ep} = \lambda_{\ep} d_{\ep}$ we obtain from \eqref{eq-g2-11} the following estimate
\begin{equation*}
G_{\Omega_\ep}(x,y) =\frac{c_n}{|x-y|^{n-2}} + O( N_{\ep}^{-(n-2)})
\geq \frac{C}{|x|^{n-2}},\quad y \in B^n (0,1)\quad \textrm{and}\quad x \in B^n (0, c_0 N_{\ep}) \setminus B^n (0,2), 
\end{equation*}
where $c_0 >0$ is a small constant independent of $\ep >0$. Injecting this estimate into \eqref{eq-g2-12} we get
\begin{equation*}
\begin{split}
\widetilde{v}_{\ep} (x) &\geq \frac{C}{|x|^{n-2}} \int_{B^n (0,1)} \widetilde{u}_{\ep}^{q}(y) dy 
\\
&\geq \frac{C}{|x|^{n-2}} \quad \textrm{for}~ x \in B^n (0, c_0 N_{\ep}) \setminus B^n (0,2),
\end{split}
\end{equation*}
where we used the fact that $\widetilde{u}_{\ep}$ converges to $U$ in $C^0 (B^n (0,1))$ as $\ep \rightarrow 0$. Using this we have
\begin{equation*}
\begin{split}
\int_{B^n (x_{\ep}, 4d_{\ep})} \lambda_{\ep}^{\frac{n}{p+1}} v_{\ep}^{p}(y) dy & = \int_{B^n (0, 4N_{\ep})} \widetilde{v}_{\ep}^{p}(y) dy
\\
& \geq \int_{B^n (0, c_0 N_{\ep}) \setminus B^n (0,2)} \frac{C}{|y|^n} dy
\\
& \geq C \log (N_{\ep}).
\end{split}
\end{equation*}
 Given this lower bound, using the upper estimate \eqref{eq-bound} of $v_{\ep}$ given by \eqref{eq-decay} and \eqref{eq-bound}, we can find a large value $C>1$ such that
\begin{equation*}
\frac{\log (N_{\epsilon})}{C} \leq\int_{B(x_{\epsilon}, 4d_{\epsilon})} \lambda_{\epsilon}^{\frac{n}{p+1}}v_{\epsilon}^p (y) dy  \leq C \log (N_{\epsilon}).
\end{equation*}
Now, it remains to estimate the last two integrations in \eqref{eq-g2-1}. First, we decompose
\begin{equation*}
\begin{split}
 \int_{B(x_{\epsilon}, 4d_{\epsilon})} (G(x,y)-G(x,x_{\epsilon})) v_{\epsilon}^p (y) dy&= \int_{B(x_{\epsilon}, d_{\epsilon})} dy + \int_{B(x_{\epsilon}, 4d_{\epsilon})\setminus B(x_{\epsilon}, d_{\epsilon})} dy 
\\
&:= I_1 (x)+ I_2 (x).
\end{split}
\end{equation*}
In the same way for \eqref{eq-g1-2} and \eqref{eq-g1-3}, one can estimate $I_1 (x)$ and $I_2 (x)$ as 
\begin{equation}\label{eq-g2-2}
I_1 (x)+ I_2 (x)\leq C  \lambda_{\epsilon}^{\frac{n}{q+1}} N_{\epsilon}^{-(n-2)}.
\end{equation}
Lastly we evaluate the last term of \eqref{eq-g2-1} as
\begin{equation*}
\begin{split}
\int_{B^c (x_{\epsilon}, 4d_{\epsilon})} G(x,y) v_{\epsilon}^p (y) dy &=\int_{B^{c}(0, 4d_{\ep})} G(x,x_{\ep} +y) v_{\ep}^{p}(x_{\ep}+y) dy
\\
&\leq C \int_{B^c (0,4d_{\epsilon})} \frac{1}{|y|^{n-2}} \frac{\lambda_{\epsilon}^{\frac{pn}{p+1}}}{|\lambda_{\epsilon}y|^n} dy \leq C \lambda_{\epsilon}^{\frac{pn}{p+1}} \lambda_{\epsilon}^{-n} d_{\epsilon}^{-(n-2)}
\end{split}
\end{equation*}
where we used that $|x-(x_{\ep}+y)| \geq |y|-|x-x_{\ep}| \geq |y|-2d_{\ep} \geq \frac{|y|}{2}$ for $y \in B^{c}(0, 4d_{\ep})$. Inserting the estimate and \eqref{eq-g2-2} into \eqref{eq-g2-1}, we get the desired estimate of $u_{\epsilon}$. A similar argument can be applied to find the estimate of $\nabla u_{\epsilon}$. The proof is finished.
\end{proof}
\begin{proof}[Proof of Theorem \ref{thm-1} for the case $p = \frac{n}{n-2}$] Let $d_{\ep} = d (x_{\ep}, \partial \Omega)$. Then the task is to show that  $\inf_{\ep >0} d_{\ep} >0$. For this aim, we argue by contradiction as in the case $p > \frac{n}{n-2}$. Namely, we assume the contrary that the maximum point $x_{\epsilon}$ approaches to the boundary $\partial \Omega$ as $\epsilon \rightarrow 0$, i.e., $d_{\epsilon}\rightarrow 0$ as $\ep \rightarrow 0$. 
\

We begin with reminding from Lemma \ref{eq-de} that $d_{\epsilon} = \frac{N_{\epsilon}}{\lambda_{\epsilon}}$ with $N_{\epsilon} \rightarrow \infty$. Now, for each $1 \leq j \leq n$, we define the following values 
\begin{equation*}
\begin{split}
L_{\epsilon}^j:&=-\int_{\partial D_{\epsilon}} \left( \frac{\partial u_{\epsilon}}{\partial \nu} \frac{\partial v_{\epsilon}}{\partial x_j} + \frac{\partial v_{\epsilon}}{\partial \nu} \frac{\partial u_{\epsilon}}{\partial x_j} \right) d S_x + \int_{\partial D_{\epsilon}}(\nabla u_{\epsilon} \cdot \nabla v_{\epsilon}) \nu_j dS_x,
\\
 R_{\epsilon}^j:&= \frac{1}{p+1} \int_{\partial D_{\epsilon}} v_{\epsilon}^{p+1} \nu_j dS_x + \frac{1}{q_{\epsilon}+1} \int_{\partial D_{\epsilon}} u_{\epsilon}^{q_{\epsilon}+1} \nu_j dS_x.
\end{split}
\end{equation*}
Applying Lemma \ref{lem-poho} to $(u_{\ep}, v_{\ep})$ with $D_{\epsilon} = B(x_{\epsilon}, 2 d_{\epsilon})$ we get the following identity
\[L_{\epsilon}^j = R_{\epsilon}^j.\]
To compute the value $L_{\ep}^j$,  as in \eqref{eq-g1-14}, we first put the estimates of $u_{\ep}$ and $v_{\ep}$ given in \eqref{eq-g2-21} and \eqref{eq-g1-8} to get 
\begin{equation*}
\begin{split}
L_j^{\epsilon}&=-\lambda_{\epsilon}^{-(n-2+\epsilon)} A_U A_V I(2d_{\epsilon}) + o (d_{\epsilon}^{-(n-1)} \lambda_{\epsilon}^{-(n-2)}),
\end{split}
\end{equation*}
where we have set
\begin{equation*}
I(r):= \int_{\partial B(x_{\epsilon},r)} 2 \frac{\partial G}{\partial \nu} (x,x_{\epsilon}) \frac{\partial}{\partial x_j} G(x,x_{\epsilon}) - (|\nabla G(x,x_{\epsilon})|^2 \nu_j) dS_x\quad \textrm{for}\quad r>0.  
\end{equation*}
Similarly to \eqref{eq-g1-45} we compute the value of $I(r)$ to reach the following identity
\begin{equation*}
L_{\epsilon}^j = - \lambda_{\epsilon}^{-(n-2+\epsilon)} c_n A_U K_{\ep} |S_{n-1}| 2(n-2) \frac{\partial H}{\partial x_j} (x_{\epsilon},~x_{\epsilon}) + O (d_{\epsilon}^{-(n-1)} \lambda_{\epsilon}^{-(n-2)}).
\end{equation*}
Find $(a_1, \cdots, a_n) \in S^{n-1}$ such that $n_{x_{\epsilon}}= (a_1,\cdots, a_n)$. Then, similarly to \eqref{eq-g1-5}, we apply the inequality \eqref{eq-hn} to obtain
\begin{equation}\label{eq-g1-15}
\begin{split}
\sum_{j=1}^{n} a_j L_j^{\epsilon}& \geq C\log (N_{\epsilon})\lambda_{\epsilon}^{-(n-2)}  \frac{\partial H}{\partial n_{x_\epsilon}} (x_{\epsilon}, x_{\epsilon}) + O (\lambda_{\epsilon}^{-(n-2)} d_{\epsilon}^{-(n-1)})
\\
& \geq C \log (N_{\epsilon})\lambda_{\epsilon}^{-(n-2)} d_{\epsilon}^{-(n-1)} = C\log (N_{\epsilon})\lambda_{\epsilon}^1 N_{\epsilon}^{-(n-1)}.
\end{split}
\end{equation}
Next we estimate $R_j^{\epsilon}$. From \eqref{eq-bound} and \eqref{eq-decay} we have the following estimate
\begin{equation*}
v_{\epsilon} (x) \leq C\lambda_{\epsilon}^{\frac{n}{p+1}} V(\lambda_{\epsilon}(x-x_{\epsilon})) \leq C \lambda_{\epsilon}^{\frac{n}{p+1}} N_{\epsilon}^{-(n-2)}\quad\forall~ x \in \partial B(x_{\ep}, 2d_{\epsilon}). 
\end{equation*}
Using this we get
\begin{equation}\label{eq-g1-9}
\begin{split}
\left| \int_{\partial D_{\epsilon}} v_{\epsilon}^{p+1} \nu_j dS_x \right|& \leq C|\partial D_{\epsilon}| \lambda_{\epsilon}^n N_{\epsilon}^{-(n-2)(p+1)}
\\
& \leq C d_{\epsilon}^{(n-1)} \lambda_{\epsilon}^{n} N_{\epsilon}^{-(n-2)(p+1)}
\\
& = C \left( \frac{N_{\epsilon}}{\lambda_{\epsilon}}\right)^{(n-1)} \lambda_{\epsilon}^n N_{\epsilon}^{-(n-2)(p+1)} = \lambda_{\epsilon} N_{\epsilon}^{(n-1)-(n-2)(p+1)}.
\end{split}
\end{equation}
Similarly we have $u_{\epsilon}(x) \leq C\lambda_{\epsilon}^{\frac{n}{q_{\epsilon}+1}} N_{\epsilon}^{-(n-2)}$, and consequently,
\begin{equation}\label{eq-g1-10}
\left| \int_{\partial D_{\epsilon}} u_{\epsilon}^{q_{\epsilon}+1}(x) \nu_j dS_x \right| \leq C \lambda_{\epsilon} N_{\epsilon}^{(n-1) - (n-2) (q_{\epsilon}+1)}.
\end{equation}
Collecting the estimates \eqref{eq-g1-9} and \eqref{eq-g1-10} with the fact that $q_{\epsilon}>p=\frac{n}{n-2}$, we get
\begin{equation}\label{eq-g1-6}
|R_j^{\epsilon}| \leq C \lambda_{\epsilon} N_{\epsilon}^{(n-1) - (n-2)(p+1)} = C_{\lambda_{\epsilon}} N_{\epsilon}^{-(n-1)}.
\end{equation}
Finally, combining \eqref{eq-g1-15} and \eqref{eq-g1-6}, we find the inequality
\begin{equation*}
\begin{split}
C\log (N_{\epsilon}) \lambda_{\epsilon} N_{\epsilon}^{-(n-1)} &\leq \sum_{j=1}^{n} a_j L_j^{\ep}
\\
&= \sum_{j=1}^{n} a_j R_j^{\ep}
\\
&\leq C \lambda_{\epsilon} N_{\epsilon}^{-(n-1)},
\end{split}
\end{equation*}
which is a contradiction because $N_{\epsilon} \rightarrow \infty$. Therefore, the blow up point $x_{\epsilon}$ is away from the boundary $\partial \Omega$ uniformly in $\ep>0$, and hence the maximum point $x_{\ep}$ converges to an interior point of $\Omega$ up to a subsequence. The proof is finished.
\end{proof}

\section{The case $p < \frac{n}{n-2}$}
This section is devoted to prove Theorem \ref{thm-2}. Recall that $\{(u_{\ep}, v_{\ep})\}_{\ep>0}$ is a sequence of solutions of type $(ME)$ to the problem  
\begin{equation}\label{eq-main-2}
\left\{ \begin{array}{ll} -\Delta u_{\epsilon} = v_{\epsilon}^{p}&\textrm{in}~\Omega,
\\
-\Delta v_{\epsilon} = u_{\epsilon}^{q_{\epsilon}}&\textrm{in}~\Omega,
\\
u_{\epsilon}=v_{\epsilon}=0&\textrm{in}~\partial \Omega.
\end{array}
\right.
\end{equation}
As in the previous sections, we take  the value $\lambda_{\epsilon}>0$ and the point $x_{\epsilon} \in \Omega$ such that
\begin{equation*}
\lambda_{\epsilon}=\max_{x \in \Omega} \,\max \{ v_{\epsilon}^{\frac{p+1}{n}}(x), ~u_{\epsilon}^{\frac{q_{\epsilon}+1}{n}} (x) \} = \max \{ v_{\epsilon}^{\frac{p+1}{n}}(x_{\epsilon}),~u_{\epsilon}^{\frac{q_{\epsilon}+1}{n}}(x_{\epsilon})\}.
\end{equation*}
Also we denote $d_{\epsilon} := \frac{1}{4}\textrm{dist}(x_{\epsilon}, \partial \Omega)$. Then we see from Lemma \ref{lem-bb} that
\begin{equation*}
d_{\epsilon} = \frac{N_{\epsilon}}{\lambda_{\epsilon}}\quad \textrm{with}\quad \lim_{\ep \rightarrow 0} N_{\epsilon} = \infty.
\end{equation*}
In addition, we set $\Omega_{\epsilon}:= \lambda_{\epsilon}(\Omega-x_{\epsilon})$ and normalize the solutions as
\begin{equation*}
\widetilde{u}_{\epsilon}(x):= \lambda_{\epsilon}^{-\frac{n}{q_{\epsilon}+1}} u_{\epsilon}(\lambda_{\epsilon}^{-1} x + x_{\epsilon}),
\quad \textrm{and}\quad \widetilde{v}_{\epsilon}(x):= \lambda_{\epsilon}^{-\frac{n}{p+1}} v_{\epsilon}(\lambda_{\epsilon}^{-1} x + x_{\epsilon}),
\quad \textrm{for}~x \in \Omega_{\epsilon}.
\end{equation*}
To prove Theorem \ref{thm-2} we will apply the same strategy used for Theorem \ref{thm-1}. Namely, we shall find a contradiction by exploiting the Pohozaev type identity if we assume the maximum point approaches to the boundary. In this case, it is more difficult to get a precise estimate of $u_{\ep}(x)$ for $x \in \partial B(x_{\ep}, 2d_{\ep})$. For this estimate we first need to obtain sharp estimates of the solution $v_{\ep}(y)$ for all $y \in \Omega$. This will be achieved in Lemma \ref{lem-v} below. 
\

The assumption $d_{\ep} \rightarrow 0$ makes the analysis much more delicate. Hence, for readers' understanding, we first look at the  case which assumes that the maximum point converge to an interior point. 
\begin{lem}
Suppose $p \in \left[1 , \frac{n}{n-2}\right)$. Assume that the maximum point $x_{\ep}$ converges to an interior point $x_0 \in \Omega$. Then the following holds;
\begin{equation*}
\left\{\begin{aligned}
\lim_{\epsilon \rightarrow 0} \lambda_{\epsilon}^{\frac{n}{q+1}} v_{\epsilon}(y) &= A_U G(y,x_0)\quad \textrm{in}~C^0 (\Omega \setminus \{x_0\}),
\\
\lim_{\epsilon \rightarrow 0} \lambda_{\epsilon}^{\frac{np}{q+1}} u_{\epsilon}(x) &= A_U^p \widetilde{G}(x,x_0)\quad \textrm{in}~C^0 (\Omega \setminus \{x_0\}),\end{aligned}\right.
\end{equation*}
where the constant $A_U$ is defined by $A_U = \int_{\mathbb{R}^n} U^q (y) dy$.
\end{lem}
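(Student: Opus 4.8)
We shall prove the lemma by feeding the Green's representations
\[
v_{\ep}(y)=\int_{\Omega}G(y,z)\,u_{\ep}^{q_{\ep}}(z)\,dz,\qquad
u_{\ep}(x)=\int_{\Omega}G(x,z)\,v_{\ep}^{p}(z)\,dz
\]
into the argument and passing to the limit in them one after the other, following the same scheme as in the proof of Lemma~\ref{lem-g10}; the simplification is that here we only need a locally uniform limit on $\Omega\setminus\{x_{0}\}$, so the concentration near $x_{0}$ is harmless. Throughout one uses that $\lambda_{\ep}^{\ep}\le C$ (the estimate of \cite{G} recalled above) and that in fact $\lambda_{\ep}^{\ep}\to1$, so the exponents $\tfrac{n}{q_{\ep}+1}$ and $\tfrac{n}{q+1}=\tfrac{n}{q_{\ep}+1}-n\ep$ may be interchanged in the limit.

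First we treat $v_{\ep}$. Fix a compact $K\subset\Omega\setminus\{x_{0}\}$, so $\textrm{dist}(K,x_{\ep})\ge\delta_{0}>0$ for $\ep$ small, and write
\[
v_{\ep}(y)=G(y,x_{\ep})\int_{\Omega}u_{\ep}^{q_{\ep}}
+\int_{\Omega}\big(G(y,z)-G(y,x_{\ep})\big)u_{\ep}^{q_{\ep}}(z)\,dz .
\]
By rescaling and the $C^{0}_{loc}$-convergence $\widetilde u_{\ep}\to U$, together with the majorant $\widetilde u_{\ep}\le CU$ of Lemma~\ref{lem-gu} and the decay \eqref{eq-decay} (so that $U^{q}\in L^{1}(\mr^{n})$, i.e. $(p(n-2)-2)q>n$), the dominated convergence theorem gives $\lambda_{\ep}^{n/(q_{\ep}+1)}\int_{\Omega}u_{\ep}^{q_{\ep}}\to A_{U}$. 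For the second term we split $\Omega$ into $B(x_{\ep},r)$ and its complement with $r<\tfrac12\textrm{dist}(x_{0},y)$: on $B(x_{\ep},r)$ the map $z\mapsto G(y,z)$ is smooth uniformly in $\ep$, hence $|G(y,z)-G(y,x_{\ep})|\le C_{r}|z-x_{\ep}|$, and with \eqref{eq-bound} this term is $o\big(\lambda_{\ep}^{-n/(q_{\ep}+1)}\big)$; on the complement $\sup u_{\ep}^{q_{\ep}}$ carries a power of $\lambda_{\ep}$ that beats $\lambda_{\ep}^{-n/(q_{\ep}+1)}$ precisely because $(p(n-2)-2)q_{\ep}>n$, while $G(y,\cdot)\in L^{1}(\Omega)$ absorbs the rest. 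Since $G(y,x_{\ep})\to G(y,x_{0})$ uniformly on $K$, this gives $\lambda_{\ep}^{n/(q+1)}v_{\ep}\to A_{U}G(\cdot,x_{0})$ in $C^{0}(\Omega\setminus\{x_{0}\})$.

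Next we treat $u_{\ep}$. Put $f_{\ep}:=\lambda_{\ep}^{n/(q+1)}v_{\ep}$, so $\lambda_{\ep}^{np/(q+1)}u_{\ep}(x)=\int_{\Omega}G(x,z)f_{\ep}^{p}(z)\,dz$, and recall $f_{\ep}\to A_{U}G(\cdot,x_{0})$ locally uniformly on $\Omega\setminus\{x_{0}\}$. The decisive point is an $\ep$-uniform global majorant for $f_{\ep}$: by Lemma~\ref{lem-gu}, $v_{\ep}(z)\le C\lambda_{\ep}^{n/(p+1)}V(\lambda_{\ep}(z-x_{\ep}))$ with $V(w)\le C(1+|w|)^{-(n-2)}$ from \eqref{eq-decay}, and since $\tfrac{n}{q+1}+\tfrac{n}{p+1}=n-2$ this yields
\[
f_{\ep}(z)\le C\Big(\frac{\lambda_{\ep}}{1+\lambda_{\ep}|z-x_{\ep}|}\Big)^{n-2}\le\frac{C}{|z-x_{\ep}|^{n-2}}\qquad\text{for all }z\in\Omega .
\]
Hence $f_{\ep}^{p}(z)\le C|z-x_{\ep}|^{-(n-2)p}$ with $(n-2)p<n$, so $\{f_{\ep}^{p}\}$ is uniformly integrable and $x_{\ep}\to x_{0}$. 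Splitting $\int_{\Omega}G(x,z)f_{\ep}^{p}(z)\,dz$ into small balls about $x_{0}$ and about $x$ plus a remainder, the two small-ball contributions are $O(\rho^{n-(n-2)p})+O(\rho^{2})$ uniformly in $\ep$, while on the remainder $f_{\ep}\to A_{U}G(\cdot,x_{0})$ uniformly; letting $\ep\to0$ and then $\rho\to0$ gives
\[
\lim_{\ep\to0}\lambda_{\ep}^{np/(q+1)}u_{\ep}(x)=A_{U}^{p}\int_{\Omega}G(x,z)G^{p}(z,x_{0})\,dz=A_{U}^{p}\widetilde G(x,x_{0}),
\]
the last identity being the Green's representation of the solution of \eqref{eq-GG}, with the convergence uniform on compact subsets of $\Omega\setminus\{x_{0}\}$. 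The main obstacle is exactly this last step: the $\ep$-uniform bound $f_{\ep}(z)\le C|z-x_{\ep}|^{-(n-2)}$ — which rests on $\tfrac{n}{q+1}+\tfrac{n}{p+1}=n-2$ and on Lemma~\ref{lem-gu} — together with using $(n-2)p<n$ to pass to the limit under the integral and recognize the limit as $\widetilde G$; by contrast, the limit for $v_{\ep}$ is a routine adaptation of the computation already performed for Lemma~\ref{lem-g10}.
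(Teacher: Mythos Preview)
Your proof is correct and follows essentially the same route as the paper: Green's representation for $v_{\ep}$ together with dominated convergence (using $\widetilde u_{\ep}\le CU$ and $U^{q}\in L^{1}$), and then Green's representation for $u_{\ep}$ together with the integrability gained from $p<\tfrac{n}{n-2}$. The one noteworthy difference is that for the $u_{\ep}$ limit the paper simply passes from the $C^{0}(\Omega\setminus\{x_{0}\})$ convergence of $-\Delta\big(\lambda_{\ep}^{np/(q+1)}u_{\ep}\big)$ to the Green integral without detailing the control near $x_{0}$, whereas you supply the explicit $\ep$-uniform majorant $f_{\ep}(z)\le C|z-x_{\ep}|^{-(n-2)}$ (via $\tfrac{n}{q+1}+\tfrac{n}{p+1}=n-2$ and Lemma~\ref{lem-gu}) and a clean three-region splitting; this makes the passage to the limit fully rigorous and is in fact the bound the paper itself uses later in Lemma~\ref{lem-v}.
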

\begin{proof}
We recall from \eqref{eq-decay} that $U(x) \leq C|x|^{-[p(n-2)-2]}$. Also we note that $q (p(n-2)-2) = pq(n-2) -2q =(n+2) + 2p >n$. Thus we have $U^q \in L^1 (\mathbb{R}^N)$. By Lemma \ref{lem-gu} we have $\widetilde{u}_{\ep} (x)\leq CU(x)$ for all $x \in \Omega_{\ep}$. Thus we may apply the dominated convergence theorem to yield
\begin{equation}\label{eq-v-0}
\lim_{\epsilon \rightarrow 0} \int_{\Omega} \lambda_{\epsilon}^{\frac{n}{q_{\ep}+1}} u_{\epsilon}^{q_{\epsilon}} (z) dz =\lim_{\ep \rightarrow 0} \int_{\Omega_{\ep}} U_{\ep}^{q_{\ep}}(x) dx= \int_{\mathbb{R}^n} U^q (x) dx = A_U.
\end{equation}
Then, as the blow up point $x_{\epsilon}$ converges to an interior point $x_0 \in \Omega$, we may deduce from \eqref{eq-main-2} that
\begin{equation*}
\begin{split}
\lim_{\epsilon \rightarrow 0} \lambda_{\epsilon}^{\frac{N}{q+1}} v_{\epsilon}(y) &= \lim_{\ep \rightarrow 0} \int_{\Omega} G(y,z) \lambda_{\ep}^{\frac{n}{q+1}} u_{\ep}^{q_{\ep}}(z) dz
\\
&= A_U G(y,x_0)\quad \textrm{in}~C^0 (\Omega \setminus \{x_0\}).
\end{split}
\end{equation*}
Using this and also \eqref{eq-main-2} again, we get
\begin{equation*}
\lim_{\epsilon \rightarrow 0} (-\Delta) (\lambda_{\epsilon}^{\frac{np}{q+1}} u_{\epsilon})(z) = \lim_{\epsilon \rightarrow 0} \lambda_{\epsilon}^{\frac{np}{q+1}} v_{\epsilon}^{p} (z) = A_U^p G^p (z,x_0)\quad \textrm{in}~C^0 (\Omega \setminus \{x_0\}).
\end{equation*}
From this we get
\begin{equation}\label{eq-v-5}
\begin{split}
\lim_{\epsilon \rightarrow 0} \lambda_{\epsilon}^{\frac{np}{q+1}} u_{\epsilon}(x) &= A_U^p \int_{\Omega} G(x,z) G^p (z,x_0) dz,\quad x \in \Omega \setminus \{x_0\},
\\
&= A_U^p \widetilde{G}(x,x_0),
\end{split}
\end{equation}
where the value of integration in \eqref{eq-v-5} is finite since $|G(x,y)| \leq C |x-y|^{-(n-2)}$ and $p < \frac{n}{n-2}$. The proof is finished. 
\end{proof}
The above proof gives the idea how a sharp estimate of $\lambda_{\ep}^{\frac{np}{q+1}} u_{\ep}(x)$ for a fixed point $x \in \Omega \setminus \{x_0\}$ is obtain from the sharp estimate of $\lambda_{\ep}^{\frac{n}{q_{\ep}+1}} v_{\ep}(y)$ is required for all $y \in \Omega$. Given this idea, we now handle the intricate case that assumes $d_{\ep} \rightarrow 0$. In the following lemma, we obtain the global estimate of $\lambda_{\ep}^{\frac{n}{q_{\ep}+1}}v_{\ep}$.
\begin{lem}\label{lem-v}
Let $p \in \left[ 1, \frac{n}{n-2}\right)$. Assume that $d_{\ep} \rightarrow 0$. Then we have the following estimate
\begin{equation}\label{eq-v-6}
\lambda_{\epsilon}^{\frac{n}{q+1}} v_{\epsilon}(y) =
 \left\{\begin{array}{ll}  A_{U,\ep}G(y,x_{\epsilon}) + O \left(  |y-x_{\epsilon}|^{-(n-2)}N_{\ep}^{-\delta}\right), &\quad \textrm{if}~|y-x_{\epsilon}| \geq \frac{2d_{\ep}}{\sqrt{N_{\ep}}},
\\
O( V(\lambda_{\ep} (y-x_{\ep}))&\quad \textrm{if}~|y-x_{\epsilon}| < \frac{2d_{\ep}}{\sqrt{N_{\ep}}}.
\end{array}\right.
\end{equation}
where $\delta = \min \{ (n-2)q -n, 1\}$ and $A_{U,\epsilon}>0$ is a number converging to $A_{U} = \int_{\mathbb{R}^n} U^q (y) dy$ as $\epsilon \rightarrow 0$. Here the implicit constant of $O(\cdot)$ is uniform with respect to $y \in \Omega$ and $\epsilon >0$, i.e., there is a constant $C>0$ such that
\begin{equation*}
\left|O \left(   |y-x_{\epsilon}|^{-(n-2)}N_{\ep}^{-\delta}\right)\right|\leq C   |y-x_{\epsilon}|^{-(n-2)}N_{\ep}^{-\delta}\quad \textrm{and}\quad
\left|O(V(\lambda_{\ep} (y-x_{\ep}))\right| \leq C V(\lambda_{\ep} (y-x_{\ep}),
\end{equation*}
for all $y \in \Omega~ \textrm{and}~ \ep >0.$
\end{lem}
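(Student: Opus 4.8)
The plan is to start from the Green representation
\begin{equation*}
v_{\ep}(y)=\int_{\Omega}G(y,z)\,u_{\ep}^{q_{\ep}}(z)\,dz
\end{equation*}
and to turn into a quantitative statement the heuristic that $u_{\ep}^{q_{\ep}}$ behaves like a point mass of size $\simeq A_{U}\lambda_{\ep}^{-n/(q_{\ep}+1)}$ concentrated at $x_{\ep}$, so that $v_{\ep}\simeq A_{U,\ep}G(\cdot,x_{\ep})$ away from the peak. The inner-region bound in \eqref{eq-v-6} is an immediate consequence of the global upper estimate $\widetilde v_{\ep}\le CV$ of Lemma~\ref{lem-gu}: undoing the rescaling $y=\lambda_{\ep}^{-1}x+x_{\ep}$ (under which $|y-x_{\ep}|<2d_{\ep}/\sqrt{N_{\ep}}$ becomes $|x|<2\sqrt{N_{\ep}}$, since $d_{\ep}=N_{\ep}/\lambda_{\ep}$) gives the second line at once. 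Hence essentially all the work is in the outer region $\{|y-x_{\ep}|\ge 2d_{\ep}/\sqrt{N_{\ep}}\}$, where the goal is to peel off the leading term $A_{U,\ep}G(y,x_{\ep})$ together with a remainder of the stated order, \emph{uniformly} in $y\in\Omega$.

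For $y$ in the outer region put $r:=|y-x_{\ep}|$, so $\lambda_{\ep}r\ge 2\sqrt{N_{\ep}}\to\infty$, and split $\Omega$ into the core ball $B_{1}=B(x_{\ep},r/2)$, the ball $B_{2}=B(y,r/2)$ around the evaluation point, and the complement. On $B_{1}$ one has $|y-z|\sim r$, so $G(y,\cdot)$ is smooth there with $|\nabla_{z}G(y,z)|\le Cr^{-(n-1)}$, and a Taylor expansion of $G(y,\cdot)$ about $x_{\ep}$ gives the main term $G(y,x_{\ep})\int_{B_{1}}u_{\ep}^{q_{\ep}}$ plus an error controlled by the first moment $\int_{B_{1}}|z-x_{\ep}|\,u_{\ep}^{q_{\ep}}$ (and one may gain an extra factor of $\lambda_{\ep}r$ here by exploiting the near radial symmetry of $\widetilde u_{\ep}$ coming from Chen--Li--Ou, which makes the first-order term almost cancel). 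Rescaling to $\Omega_{\ep}$ and using $\widetilde u_{\ep}\le CU$ together with the decay \eqref{eq-decay} — recall $q(p(n-2)-2)=n+2+2p$, so $|w|U^{q}\in L^{1}(\mathbb R^{n})$ and $\int_{|w|>R}U^{q}\lesssim R^{-(2p+2)}$ — and using $\lambda_{\ep}r\gtrsim\sqrt{N_{\ep}}$, one finds
\begin{equation*}
\int_{B_{1}}G(y,z)u_{\ep}^{q_{\ep}}\,dz=\lambda_{\ep}^{-n/(q+1)}G(y,x_{\ep})\,A_{U,\ep}+O\bigl(r^{-(n-2)}\lambda_{\ep}^{-n/(q+1)}N_{\ep}^{-\delta}\bigr),
\end{equation*}
where $A_{U,\ep}:=\lambda_{\ep}^{-n\ep}\int_{\Omega_{\ep}}\widetilde u_{\ep}^{q_{\ep}}$ and $\delta$ is the exponent in the statement. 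On $B_{2}$ one uses $u_{\ep}(z)\le C\lambda_{\ep}^{n/(q_{\ep}+1)}U(\lambda_{\ep}(z-x_{\ep}))\le C\lambda_{\ep}^{n/(q_{\ep}+1)}(\lambda_{\ep}r)^{-[p(n-2)-2]}$ (valid since $|z-x_{\ep}|\ge r/2$ there) together with $\int_{B_{2}}|y-z|^{-(n-2)}dz\lesssim r^{2}$; on the complement one combines $G(y,z)\le c_{n}|y-z|^{-(n-2)}$ with the pointwise decay of $u_{\ep}^{q_{\ep}}$. Using $q(p(n-2)-2)>n$, both are negligible with respect to the remainder already obtained. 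Multiplying through by $\lambda_{\ep}^{n/(q+1)}$ yields \eqref{eq-v-6}; moreover $A_{U,\ep}$ is bounded above and below by Guerra's estimate $\lambda_{\ep}^{\ep}\le C$, and $A_{U,\ep}\to A_{U}$ follows from $\int_{\Omega_{\ep}}\widetilde u_{\ep}^{q_{\ep}}\to A_{U}$ (dominated convergence, $\widetilde u_{\ep}\le CU$, $q_{\ep}\to q$, $U^{q}\in L^{1}$) together with $\lambda_{\ep}^{\ep}\to 1$.

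The main obstacle is the uniformity in $y$ under the standing hypothesis $d_{\ep}\to 0$. Because $x_{\ep}$ then slides into $\partial\Omega$, the Green function $G(y,x_{\ep})$ and its regular part $H(y,\cdot)$ degenerate near the boundary, and the gradient bound $|\nabla_{z}G(y,z)|\le Cr^{-(n-1)}$ that feeds the estimate on $B_{1}$ must be secured without losing powers of $d_{\ep}$; this is exactly where the sharp boundary asymptotics of Lemma~\ref{lem-h-asym} are needed. Equally delicate is the matching scale: for $y$ just outside the inner region $r$ is of order $2d_{\ep}/\sqrt{N_{\ep}}$, so the core contribution and the far contribution are genuinely of the same size, and the splitting radii and the accounting of which term dominates have to be arranged so that every error really carries a strictly negative power of $N_{\ep}$ relative to the main term $r^{-(n-2)}\lambda_{\ep}^{-n/(q+1)}$. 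Controlling this transition, rather than any individual estimate, is the crux of the argument.
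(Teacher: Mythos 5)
Your proposal is correct in spirit and uses the same basic ingredients as the paper (Green's representation, the pointwise bound $\widetilde u_{\ep}\le CU$, the decay rate $U(s)\sim s^{-[p(n-2)-2]}$ with $q[p(n-2)-2]=n+2+2p$, a mean-value/gradient estimate on $G(y,\cdot)$ to peel off the point-mass contribution, and distance-based subdivision of the remaining integral), but the organization is genuinely different. The paper fixes a core ball $B(x_{\ep},d_{\ep}/\sqrt{N_{\ep}})$ independent of $y$, defines $A_{U,\ep}:=\int_{B(x_{\ep},d_{\ep}/\sqrt{N_{\ep}})}\lambda_{\ep}^{n/(q_{\ep}+1)}u_{\ep}^{q_{\ep}}$ directly as the integral over that ball (so it is automatically a single number), does the mean-value estimate only there with the harmonic gradient bound $|\nabla_z G(y,z)|\lesssim d_{\ep}^{-1}|y-x_{\ep}|^{-(n-2)}$, and then bounds the tail $\int_{\Omega\setminus B(x_{\ep},d_{\ep}/\sqrt{N_{\ep}})}$ by splitting into three regions according to which of $|z-y|$, $|z-x_{\ep}|$ is $\le\tfrac12|y-x_{\ep}|$. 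You instead take a $y$-dependent core $B(x_{\ep},r/2)$ with $r=|y-x_{\ep}|$, use the Widman-type bound $|\nabla_z G(y,z)|\lesssim r^{-(n-1)}$ on it, and must then separately reconcile $\int_{B(x_{\ep},r/2)}u_{\ep}^{q_{\ep}}$ with a fixed $A_{U,\ep}$ — which your choice $A_{U,\ep}=\lambda_{\ep}^{-n\ep}\int_{\Omega_{\ep}}\widetilde u_{\ep}^{q_{\ep}}$ does handle, at the cost of a tail term $\int_{\Omega\setminus B(x_{\ep},r/2)}u_{\ep}^{q_{\ep}}\lesssim\lambda_{\ep}^{-n/(q_{\ep}+1)}(\lambda_{\ep}r)^{-((n-2)q-n)}$. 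Both organizations produce the same errors $O\bigl(r^{-(n-2)}N_{\ep}^{-\delta/2}\bigr)$, and yours is arguably a bit cleaner because the Widman bound is uniform. Two small caveats: (i) the parenthetical about gaining a factor of $\lambda_{\ep}r$ from radial symmetry is not needed and is not quite justified — Chen--Li--Ou gives radial symmetry of the limit $U$, not of $\widetilde u_{\ep}$ itself, so the first moment does not actually vanish; (ii) the gradient bound $|\nabla_z G(y,z)|\lesssim|y-z|^{-(n-1)}$ is a classical estimate valid for any smooth bounded domain and does not require Lemma~\ref{lem-h-asym} (which the paper needs only for the proof of Theorem~\ref{thm-3}), so attributing the uniformity to that lemma is a misdirection. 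Finally, your reliance on $\lambda_{\ep}^{\ep}\to1$ to conclude $A_{U,\ep}\to A_U$ goes slightly beyond what the paper's cited Lemma~2.2 of Guerra provides ($\lambda_{\ep}^{\ep}\le C$), though it is harmless here since the lemma does not need the precise value of the limit, only boundedness of $A_{U,\ep}$ and the leading-order role of $G(y,x_{\ep})$.
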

\begin{proof}
We begin with writing Green's expression from \eqref{eq-main} as follows.
\begin{equation}\label{eq-v-1}
\begin{split}
\lambda_{\epsilon}^{\frac{n}{q+1}}v_{\epsilon}(y)& = \int_{\Omega} G(y,z) \lambda_{\epsilon}^{\frac{n}{q_{\epsilon}+1}} u_{\epsilon}^{q}(z) dz
\\
& = \int_{B(x_{\ep}, d_{\ep}/\sqrt{N_{\ep}})} G(y,z) \lambda_{\epsilon}^{\frac{n}{q_{\epsilon}+1}} u_{\epsilon}^{q}(z) dz
+ \int_{\Omega\setminus B(x_{\ep}, d_{\ep}/\sqrt{N_{\ep}})} G(y,z) \lambda_{\epsilon}^{\frac{n}{q_{\epsilon}+1}} u_{\epsilon}^{q}(z) dz.
\end{split}
\end{equation}
We first estimate the last integration as an error term.
\begin{equation*}
\begin{split}
\left|\int_{\Omega\setminus B(x_{\ep}, d_{\ep}/\sqrt{N_{\ep}})} G(y,z) \lambda_{\epsilon}^{\frac{n}{q_{\epsilon}+1}} u_{\epsilon}^{q}(z) dz\right|
&\leq \int_{\Omega\setminus B(x_{\ep}, d_{\ep}/\sqrt{N_{\ep}})} \frac{1}{|y-z|^{n-2}} \frac{\lambda_{\ep}^{n}}{(1+\lambda_{\ep} |z-x_{\ep}|)^{(n-2)q}} dz
\\
&\leq \lambda_{\ep}^{n-(n-2)q}\int_{\Omega\setminus B(x_{\ep}, d_{\ep}/\sqrt{N_{\ep}})} \frac{1}{|y-z|^{n-2}} \frac{1}{|z-x_{\ep}|^{(n-2)q}} dz.
\end{split}
\end{equation*}
To analyze the integration, we split the domain as
\[
\Omega\setminus B(x_{\ep}, d_{\ep}/\sqrt{N_{\ep}}) = A_1 \cup A_2 \cup A_3,
\]
where we have set
\begin{equation*}
\left\{\begin{aligned}
A_1 &= \{ z \in \Omega: |z-y| \leq \frac{1}{2} |y-x_{\ep}|\},
\\
A_2 & = \{ z \in \Omega: |z-x_{\ep}| \leq \frac{1}{2} |y-x_{\ep}| \quad \textrm{and}\quad |z-x_{\ep}| > \frac{d_{\ep}}{\sqrt{N_{\ep}}}\},
\\
A_3 & = \{ z \in \Omega: |z-y| > \frac{1}{2}|y-x_{\ep}|\quad \textrm{and}\quad |z-x_{\ep}| > \frac{1}{2} |y-x_{\ep}|\}.
\end{aligned}\right.
\end{equation*}
First we estimate the integration on $A_1$. Note that $|z-x_{\ep}| \geq |y-x_{\ep}| - |z-y| \geq \frac{1}{2} |y-x_{\ep}|$ for $z \in A_1$. Hence we have
\begin{equation*}
\begin{split}
\lambda_{\ep}^{n-(n-2)q}\int_{A_1} \frac{1}{|y-z|^{n-2}} \frac{1}{|z-x_{\ep}|^{(n-2)q}} dz &\leq \frac{C\lambda_{\ep}^{n-(n-2)q}}{|y-x_{\ep}|^{(n-2)q}} \int_{|z-y| \leq \frac{|y-x_{\ep}|}{2}} \frac{1}{|y-z|^{n-2}}dz
\\
&\leq \frac{C\lambda_{\ep}^{n-(n-2)q}}{|y-x_{\ep}|^{(n-2)q}}|y-x_{\ep}|^2
\\
&= \frac{C}{|y-x_{\ep}|^{n-2}} \cdot \frac{1}{(\lambda_{\ep}|y-x_{\ep}|)^{(n-2)q -n}}.
\end{split}
\end{equation*}
Similarly, we have  $|y-z|\geq |y-x_{\ep}| - |z-x_{\ep}| \geq\frac{|y-x_{\ep}|}{2}$ for $z \in A_2$. Using this we estimate
\begin{equation*}
\begin{split}
\lambda_{\ep}^{n-(n-2)q}\int_{A_2} \frac{1}{|y-z|^{n-2}} \frac{1}{|z-x_{\ep}|^{(n-2)q}} dz &\leq \frac{C\lambda_{\ep}^{n-(n-2)q}}{|y-x_{\ep}|^{(n-2)}} \int_{|z-x_{\ep}|> \frac{d_{\ep}}{\sqrt{N_{\ep}}}} \frac{1}{|z-x_{\ep}|^{(n-2)q}}dz
\\
&\leq \frac{C\lambda_{\ep}^{n-(n-2)q}}{|y-x_{\ep}|^{(n-2)}} \frac{1}{(d_{\ep}/ \sqrt{N_{\ep}})^{(n-2)q-n}}
\\
&= \frac{C}{|y-x_{\ep}|^{n-2}} \cdot \frac{1}{(\sqrt{N_{\ep}})^{(n-2)q -n}}.
\end{split}
\end{equation*}
Also we note that $|z-x_{\ep}| \geq |z-y| - |y-x_{\ep}|\geq \frac{1}{2} |z-y|$ for $z \in A_3$. Using this we get
\begin{equation*}
\begin{split}
\lambda_{\ep}^{n-(n-2)q}\int_{A_3} \frac{1}{|y-z|^{n-2}} \frac{1}{|z-x_{\ep}|^{(n-2)q}} dz &\leq {C\lambda_{\ep}^{n-(n-2)q}} \int_{|y-z| > \frac{|y-x_{\ep}|}{2}} \frac{1}{|y-z|^{n-2 +(n-2)q}}dz
\\
&\leq \frac{C\lambda_{\ep}^{n-(n-2)q}}{|y-x_{\ep}|^{(n-2)q-2}}
\\
&= \frac{C}{|y-x_{\ep}|^{n-2}} \cdot \frac{1}{(\lambda_{\ep}|y-x_{\ep}|)^{(n-2)q -n}}.
\end{split}
\end{equation*}
Next we look at the main term of \eqref{eq-v-1}. Let us write it as
\begin{equation}\label{eq-v-11}
\begin{split}
& \int_{B(x_{\ep}, d_{\ep}/\sqrt{N_{\ep}})} G(y,z) \lambda_{\epsilon}^{\frac{n}{q_{\epsilon}+1}} u_{\epsilon}^{q}(z) dz
\\
& = G(y, x_{\ep}) 
 \int_{B(x_{\ep}, d_{\ep}/\sqrt{N_{\ep}})}  \lambda_{\epsilon}^{\frac{n}{q_{\epsilon}+1}} u_{\epsilon}^{q}(z) dz + 
 \int_{B(x_{\ep}, d_{\ep}/\sqrt{N_{\ep}})} [G(y,z)-G(y,x_{\ep})] \lambda_{\epsilon}^{\frac{n}{q_{\epsilon}+1}} u_{\epsilon}^{q}(z) dz.
\end{split}
\end{equation}
Let $A_{U,\ep} =  \int_{B(x_{\ep}, d_{\ep}/\sqrt{N_{\ep}})}  \lambda_{\epsilon}^{\frac{n}{q_{\epsilon}+1}} u_{\epsilon}^{q}(z) dz$. Then, using the dominated convergence theorem we find that
\begin{equation*}
\lim_{\ep \rightarrow 0} A_{U,\ep} =\lim_{\ep \rightarrow 0} \int_{|y| \leq \sqrt{N_{\ep}}} \widetilde{u}^{q_{\ep}} (y) dy  = \int_{\mathbb{R}^n} U^q (y) dy = A_U.
\end{equation*}
Now, it is only left to estimate the last term of \eqref{eq-v-11}. For $z \in B(x_{\ep}, \frac{d_{\ep}}{\sqrt{N_{\ep}}})$ we have
\begin{equation*}
\begin{split}
\left|\nabla_z G(y,z) \right| & \leq \frac{1}{d_{\ep}}  \sup_{w \in B(x_{\ep}, d_{\ep})} |G(y,w)|
\\
&\leq \frac{1}{d_{\ep}} \sup_{w \in B(x_{\ep}, d_{\ep})} \frac{C}{|y-w|^{n-2}}
\\
&\leq\frac{C}{d_{\ep} |y-x_{\ep}|^{n-2}}.
\end{split}
\end{equation*}
Using this we get
\begin{equation*}
\begin{split}
& \int_{B(x_{\ep}, d_{\ep}/\sqrt{N_{\ep}})} [G(y,z)-G(y,x_{\ep})] \lambda_{\epsilon}^{\frac{n}{q_{\epsilon}+1}} u_{\epsilon}^{q}(z) dz 
\\
& \quad\quad \leq \frac{C}{d_{\ep} |y-x_{\ep}|^{n-2}}  \int_{B(x_{\ep}, d_{\ep}/\sqrt{N_{\ep}})}|z-x_{\ep}| U^{q} (\lambda_{\ep} (z-x_{\ep})) dz
\\
&\quad\quad \leq \frac{C\sqrt{N_{\ep}}}{\lambda_{\ep}d_{\ep} |y-x_{\ep}|^{n-2}} \int_{\mathbb{R}^n} |x|U^q (x) dx
\\
&\quad\quad \leq \frac{C}{\sqrt{N_{\ep}} |y-x_{\ep}|^{n-2}}.
\end{split}
\end{equation*}The proof is complete.
\end{proof}
Based on the result of the previous lemma, we shall now find the sharp estimate of $u_{\epsilon}$ on $\partial B(x_{\epsilon}, 2d_{\epsilon})$.
\begin{lem}\label{lem-u}
For $x \in \partial B(x_{\ep}, 2d_{\ep})$ we have
\begin{equation}\label{eq-u-70}
u_{\epsilon}(x) = \lambda_{\epsilon}^{-\frac{np}{q_{\epsilon}+1}} (A_{U,\ep})^p \widetilde{G}(x,x_{\epsilon}) + o (\lambda_{\epsilon}^{-\frac{np}{q+1}} d_{\epsilon}^{-(n-2)p+2})
\end{equation}
and
\begin{equation*}
\nabla u_{\epsilon}(x) = \lambda_{\epsilon}^{-\frac{np}{q_{\epsilon}+1}} (A_{U,\ep})^p \nabla \widetilde{G}(x,x_{\epsilon}) + o (\lambda_{\epsilon}^{-\frac{np}{q+1}} d_{\epsilon}^{-(n-2)p+1}).
\end{equation*}
\end{lem}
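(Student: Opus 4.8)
The plan is to start from Green's representation
\[
u_{\ep}(x) = \int_{\Omega} G(x,z)\, v_{\ep}^{p}(z)\, dz,
\]
multiply by $\lambda_{\ep}^{np/(q_{\ep}+1)}$, and split the integral according to the three regimes dictated by Lemma \ref{lem-v}: the inner region $B(x_{\ep}, 2d_{\ep}/\sqrt{N_{\ep}})$ where $v_{\ep}$ is comparable to the rescaled profile $\lambda_{\ep}^{-n/(p+1)}V(\lambda_{\ep}(z-x_{\ep}))$, the bulk region $B(x_{\ep}, 2d_{\ep}/\sqrt{N_{\ep}})^{c}$ where $\lambda_{\ep}^{n/(q+1)}v_{\ep}(z) = A_{U,\ep}G(z,x_{\ep}) + O(|z-x_{\ep}|^{-(n-2)}N_{\ep}^{-\delta})$, and (if needed) a far region away from $x_{\ep}$. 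In the bulk region one writes $v_{\ep}^{p}$ using $(a+b)^p$ with $a = A_{U,\ep}\lambda_{\ep}^{-n/(q+1)}G(z,x_{\ep})$ and $b = O(\lambda_{\ep}^{-n/(q+1)}|z-x_{\ep}|^{-(n-2)}N_{\ep}^{-\delta})$, so that the main term produces $(A_{U,\ep})^{p}\lambda_{\ep}^{-np/(q+1)}\int G(x,z)G^{p}(z,x_{\ep})\,dz$, and since $\lambda_{\ep}^{np/(q_{\ep}+1)} = \lambda_{\ep}^{np/(q+1)}(1+o(1))$ by the bound $\lambda_{\ep}^{\ep}\le C$, this is exactly $\lambda_{\ep}^{-np/(q_{\ep}+1)}(A_{U,\ep})^{p}\widetilde G(x,x_{\ep})$ plus a controllable error. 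The integral $\int_{\Omega}G(x,z)G^{p}(z,x_{\ep})\,dz = \widetilde G(x,x_{\ep})$ by the very definition \eqref{eq-GG} of $\widetilde G$, and it converges because $|G|\lesssim|\cdot|^{-(n-2)}$ and $p < n/(n-2)$.

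Next I would estimate the errors. For the inner region, use $|v_{\ep}^{p}(z)| \le C\lambda_{\ep}^{-np/(p+1)}V^{p}(\lambda_{\ep}(z-x_{\ep}))$ together with $|G(x,z)|\le C d_{\ep}^{-(n-2)}$ (valid since $|x-x_{\ep}| = 2d_{\ep}$ and $|z-x_{\ep}| < 2d_{\ep}/\sqrt{N_{\ep}} \ll d_{\ep}$, so $|x-z|\gtrsim d_{\ep}$), change variables $z = x_{\ep} + \lambda_{\ep}^{-1}w$, and bound $\int_{|w|<2\sqrt{N_{\ep}}}V^{p}(w)\,dw$. Since $V(r)\sim r^{-(n-2)}$ and $p<n/(n-2)$ one has $V^{p}\notin L^1$, so this integral grows like $N_{\ep}^{(n-(n-2)p)/2}$; after collecting the $\lambda_{\ep}$ and $d_{\ep}$ powers this contributes $O(\lambda_{\ep}^{-np/(q+1)}d_{\ep}^{-(n-2)p+2}N_{\ep}^{-\beta})$ for some $\beta>0$, hence is $o(\lambda_{\ep}^{-np/(q+1)}d_{\ep}^{-(n-2)p+2})$. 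For the bulk-region error, the $b$-term of the binomial expansion (and, when $p>1$, the cross terms, which are dominated by $a^{p-1}b$ since $b\lesssim N_{\ep}^{-\delta}a$ there) gives $N_{\ep}^{-\delta}$ times an integral of the same shape as $\widetilde G$, hence again $o$ of the main term; one must also split off the piece near $x$ (where $G(x,z)$ is singular) versus near $x_{\ep}$ to see the bound $\widetilde G(x,x_{\ep})\asymp d_{\ep}^{-(n-2)p+2}$, which follows from scaling $\int G(x,z)G^p(z,x_\ep)\,dz$ and the homogeneity of the singular kernels. The gradient estimate is obtained identically, differentiating $G(x,z)$ in $x$, which costs one extra power of $d_{\ep}^{-1}$ throughout.

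The main obstacle, I expect, is bookkeeping the exponents so that \emph{every} error term genuinely beats the target order $\lambda_{\ep}^{-np/(q+1)}d_{\ep}^{-(n-2)p+2}$ uniformly in $x\in\partial B(x_{\ep},2d_{\ep})$ — in particular checking that the non-integrability of $V^p$ in the inner region only produces a sub-power of $N_{\ep}$ (not a full power that would cancel the gain), and that the cross-terms in the binomial expansion when $1<p<n/(n-2)$ are harmless. A secondary subtlety is that $\widetilde G(x,x_{\ep})$ itself has to be shown to be of exact order $d_{\ep}^{-(n-2)p+2}$ for $x$ on the sphere of radius $2d_{\ep}$; this is where one uses the explicit behavior of $G$ near its pole and away from it, together with the fact (from Lemma \ref{lem-bb}) that $d_{\ep}$ is small but $N_{\ep}=\lambda_{\ep}d_{\ep}\to\infty$. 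Once these are in place the statement follows by assembling the main term with the collected $o(\cdot)$ errors.
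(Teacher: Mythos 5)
Your proposal matches the paper's proof in all essential respects: the Green's representation for $u_\ep$, the split at $|y-x_\ep|=2d_\ep/\sqrt{N_\ep}$ between an inner region (bounded by the rescaled profile $V^p$, whose non-$L^1$ growth produces only the sub-power $\sqrt{N_\ep}^{\,n-(n-2)p}$) and a bulk region where Lemma~\ref{lem-v} gives $\lambda_\ep^{n/(q_\ep+1)}v_\ep = A_{U,\ep}G(\cdot,x_\ep)+O(N_\ep^{-\delta}|\cdot-x_\ep|^{-(n-2)})$, the identification of the main term with $\widetilde G$ via its defining equation, the elementary bound $|a^p-b^p|\le C|a-b|(a^{p-1}+b^{p-1})$ for the cross-terms with a further three-region split of the resulting integral, and the extra $d_\ep^{-1}$ loss for the gradient. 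One small remark: showing that $\widetilde G(x,x_\ep)\asymp d_\ep^{2-(n-2)p}$ is not actually required for this lemma (which only asserts an expansion with an $o(\cdot)$ error); that lower bound on the main term is what makes the expansion useful downstream, but it is established separately via the estimate \eqref{eq-h2} in the proof of Theorem~\ref{thm-2}.
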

\begin{proof}
 Using \eqref{eq-main}, we have
\begin{equation}\label{eq-u-1}
\begin{split}
\lambda_{\epsilon}^{\frac{np}{q_{\epsilon}+1}}u_{\epsilon}(x) 
&= \int_{\Omega}G(x,y)\left( \lambda_{\epsilon}^{\frac{p}{q_{\epsilon}+1}} v_{\epsilon}(y)\right)^p dy
\\
&= \int_{|y-x_{\ep}| \leq \frac{2d_{\ep}}{\sqrt{N}_{\ep}}}G(x,y)\left( \lambda_{\epsilon}^{\frac{p}{q_{\epsilon}+1}} v_{\epsilon}(y)\right)^p dy + \int_{|y-x_{\ep}| >\frac{2d_{\ep}}{\sqrt{N}_{\ep}}}G(x,y)\left( \lambda_{\epsilon}^{\frac{p}{q_{\epsilon}+1}} v_{\epsilon}(y)\right)^p dy.
\end{split}
\end{equation}
First we estimate the last term. Using \eqref{eq-decay} and \eqref{eq-bound}, we have
\begin{equation}\label{eq-u-61}
\begin{split}
&\int_{|y-x_{\ep}| \leq \frac{2d_{\ep}}{\sqrt{N}_{\ep}}}G(x,y)\left( \lambda_{\epsilon}^{\frac{p}{q_{\epsilon}+1}} v_{\epsilon}(y)\right)^p dy 
\\
&\qquad \leq d_{\ep}^{-(n-2)}\int_{|y-x_{\ep}| \leq \frac{2d_{\ep}}{\sqrt{N}_{\ep}}}(\lambda_{\ep}^{(n-2)}V (\lambda_{\ep}(y -x_{\ep})))^p dy 
\\
&\qquad \leq d_{\ep}^{-(n-2)} \lambda_{\ep}^{(n-2)p -n} \int_{B(0, \sqrt{N_{\ep}})} V^p (y) dy.
\\
&\qquad \leq Cd_{\ep}^{-(n-2)} \lambda_{\ep}^{(n-2)p -n} (\sqrt{N_{\ep}})^{n-p (n-2)}
\\
&\qquad = C d_{\ep}^{-(n-2)p+2} [d_{\ep}\lambda_{\ep}]^{(n-2)p -n}(\sqrt{N_{\ep}})^{n-p (n-2)}
\\
&\qquad = C d_{\ep}^{-(n-2)p+2} (\sqrt{N_{\ep}})^{-n + p (n-2)} = o ( d_{\ep}^{-(n-2)p+2}),
\end{split}
\end{equation}
which can be absorbed in the error of the estimate \eqref{eq-u-70}. 
\

Next we note from  Lemma \ref{lem-v} that
\begin{equation}\label{eq-u-68}
\lambda_{\epsilon}^{\frac{p}{q_{\epsilon}+1}} v_{\epsilon}(y) = A_{U,\epsilon}G(y,x_{\epsilon}) + R_{\epsilon}(y),
\end{equation}
where $R_{\epsilon}$ satisfies the estimate $R_{\epsilon}(y) = O (N_{\ep}^{-\delta} |y-x_{\epsilon}|^{-(n-2)})$. Let us write
\begin{equation}\label{eq-u-62}
\begin{split}
&\int_{|y-x_{\ep}| > \frac{2d_{\ep}}{\sqrt{N}_{\ep}}}G(x,y)\left( \lambda_{\epsilon}^{\frac{p}{q_{\epsilon}+1}} v_{\epsilon}(y)\right)^p dy 
\\
&= \int_{|y-x_{\ep}| > \frac{2d_{\ep}}{\sqrt{N}_{\ep}}} G(x,y) \left[ A_{U,\epsilon}G(y,x_{\epsilon}) + R_{\epsilon}(y)\right]^{p}dy 
\\
&=  A_{U,\epsilon}^p \int_{|y-x_{\ep}| > \frac{2d_{\ep}}{\sqrt{N}_{\ep}}}G(x,y)G(y,x_{\epsilon})^{p} dy 
\\
&\qquad + \int_{|y-x_{\ep}| > \frac{2d_{\ep}}{\sqrt{N}_{\ep}}} G(x,y)\left\{ \left[ A_{U,\epsilon}G(y,x_{\epsilon}) + R_{\epsilon}(y)\right]^{p}-A_{U,\epsilon}^p G(y,x_{\epsilon})^{p}\right\} dy.
\end{split}
\end{equation}
Note that
\begin{equation}\label{eq-u-63}
\begin{split}
&\int_{|y-x_{\ep}| >\frac{2d_{\ep}}{\sqrt{N}_{\ep}}}G(x,y)G(y,x_{\ep})^p dy
\\
&= \int_{\Omega}G(x,y) G(y,x_{\ep})^p dy - \int_{|y-x_{\ep}| \leq\frac{2d_{\ep}}{\sqrt{N}_{\ep}}}G(x,y)G(y,x_{\ep})^p dy.
\end{split}
\end{equation}
For $|y-x_{\ep}| \leq \frac{2d_{\ep}}{\sqrt{N_{\ep}}}$ and $x \in \partial B(x_{\ep}, 2d_{\ep})$ we have $|y-x_{\ep}| \geq d_{\ep}$. Using this we find the estimate
\begin{equation}\label{eq-u-64}
\int_{|y-x_{\ep}| \leq\frac{2d_{\ep}}{\sqrt{N}_{\ep}}}G(x,y)G(y,x_{\ep})^p dy \leq C d_{\ep}^{-(n-2)} \left( \frac{2d_{\ep}}{\sqrt{N}_{\ep}}\right)^{-(n-2)p +n} = C d_{\ep}^{2- (n-2)p}\sqrt{N_{\ep}}^{(n-2)p-n}. 
\end{equation} 
By definition \eqref{eq-h1-6} we have 
\begin{equation}\label{eq-u-65}
A_{U,\epsilon}^p \int_{\Omega}G(x,y)G(y,x_{\epsilon})^{p} dy  = A_{U,\ep}^{p} \widetilde{G}(x,x_{\ep}).
\end{equation}
Combining \eqref{eq-u-63}-\eqref{eq-u-65} we see that
\begin{equation}\label{eq-u-66}
A_{U,\epsilon}^p\int_{|y-x_{\ep}| >\frac{2d_{\ep}}{\sqrt{N}_{\ep}}}G(x,y)G(y,x_{\ep})^p dy = A_{U,\epsilon}^p\widetilde{G}(x,x_{\ep}) + o(d_{\ep}^{2-(n-2)p}).
\end{equation}
Now it only remains to estimate the last integration of \eqref{eq-u-62} as an error in \eqref{eq-u-70}. We apply the basic inequality $|a^p -b^p | \leq C|a-b|(a^{p-1} +b^{p-1})$ for $a, b>0$ to get
\begin{equation}\label{eq-u-67}
\begin{split}
&\int_{|y-x_{\ep}|> \frac{2d_{\ep}}{\sqrt{N}_{\ep}}} G(x,y)\left\{ \left[ A_{U,\epsilon}G(y,x_{\epsilon}) + R_{\epsilon}(y)\right]^{p}-A_{U,\epsilon}^p G(y,x_{\epsilon})^{p}\right\} dy 
\\
&\quad \leq C\int_{|y-x_{\ep}|> \frac{2d_{\ep}}{\sqrt{N}_{\ep}}} \frac{1}{|x-y|^{n-2}}\frac{ R_{\epsilon}(y)}{|y-x_{\epsilon}|^{(n-2)(p-1)}} dy:= I.
\end{split}
\end{equation}
We apply the bound of $R_{\ep}$ in \eqref{eq-u-68} and split the integration as
\begin{equation*}
I \leq \frac{C}{N_{\ep}^{\delta}} \int_{|y-x_{\ep}|> \frac{2d_{\ep}}{\sqrt{N}_{\ep}}} \frac{1}{|x-y|^{n-2}}\frac{1}{|y-x_{\epsilon}|^{(n-2)p}} dy := J_1 + J_2 + J_3,
\end{equation*}
where
\begin{equation*}
\begin{split}
J_1 &= \frac{C}{N_{\ep}^{\delta}}\int_{A_1} \frac{1}{|x-y|^{n-2}}\frac{1}{|y-x_{\epsilon}|^{(n-2)p}} dy \quad \textrm{with}\quad  A_1 = B(x, d_{\epsilon}),
\\ 
J_2 &= \frac{C}{N_{\ep}^{\delta}}\int_{A_2} \frac{1}{|x-y|^{n-2}}\frac{1}{|y-x_{\epsilon}|^{(n-2)p}} dy \quad \textrm{with}\quad A_2 = \Omega \setminus ( B(x,d_{\epsilon}) \cup B(x_{\epsilon}, d_{\epsilon})),
\\
J_3 &=\frac{C}{N_{\ep}^{\delta}} \int_{A_3} \frac{1}{|x-y|^{n-2}}\frac{1}{|y-x_{\epsilon}|^{(n-2)p}} dy \quad \textrm{with}\quad A_3 = B(x_{\epsilon}, d_{\epsilon}) \setminus B(x_{\epsilon}, \frac{2d_{\ep}}{\sqrt{N_{\ep}}}).
\end{split}
\end{equation*}
For $y \in A_1$ we have $|y-x_{\ep}|\geq |x-x_{\ep}|-|y-x|\geq 2 d_{\ep} -d_{\ep}=d_{\ep}$. Using this we may estimate $J_1$ as
\begin{equation*}
\begin{split}
J_1 &\leq \frac{C}{N_{\ep}^{\delta}} \cdot d_{\ep}^{-(n-2)p}  \int_{B(x, d_{\ep})} \frac{1}{|y-x|^{n-2}} dy
\\
&\leq C d_{\epsilon}^{-(n-2)p+2} N_{\epsilon}^{-\delta}.
\end{split}
\end{equation*}
Next, to estimate $J_2$, we note that 
\[|y-x_{\ep}| \geq \frac{1}{2}(d_{\ep} + |y-x|)\quad \textrm{and}\quad |y-x|\geq \frac{1}{2}(d_{\ep}+|y-x|)\quad \textrm{for all}~ y \in A_2.\]
Using this and noting that $(n-2)p + (n-2) >n$, we get the estimate
\begin{equation*}
\begin{split}
J_2 &\leq \frac{C}{N_{\ep}^{\delta}}\int_{A_2} \frac{1}{(d_{\ep} + |y-x|)^{(n-2)p+(n-2)}} dy
\\
&\leq \frac{C}{N_{\ep}^{\delta}} \int_{\mathbb{R}^n} \frac{1}{(d_{\ep} + |y-x|)^{(n-2)p+(n-2)}} dy
\\
&\leq \frac{C}{N_{\ep}^{\delta}}\frac{1}{d_{\ep}^{(n-2)p-2}} = Cd_{\epsilon}^{-(n-2)p +2} N_{\epsilon}^{-\delta}.
\end{split}
\end{equation*}
To estimate $J_3$, we note that $|y-x|\geq |x-x_{\ep}| - |y-x_{\ep}| \geq d_{\ep}$ for any $y \in A_3$. Using this we estimate $J_3$ as
\begin{equation*}
\begin{split}
J_3&\leq \frac{C}{N_{\ep}^{\delta}}\int_{A_3} \frac{1}{|y-x|^{n-2}} \frac{1}{|y-x_{\epsilon}|^{(n-2)p}} dy
\\
&\leq \frac{C}{N_{\ep}^{\delta}} d_{\ep}^{-(n-2)} \int_{B(x_{\ep}, d_{\ep})} \frac{1}{|y-x_{\ep}|^{(n-2)p }} dy
\\
&\leq \frac{C}{N_{\ep}^{\delta}} d_{\epsilon}^{-(n-2)} d_{\epsilon}^{n-(n-2)p}
\\
&= Cd_{\epsilon}^{-(n-2)p+2} N_{\epsilon}^{-\delta}.
\end{split}
\end{equation*}
Combining the above estimates gives the estimate $I \leq Cd_{\ep}^{-(n-2)p+2} N_{\ep}^{-\delta}$. Putting this and \eqref{eq-u-66} into \eqref{eq-u-62} we obtain 
\begin{equation*}
\lambda_{\ep}^{\frac{np}{q_{\ep}+1}} u_{\ep} (x) = (A_{U,\ep})^{p} \widetilde{G}(x,x_{\ep}) + o (d_{\ep}^{-(n-2)p +2}).
\end{equation*}
Hence we have obtained the desired estimate for $u_{\ep}$. 
\

In the same way, we can prove the desired estimate for $\nabla u_{\ep}$. For completeness, let us explain it briefly. First, we find from \eqref{eq-u-1} that
\begin{equation}\label{eq-u-1b}
\lambda_{\epsilon}^{\frac{np}{q_{\epsilon}+1}}\nabla u_{\epsilon}(x) = \int_{|y-x_{\ep}| \leq \frac{2d_{\ep}}{\sqrt{N}_{\ep}}}\nabla_x G(x,y)\left( \lambda_{\epsilon}^{\frac{p}{q_{\epsilon}+1}} v_{\epsilon}(y)\right)^p dy + \int_{|y-x_{\ep}| >\frac{2d_{\ep}}{\sqrt{N}_{\ep}}}\nabla_x G(x,y)\left( \lambda_{\epsilon}^{\frac{p}{q_{\epsilon}+1}} v_{\epsilon}(y)\right)^p dy.
\end{equation}
Similarly to \eqref{eq-u-61}, we have
\begin{equation}\label{eq-u-61b}
\begin{split}
&\int_{|y-x_{\ep}| \leq \frac{2d_{\ep}}{\sqrt{N}_{\ep}}}\nabla_x G(x,y)\left( \lambda_{\epsilon}^{\frac{p}{q_{\epsilon}+1}} v_{\epsilon}(y)\right)^p dy 
\\
&\qquad \leq d_{\ep}^{-(n-1)}\int_{|y-x_{\ep}| \leq \frac{2d_{\ep}}{\sqrt{N}_{\ep}}}(\lambda_{\ep}^{(n-2)}V (\lambda_{\ep}(y -x_{\ep})))^p dy 
\\
&\qquad = C d_{\ep}^{-(n-2)p+1} (\sqrt{N_{\ep}})^{-n + p (n-2)}.
\end{split}
\end{equation}
Similarly to \eqref{eq-u-62} we write
\begin{equation}\label{eq-u-62b}
\begin{split}
&\int_{|y-x_{\ep}| > \frac{2d_{\ep}}{\sqrt{N}_{\ep}}}\nabla_x G(x,y)\left( \lambda_{\epsilon}^{\frac{p}{q_{\epsilon}+1}} v_{\epsilon}(y)\right)^p dy 
\\
&=  A_{U,\epsilon}^p \int_{|y-x_{\ep}| > \frac{2d_{\ep}}{\sqrt{N}_{\ep}}}\nabla_x G(x,y)G(y,x_{\epsilon})^{p} dy 
\\
&\qquad + \int_{|y-x_{\ep}| > \frac{2d_{\ep}}{\sqrt{N}_{\ep}}} \nabla_x G (x,y)\left\{ \left[ A_{U,\epsilon}G(y,x_{\epsilon}) + R_{\epsilon}(y)\right]^{p}-A_{U,\epsilon}^p G(y,x_{\epsilon})^{p}\right\} dy.
\end{split}
\end{equation}
We have
\begin{equation}\label{eq-u-63b}
\begin{split}
&\int_{|y-x_{\ep}| >\frac{2d_{\ep}}{\sqrt{N}_{\ep}}}\nabla_x G(x,y)G(y,x_{\ep})^p dy
\\
&= \int_{\Omega}\nabla_x G(x,y) G(y,x_{\ep})^p dy - \int_{|y-x_{\ep}| \leq\frac{2d_{\ep}}{\sqrt{N}_{\ep}}}\nabla_x G(x,y)G(y,x_{\ep})^p dy
\end{split}
\end{equation}
and
\begin{equation}\label{eq-u-64b}
\left|\int_{|y-x_{\ep}| \leq\frac{2d_{\ep}}{\sqrt{N}_{\ep}}}\nabla_x G(x,y)G(y,x_{\ep})^p dy \right|\leq C d_{\ep}^{1- (n-2)p}\sqrt{N_{\ep}}^{(n-2)p-n}. 
\end{equation} 
Note from \eqref{eq-h1-6} that 
\begin{equation}\label{eq-u-65b}
A_{U,\epsilon}^p \int_{\Omega}\nabla_x G(x,y)G(y,x_{\epsilon})^{p} dy  = A_{U,\ep}^{p}\nabla_x \widetilde{G}(x,x_{\ep}).
\end{equation}
Hence
\begin{equation}\label{eq-u-66b}
\int_{|y-x_{\ep}| >\frac{2d_{\ep}}{\sqrt{N}_{\ep}}}\nabla_x G(x,y)G(y,x_{\ep})^p dy = \nabla_x \widetilde{G}(x,x_{\ep}) + o(d_{\ep}^{1-(n-2)p}).
\end{equation}
Next, we estimate
\begin{equation}\label{eq-u-67}
\begin{split}
&\int_{|y-x_{\ep}|> \frac{2d_{\ep}}{\sqrt{N}_{\ep}}} \nabla_x G(x,y)\left\{ \left[ A_{U,\epsilon}G(y,x_{\epsilon}) + R_{\epsilon}(y)\right]^{p}-A_{U,\epsilon}^p G(y,x_{\epsilon})^{p}\right\} dy 
\\
&\quad \leq C\int_{|y-x_{\ep}|> \frac{2d_{\ep}}{\sqrt{N}_{\ep}}} \frac{1}{|x-y|^{n-1}}\frac{ R_{\epsilon}(y)}{|y-x_{\epsilon}|^{(n-2)(p-1)}} dy:= \widetilde{I}.
\end{split}
\end{equation}
We have
\begin{equation*}
|\nabla_x G(x,y) | \leq C\left( \frac{1}{d_{\ep}|x-y|^{n-2}} + \frac{1}{|x-y|^{n-1}}\right).
\end{equation*}
Using this and the bound of $R_{\ep}$ we estimate $\widetilde{I}$ as
\begin{equation*}
\begin{split}
\widetilde{I} &\leq \frac{C}{N_{\ep}^{\delta}} \int_{|y-x_{\ep}|> \frac{2d_{\ep}}{\sqrt{N}_{\ep}}} \frac{1}{|x-y|^{n-1}}\frac{1}{|y-x_{\epsilon}|^{(n-2)p}} dy 
\\
&\quad\quad +\frac{C}{N_{\ep}^{\delta} d_{\ep}} \int_{|y-x_{\ep}|> \frac{2d_{\ep}}{\sqrt{N}_{\ep}}} \frac{1}{|x-y|^{n-2}}\frac{1}{|y-x_{\epsilon}|^{(n-2)p}} dy.
\end{split}
\end{equation*}
The first integration is bounded by $Cd_{\ep}^{-(n-2)p+1}N_{\ep}^{-\delta}$ by the estimate of $I$. Also, the second integration can be estimated by the same way for $I$. More precisely, we split the integration again as
\begin{equation*}
\int_{|y-x_{\ep}|> \frac{2d_{\ep}}{\sqrt{N}_{\ep}}} \frac{1}{|x-y|^{n-1}}\frac{1}{|y-x_{\epsilon}|^{(n-2)p}} dy := \widetilde{J}_1 +\widetilde{J}_2 + \widetilde{J}_3,
\end{equation*}
where
\begin{equation*}
\begin{split}
\widetilde{J}_1 &= \frac{C}{N_{\ep}^{\delta}}\int_{A_1} \frac{1}{|x-y|^{n-1}}\frac{1}{|y-x_{\epsilon}|^{(n-2)p}} dy \quad \textrm{with}\quad  A_1 = B(x, d_{\epsilon}),
\\ 
\widetilde{J}_2 &= \frac{C}{N_{\ep}^{\delta}}\int_{A_2} \frac{1}{|x-y|^{n-1}}\frac{1}{|y-x_{\epsilon}|^{(n-2)p}} dy \quad \textrm{with}\quad A_2 = \Omega \setminus ( B(x,d_{\epsilon}) \cup B(x_{\epsilon}, d_{\epsilon})),
\\
\widetilde{J}_3 &=\frac{C}{N_{\ep}^{\delta}} \int_{A_3} \frac{1}{|x-y|^{n-1}}\frac{1}{|y-x_{\epsilon}|^{(n-2)p}} dy \quad \textrm{with}\quad A_3 = B(x_{\epsilon}, d_{\epsilon}) \setminus B(x_{\epsilon}, \frac{2d_{\ep}}{\sqrt{N_{\ep}}}).
\end{split}
\end{equation*}
For $y \in A_1$ we have $|y-x_{\ep}|\geq d_{\ep}$ and we estimate $\widetilde{J}_1$ as
\begin{equation*}
\begin{split}
\widetilde{J}_1 &\leq \frac{C}{N_{\ep}^{\delta}} \cdot d_{\ep}^{-(n-2)p}  \int_{B(x, d_{\ep})} \frac{1}{|y-x|^{n-2}} dy
\\
&\leq C d_{\epsilon}^{-(n-2)p+1} N_{\epsilon}^{-\delta}.
\end{split}
\end{equation*}
Also, similarly for the estimate $J_2$, we have
\begin{equation*}
\begin{split}
\widetilde{J}_2 &\leq \frac{C}{N_{\ep}^{\delta}}\int_{A_2} \frac{1}{(d_{\ep} + |y-x|)^{(n-2)p+(n-1)}} dy
\\
&\leq \frac{C}{N_{\ep}^{\delta}} \int_{\mathbb{R}^n} \frac{1}{(d_{\ep} + |y-x|)^{(n-2)p+(n-1)}} dy
\\
&\leq \frac{C}{N_{\ep}^{\delta}}\frac{1}{d_{\ep}^{(n-2)p-1}} = Cd_{\epsilon}^{-(n-2)p +1} N_{\epsilon}^{-\delta}.
\end{split}
\end{equation*}
To estimate $\widetilde{J}_3$, we note that for any $y \in A_3$ we have $|y-x|\geq |x-x_{\ep}| - |y-x_{\ep}| \geq 2 d_{\ep}-d_{\ep} =d_{\ep}$. Then
\begin{equation*}
\begin{split}
\widetilde{J}_3&\leq \frac{C}{N_{\ep}^{\delta}}\int_{A_3} \frac{1}{|y-x|^{n-1}} \frac{1}{|y-x_{\epsilon}|^{(n-2)p}} dy
\\
&\leq \frac{C}{N_{\ep}^{\delta}} d_{\ep}^{-(n-1)} \int_{B(x_{\ep}, d_{\ep})} \frac{1}{|y-x_{\ep}|^{(n-2)p }} dy
\\
&\leq Cd_{\epsilon}^{-(n-2)p+1} N_{\epsilon}^{-\delta}.
\end{split}
\end{equation*}
Combining the above estimates gives the desired estimate for $\nabla u_{\ep}$. The proof is finished.
\end{proof}
Now we are ready to prove Theorem \ref{thm-2}.
\begin{proof}[Proof of Theorem \ref{thm-2}]
Let $d_{\ep} = d (x_{\ep}, \partial \Omega)/4$. Then we have to show that $\int_{\ep >0} d_{\ep} >0$ for the proof. As before, we argue by contradiction. Suppose not. Then, we have $d_{\ep} \searrow 0$ in a subsequence. By Lemma \ref{lem-bb} we have
$d_{\ep} = N_{\ep}/\lambda_{\ep}$ with $\lim_{\ep \rightarrow \infty}N_{\ep} = \infty$. 
\

We set $B_{\epsilon} = B(x_{\epsilon}, 2d_{\epsilon})$ for each $\ep >0$. 
Applying Lemma \ref{lem-poho} with $D = B_{\epsilon}$ we get
the identity $L_j^{\epsilon}= R_j^{\epsilon}$ with
\begin{equation*}
\begin{split}
L_j^{\epsilon}&:= -\int_{\partial B_{\epsilon}} \left( \frac{\partial u_{\epsilon}}{\partial \nu} \frac{\partial v_{\epsilon}}{\partial x_j} + \frac{\partial v}{\partial \nu} \frac{\partial u}{\partial x_j} \right) dS_x + \int_{\partial B} (\nabla u_{\ep} \cdot \nabla v_{\ep}) \nu_j dS_x
\\
&\qquad - \frac{1}{p+1} \int_{\partial B} v_{\ep}^{p+1} \nu_j dS_x,
\\
R_j^{\epsilon}&:= \frac{1}{q_{\epsilon}+1} \int_{\partial B} u_{\ep}^{q_{\epsilon}+1} \nu_j dS_x.
\end{split}
\end{equation*}
We shall derive a contradiction by obtaining sharp estimates of $L_j^{\ep}$ and $R_j^{\ep}$. 
\

To evaluate the left hand side, we use Lemma \ref{lem-v} and Lemma \ref{lem-u} to yield
\begin{equation}\label{eq-h3-1}
\begin{split}
L_{j}^{\epsilon} & = \left( \lambda_{\epsilon}^{-\frac{np}{q_{\epsilon}+1}}\lambda_{\epsilon}^{-\frac{n}{q_{\epsilon}+1}}\right) \left[ \int_{\partial B_{\epsilon}} -\left( \frac{\partial \widetilde{G}}{\partial \nu} (x,x_{\epsilon})  \frac{\partial G}{\partial x_j}(x,x_{\epsilon}) + \frac{\partial G}{\partial \nu}(x,x_{\epsilon}) \frac{\partial \widetilde{G}}{\partial x_j}(x,x_{\epsilon})\right) dS_x\right.
\\
&\quad +\left. \int_{\partial B_{\epsilon}}\left( \nabla G (x,x_{\epsilon}) \cdot \nabla \widetilde{G}(x,x_{\epsilon}) \right) \nu_j dS_x 
-\frac{1}{p+1} \int_{\partial B_{\epsilon}} G^{p+1}(x,x_{\epsilon}) \nu_j dS_x\right]
\\
&\quad + o ( |\partial B_{\epsilon}| \lambda_{\epsilon}^{-\frac{np}{q_{\epsilon}+1}} \lambda_{\epsilon}^{-\frac{n}{q_{\epsilon}+1}} d_{\epsilon}^{-(n-1)} d_{\epsilon}^{-(n-2)p+1})
\\
&:= \left( \lambda_{\epsilon}^{-\frac{np}{q_{\epsilon}+1}}\lambda_{\epsilon}^{-\frac{n}{q_{\epsilon}+1}}\right) I (2d_{\epsilon})+ o ( |\partial B_{\epsilon}| \lambda_{\epsilon}^{-\frac{np}{q+1}} \lambda_{\epsilon}^{-\frac{n}{q+1}} d_{\epsilon}^{-(n-1)} d_{\epsilon}^{-(n-2)p+1}),
\end{split}
\end{equation}
where
\begin{equation}\label{eq-h3-4}
\begin{split}
I(r) &=  - \int_{\partial B_{\epsilon}} \left( \frac{\partial \widetilde{G}}{\partial \nu} (x,x_{\epsilon})  \frac{\partial G}{\partial x_j}(x,x_{\epsilon}) + \frac{\partial G}{\partial \nu}(x,x_{\epsilon}) \frac{\partial \widetilde{G}}{\partial x_j}(x,x_{\epsilon})\right) dS_x
\\
&\quad + \int_{\partial B_{\epsilon}}\left( \nabla_x G (x,x_{\epsilon}) \cdot \nabla_x \widetilde{G}(x,x_{\epsilon}) \right) \nu_j dS_x 
-\frac{1}{p+1} \int_{\partial B_{\epsilon}} G^{p+1}(x,x_{\epsilon}) \nu_j dS_x.
\end{split}
\end{equation}
To compute the value of $I(2d_{\epsilon})$, we first claim that the value of $I(r)$ is independent of $r>0$. To prove this claim, we let $A_{r} = B(x_{\epsilon},3d_{\epsilon})\setminus B(x_{\epsilon},r)$ for each $r \in (0,2d_{\ep}]$ and note that we have $-\Delta_x \widetilde{G} (x,x_{\ep})= G^{p}(x,x_{\ep})$ for $x \in A_r$ by definition \eqref{eq-h1-6}. Using this and an integration by parts, we get 
\begin{equation*} 
\begin{split}
\frac{1}{p+1} \int_{\partial A_r} G^{p+1}(x,x_{\ep}) \nu_j dS_x &=\int_{A_r} G^{p}(x,x_{\ep}) \frac{\partial G (x,x_{\ep})}{\partial x_j} dx
\\
& = \int_{A_r} -\Delta_x \widetilde{G} (x,x_{\ep})\frac{\partial G}{\partial x_j}(x,x_{\ep}) dx 
\\
&= - \int_{\partial A_r}\frac{\partial \widetilde{G}}{\partial \nu} \frac{\partial G}{\partial x_j} (x,x_{\ep})dS_x + \int_{A_r} \nabla_x \widetilde{G} \cdot \nabla_x \frac{\partial G}{\partial x_j}(x,x_{\ep}) dx.
\end{split}
\end{equation*}
Similarly, we use that $\Delta_x G (x,x_{\ep})=0$ for $x \in A_r$  to find
\begin{equation*}
0=\int_{A_r} \frac{\partial \widetilde{G}}{\partial x_j}  \cdot (-\Delta_x G ) (x,x_{\ep}) dx = -\int_{\partial A_r}\frac{\partial \widetilde{G}}{\partial x_j} \frac{\partial G}{\partial \nu} (x,x_{\ep}) dS_x + \int_{A_r} \nabla_x \frac{\partial \widetilde{G} }{\partial x_j} \nabla_x G (x,x_{\ep}) dx.
\end{equation*}
Summing up these two equalities and using an integration by parts further, we get
\begin{equation*}
\begin{split}
&\frac{1}{p+1} \int_{\partial A_r} G^{p+1} (x,x_{\ep}) \nu_j dS_x
\\
&\qquad= - \int_{\partial A_r}\frac{\partial \widetilde{G}}{\partial \nu} \frac{\partial G}{\partial x_j} (x,x_{\ep})dS_x  -\int_{\partial A_r}\frac{\partial\widetilde{G}}{\partial x_j} \frac{\partial G}{\partial \nu} (x,x_{\ep}) dS_x + \int_{\partial A_r} ( \nabla_x \widetilde{G} \cdot \nabla_x G)(x,x_{\ep}) \, \nu_j dS_x.
\end{split}
\end{equation*}
This equality implies that $I(r)$ is constant function on $r \in (0,2d_{\epsilon}]$, and so 
\begin{equation*}
I( 2d_{\epsilon}) = \lim_{r \rightarrow 0} I(r).
\end{equation*}
Now we are going to find this limit. First we are concerned with the last term of $I(r)$. Using \eqref{eq-h1-3} we have
\begin{equation}\label{eq-h3-6}
\begin{split}
& \int_{\partial B (x_\epsilon, r)} \frac{1}{p+1} G^{p+1} (x,x_{\ep})\,\nu_j dS_y 
\\
& = \int_{\partial B (x_\epsilon, r)} \frac{1}{p+1} \left( \frac{c_n}{|y-x_{\epsilon}|^{n-2}} - H(y,x_{\epsilon})\right)^{p} \nu_j dS_y
\\
& = \int_{\partial B (x_\epsilon, r)} \frac{1}{p+1} \left[ \left( \frac{c_n}{|y-x_{\epsilon}|^{n-2}} - H(y,x_{\epsilon})\right)^{p+1} - \left( \frac{c_n}{|y-x_{\epsilon}|^{n-2}}\right)^{p+1}+ \frac{(p+1)H(y,x_{\epsilon})}{|y-x_{\epsilon}|^{(n-2)p}} \right]\nu_j dS_y
\\
&\qquad - \int_{\partial B (x_\epsilon, r)} \frac{(p+1)H(y,x_{\epsilon})}{|y-x_{\epsilon}|^{(n-2)p}} \nu_j dS_y,
\end{split}
\end{equation}
where we used that 
\begin{equation*} 
\int_{\partial B(x_{\ep},r)} \left( \frac{c_n}{|y-x_{\ep}|^{n-2}}\right)^{p+1} \nu_j dS_y\quad \textrm{for}~ j =1,\cdots, n.
\end{equation*}
Using the Taylor theorem of second order, we derive
\begin{equation*}
\begin{split}
&\left|\int_{\partial B (x_\epsilon, r)} \frac{1}{p+1} \left[ \left( \frac{c_n}{|y-x_{\epsilon}|^{n-2}} - H(y,x_{\epsilon})\right)^{p+1} - \left( \frac{c_n}{|y-x_{\epsilon}|^{n-2}}\right)^{p+1}+ \frac{(p+1)H(y,x_{\epsilon})}{|y-x_{\epsilon}|^{(n-2)p}} \right]\nu_j dS_y\right|
\\
&\leq C\int_{\partial B (x_\epsilon, r)} \frac{1}{|y-x_{\epsilon}|^{(n-2)(p-1)}} dS_y \leq C r^{(n-1)-(n-2)(p-1)}.
\end{split}
\end{equation*}
This term converges to zero as $r \rightarrow 0$ since $(n-2)(p-1) \leq 2 \leq n-1$ for $p < \frac{n}{n-2}$. On the other hand, 
\begin{equation*}
\begin{split}
\left| \int_{\partial B (x_\epsilon, r)} \frac{(p+1)H(y,x_{\epsilon})}{|y-x_{\epsilon}|^{(n-2)p}} \nu_j dS_y\right| & = \left| \int_{\partial B (x_\epsilon, r)} \frac{(p+1)[H(y,x_{\epsilon})-H(x_{\epsilon}, x_{\epsilon})]}{|y-x_{\epsilon}|^{(n-2)p}} \nu_j dS_y\right|
\\
&\leq  C_{\epsilon} \int_{\partial B (x_\epsilon, r)} \frac{r}{r^{(n-2)p}} dS_r \leq Cr^{n-(n-2)p},
\end{split}
\end{equation*}
which goes to zero as $r \rightarrow 0$ since $p < \frac{n}{n-2}$. Combining the above two estimates with \eqref{eq-h3-6} we find that
\begin{equation}\label{eq-h3-5}
\lim_{r \rightarrow 0}  \int_{\partial B (x_\epsilon, r)} \frac{1}{p+1} G^{p+1} (x,x_{\ep}) \nu_j dS_y =0.
\end{equation}
Now we shall estimate the other three terms of $I(r)$ in \eqref{eq-h3-4}. For this aim, we recall from \eqref{eq-h1-2} that
\begin{equation*}
\widetilde{G}(x,y) = \frac{\alpha_1}{|x-y|^{(n-2)p-2}} -\frac{\alpha_2 H(x,y)}{|x-y|^{[(n-2)p-n]}}- \widetilde{H}(x,y)\quad \textrm{for}~ x \neq y.
\end{equation*}
Using this we deduce
\begin{equation*}
\begin{split}
& \lim_{r \rightarrow 0} \int_{\partial B(x_{\epsilon},r)} \frac{\partial \widetilde{G}}{\partial \nu} \frac{\partial G}{\partial x_j}(x,x_{\ep}) dS_x
\\
&= \lim_{r \rightarrow 0} \int_{\partial B(x_{\epsilon},r)} \left[ -\frac{\alpha_1 [(n-2)p-2]}{|x-x_{\epsilon}|^{p(n-2)-1}}+\frac{\alpha_2 [(n-2)p-n] H(x,x_{\epsilon})}{|x-x_{\epsilon}|^{[(n-2)p-n+1]}} + \frac{\alpha_2 \frac{\partial H}{\partial \nu}(x,x_{\epsilon})}{|x-x_{\epsilon}|^{[(n-2)p-n]}}- \frac{\partial \widetilde{H}}{\partial \nu}(x,x_{\epsilon})\right]
\\
&\qquad\left[ -(n-2)c_n\frac{(x-x_{\epsilon})_j}{|x-x_{\epsilon}|^{n}} - \frac{\partial H}{\partial x_j}(x,x_{\epsilon}) \right] dS_x
\\
&= Q_1 + Q_2 + Q_3 + Q_4,
\end{split}
\end{equation*}
where
\begin{equation*}
\begin{split}
Q_1&= \lim_{r \rightarrow 0} \int_{\partial B(x_{\epsilon},r)} \left[ -\frac{\alpha_1 [(n-2)p-2]}{|x-x_{\epsilon}|^{p(n-2)-1}}\right]
\left[ -(n-2)c_n\frac{(x-x_{\epsilon})_j}{|x-x_{\epsilon}|^{n}}  \right] dS_x,
\\
Q_2&= \lim_{r \rightarrow 0} \int_{\partial B(x_{\epsilon},r)} \left[ \frac{\alpha_2 [(n-2)p-n] H(x,x_{\epsilon})}{|x-x_{\epsilon}|^{[(n-2)p-n+1]}} + \frac{\alpha_2 \frac{\partial H}{\partial \nu}(x,x_{\epsilon})}{|x-x_{\epsilon}|^{[(n-2)p-n]}}- \frac{\partial \widetilde{H}}{\partial \nu}(x,x_{\epsilon})\right]
\left[ -(n-2)c_n\frac{(x-x_{\epsilon})_j}{|x-x_{\epsilon}|^{n}}\right] dS_x,
\\
Q_3&= \lim_{r \rightarrow 0} \int_{\partial B(x_{\epsilon},r)} \left[ -\frac{\alpha_1 [(n-2)p-2]}{|x-x_{\epsilon}|^{p(n-2)-1}}\right]
\left[ - \frac{\partial H}{\partial x_j}(x,x_{\epsilon}) \right] dS_x,
\\
Q_4 &= \lim_{r \rightarrow 0} \int_{\partial B(x_{\epsilon},r)} \left[ \frac{\alpha_2 [(n-2)p-n] H(x,x_{\epsilon})}{|x-x_{\epsilon}|^{[(n-2)p-n+1]}} + \frac{\alpha_2 \frac{\partial H}{\partial \nu}(x,x_{\epsilon})}{|x-x_{\epsilon}|^{[(n-2)p-n]}}- \frac{\partial \widetilde{H}}{\partial \nu}(x,x_{\epsilon})\right]
\left[- \frac{\partial H}{\partial x_j}(x,x_{\epsilon}) \right] dS_x.
\end{split}
\end{equation*}
First we see that $Q_1 =0$ by the oddness of the integrand. Also, it is easy to see that $Q_3 = Q_4 =0$ by counting the order of singularity of $|x-x_{\ep}|^{-1}$ with the fact that $p< \frac{n}{n-2}$. 
\

In order to compute $Q_2$, we note that
\begin{equation*}
\begin{split}
\int_{\partial B(x_{\epsilon},r)} \frac{H(x,x_{\epsilon}) (x-x_{\epsilon})_j}{|x-x_{\epsilon}|^{(n-2)p +1}} dx & =\int_{\partial B(x_{\epsilon},r)} \frac{[H(x,x_{\epsilon})- H(x_{\epsilon}, x_{\epsilon})] (x-x_{\epsilon})_j}{|x-x_{\epsilon}|^{(n-2)p+1}}dS_x
\\
&= O \left( \int_{\partial B(x_{\epsilon},r)} \frac{1}{|x-x_{\epsilon}|^{(n-2)p-1}} dS_x\right)
\\
&= O\left( \frac{r^{n-1}}{r^{(n-2)p-1}}\right),
\end{split}
\end{equation*}
which converges to zero as $r\rightarrow 0$ because $p < \frac{n}{n-2}$. Also, counting the singularity we have
\begin{equation*}
\lim_{r \rightarrow 0} \int_{\partial B(x_{\ep}, r)} \frac{\partial H}{\partial \nu} (x,x_{\ep}) \frac{1}{|x-x_{\ep}|^{(n-2s)p}} dS_x = 0,
\end{equation*}
and it is easy to get the following limit
\begin{equation*}
\lim_{r \rightarrow 0} \int_{\partial B(x_{\ep},r)} \frac{\partial \widetilde{H}}{\partial \nu}(x,x_{\ep}) (n-2) c_n \frac{(x-x_{\ep})_j}{|x-x_{\ep}|^n} dS_x = 
\frac{(n-2)c_n}{n} \frac{\partial \widetilde{H}}{\partial x_j}(x_{\epsilon}, x_{\epsilon})|S_{n-1}|.
\end{equation*}
Using these estimates, we get
\begin{equation*}
Q_2 =\frac{(n-2)c_n}{n} \frac{\partial \widetilde{H}}{\partial x_j}(x_{\epsilon}, x_{\epsilon})|S_{n-1}|.
\end{equation*}
Thus we have
\begin{equation*}
 \lim_{r \rightarrow 0} \int_{\partial B(x_{\epsilon},r)} \frac{\partial \widetilde{G}}{\partial \nu} \frac{\partial G}{\partial x_j}(x,x_{\ep}) dS_x =Q_2= \frac{(n-2)c_n}{n} \frac{\partial \widetilde{H}}{\partial x_j}(x_{\epsilon}, x_{\epsilon})|S_{n-1}|.
\end{equation*}
By computing similarly, we can obtain the following limits:
\begin{equation*}
\begin{split}
&\lim_{r \rightarrow 0} \int_{\partial B(x_{\epsilon},r)} \frac{\partial \widetilde{G}}{\partial x_j} \frac{\partial G}{\partial \nu} dS_x
 = (n-2) c_n \frac{\partial \widetilde{H}}{\partial x_j} (x_{\epsilon}, x_{\epsilon})|S_{n-1}|,
\end{split}
\end{equation*}
and 
\begin{equation*}
\lim_{r\rightarrow 0} \int_{\partial B_r} (\nabla \widetilde{G}\cdot \nabla G) \nu_j dS_x = - \frac{(n-2)c_n}{n} \frac{\partial \widetilde{H}}{\partial x_j}(x_{\epsilon}, x_{\epsilon})|S_{n-1}|.
\end{equation*}
Plugging the above computations and \eqref{eq-h3-5} into \eqref{eq-h3-4}, we get
\begin{equation}\label{eq-h3-2}
\begin{split}
I(2d_{\epsilon})&= \lim_{r \rightarrow 0} I(r)
\\
&=\left[\frac{(n-2)c_n}{n} \partial_{x_j} \widetilde{H}(x_0, x_0) + (n-2)c_n \partial_{x_j} \widetilde{H}(x_0, x_0) - \frac{(n-2)c_n}{n} \partial_{x_j} \widetilde{H}(x_0,x_0)\right]|S_{n-1}|
\\
& = (n-2)c_n |S_{n-1}|\partial_{x_j} \widetilde{H}(x_0, x_0).
\end{split}
\end{equation}
Now we find $(a_1, \cdots, a_n) \in S^{n-1}$ such that $n_x = (a_1,\cdots, a_n)$. Then, using \eqref{eq-h3-1} and \eqref{eq-h3-2} we obtain the estimate
\begin{equation}\label{eq-h3-3}
\begin{split}
\sum_{j=1}^{n} a_j L_j^{\epsilon} &=(n-2)c_n |S_{n-1}| \lambda_{\epsilon}^{-\frac{np}{q_{\epsilon}+1}} \lambda_{\epsilon}^{-\frac{n}{q_{\epsilon}+1}} \partial_{\nu_{x_{\epsilon}}} \widetilde{H}(x_{\epsilon}, x_{\epsilon}) + o \left(\lambda_{\epsilon}^{-\frac{np}{q+1}} \lambda_{\epsilon}^{-\frac{n}{q+1}} d_{\epsilon}^{1-(n-2)p }\right).
\end{split}
\end{equation}
By applying the estimate \eqref{eq-h2} of \textbf{(A1)} in the above inequality, we get
\begin{equation*}
\sum_{j=1}^{n} a_j L_j^{\epsilon}\geq C \lambda_{\epsilon}^{-\frac{n(p+1)}{q+1}} \lambda_{\epsilon}^{-1 + (n-2)p} N_{\epsilon}^{1-(n-2)p} =C \lambda_{\epsilon} N_{\epsilon}^{1-(n-2)p},
\end{equation*}
where we made use of the relation $(p,q)$ given by \eqref{eq-cr-hy} in the equality. \

On the other hand, we may estimate $R_j^{\ep}$ using Lemma \ref{lem-gu} and \eqref{eq-decay} to find
\begin{equation*}
\begin{split}
\left|R_j^{\ep}\right| = \frac{1}{q_{\ep}+1}\left|\int_{\partial B(x_{\epsilon},r)}u^{q_{\epsilon}+1} \nu_j dS_x \right|&\leq C \lambda_{\epsilon}^{\frac{n}{q_{\epsilon}+1} (q_{\epsilon}+1)} d_{\epsilon}^{(n-1)} N_{\epsilon}^{-(p(n-2)-2)(q+1)}
\\
&= \lambda_{\ep}^{n} \lambda_{\ep}^{-(n-1)} N_{\ep}^{(n-1)- (p(n-2)-2)(q+1)}
\\
& = \lambda_{\epsilon} N_{\epsilon}^{-np-1},
\end{split}
\end{equation*}
where we used the relation \eqref{eq-cr-hy} in the last equality. Combining this estimate and \eqref{eq-h3-3}, we get the following inequality
\begin{equation*}
\begin{split}
C\lambda_{\epsilon} N_{\epsilon}^{1-(n-2)p}& \leq \sum_{j=1}^{n} a_j L_j^{\ep}
\\
&= \sum_{j=1}^{n} a_j R_j^{\ep}
\\
&\leq C \lambda_{\epsilon} N_{\epsilon}^{-2p - p(n-2) -1}.
\end{split}
\end{equation*}
Because $N_{\ep} \rightarrow \infty$ as $\ep \rightarrow 0$, the above estimate implies that $2p +2 \leq 0$, which contradicts to the fact that $p >0$. Therefore the assumption $d_{\epsilon} \rightarrow 0$ cannot be true. Hence the maximum point $x_{\ep}$ is away from the boundary $\partial \Omega$ uniformly for $\ep >0$. The proof is finished.
\end{proof}

\section{Proof of Theorem \ref{thm-3}}
This section is devoted to prove Lemma \ref{lem-as} and Theorem \ref{thm-3} concerning the property of $\widetilde{H}$ defined in \eqref{eq-h1-2}. 
\

For given $y \in \Omega$, using the translation and rotation invariance property, we may assume that $0 \in \partial \Omega$ is the closet point in $\partial \Omega$ to the point $y$ and the point $y$ is given by $y= (0, \cdots, 0,\kappa) =\kappa e_n$ for some $\kappa>0$. Then we have $y^* = -\kappa e_n$ and $\partial \Omega$ is parametrized as $w= (z_1, \cdots, z_{n-1}, f(z))$ with a function $f : \mathbb{R}^{n-1} \rightarrow \overline{\mathbb{R}^{+}}$ such that $f(0)=0$ and $\nabla f(0) =0$. \

For the proof of Lemma \ref{lem-as}, we shall rescale and take a limit. Namely, we set $\Omega_{\kappa} := \frac{\Omega}{{\kappa}}$  and rescale the function $\widetilde{H}$ to define  the function $W_{\kappa} : \Omega_{\kappa} \rightarrow \mathbb{R}$ for each ${\kappa}>0$ by
\begin{equation}\label{eq-g-0}
W_{\kappa} (z) = \widetilde{H}({\kappa}z, {\kappa}e_n ) {\kappa}^{p(n-2)-2}.
\end{equation}
We remind the well known inequality:
\begin{equation}\label{eq-g-1}
G(x,y) \leq C\min \left\{ \frac{\mathbf{d}(x)\mathbf{d}(y)}{|x-y|^n}, \frac{1}{|x-y|^{n-2}}\right\}.
\end{equation}
Let $G_{\kappa}$ be Green's function of $-\Delta$ on $\Omega_{\kappa}$ with the Dirichlet boundary condition. Then, by \eqref{eq-g-1} we have
\begin{equation*}
\begin{split}
G_{\kappa} (x,y)&= {\kappa}^{n-2} G({\kappa}x, {\kappa}y)
\\
& \leq C d^{n-2} \min \left( \frac{\mathbf{d}({\kappa}x) \mathbf{d}({\kappa}y)}{{\kappa}^{n}{|x-y|^{n}}}, \frac{1}{{\kappa}^{n-2}|x-y|^{n-2}}\right)
\\
& = C\min\left( \frac{\mathbf{d}({\kappa}x)\mathbf{d}({\kappa}y)}{{\kappa}^2 |x-y|^{n}}, \frac{1}{|x-y|^{n-2}}\right).
\end{split}
\end{equation*}
For each $y \in \mathbb{R}^{n}_{+}$ we denote $y^* = (y_1, \cdots, y_{n-1}, -y_n)$ for $y = (y_1, \cdots, y_n) \in \mathbb{R}^{n}_{+}$. And we consider the function $\widetilde{H}_0:\overline{\mathbb{R}^n_{+}} \times \overline{\mathbb{R}^{n}_{+}} \rightarrow \mathbb{R}$ satisfying
\begin{equation}\label{eq-h0-1}
-\Delta_z \widetilde{H}_0 (z,y)=
\left\{\begin{array}{l}\begin{split} &\frac{c_n^p}{|z-y|^{(n-2)p}}-\left( \frac{c_n}{|z-y|^{n-2}}-\frac{c_n}{|z-y^*|^{n-2}} \right)^p - p \frac{1}{|z-y|^{(n-2)(p-1)}} \cdot \frac{c_n^p}{|z-y^*|^{n-2}}
\\
&- \frac{2pc_n^p (n-2)}{[(n-2)(p-2)-2]} \frac{(z-y^*)}{|z-y^*|^n} \frac{z-y}{|z-y|^{(n-2)(p-1)}}\quad \textrm{if}~p \in \left[ \frac{n-1}{n-2}, \frac{n}{n-2}\right]
\end{split}
\\
\\
\frac{c_n^p}{|z-y|^{(n-2)p}}-\left( \frac{c_n}{|z-y|^{n-2}}-\frac{c_n}{|z-y^*|^{n-2}} \right)^p \quad \textrm{if} ~p\in \left[1, \frac{n-1}{n-2}\right),
\end{array}
\right.
\end{equation}
for $z \in \mathbb{R}^n_{+}$ and $y \in \mathbb{R}^n_{+}$ with the boundary condition
\begin{equation}\label{eq-h0-2}
\widetilde{H}_0 (z,y) = \left\{\begin{array}{ll}\frac{\alpha_1-\alpha_2}{|(z-y)|^{(n-2)p-2}}&\quad \textrm{if}~p \in \left[ \frac{n-1}{n-2}, \frac{n}{n-2}\right]
\\
\frac{\alpha_1}{|(z-y)|^{(n-2)p-2}}&\quad\textrm{if}~p\in \left[1, \frac{n-1}{n-2}\right).
\end{array}
\right. z \in \partial \overline{\mathbb{R}^n_{+}}.
\end{equation}
Here $\alpha_1$ and $\alpha_2$ are the values defined in \eqref{eq-alpha}. 
Now we set $W_0 : \overline{\mathbb{R}^n_{+}} \rightarrow \mathbb{R}$ by $W_0 (z) := \widetilde{H}_0 (z,e_n)$. Then we have the following result.
\begin{lem}\label{lem-hd}
As ${\kappa} \rightarrow 0$, the function $W_{\kappa}$ converges to $W_0$ in $C^1 (B(e_n, 1/4))$.
\end{lem}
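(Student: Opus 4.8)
The strategy is to establish the convergence $W_\kappa \to W_0$ in $C^1(B(e_n,1/4))$ by a two-step argument: first prove that the defining data (the right-hand side $-\Delta_z W_\kappa$ and the boundary value of $W_\kappa$ on $\partial\Omega_\kappa$) converge to the corresponding data for $W_0$; second, use elliptic regularity and a compactness argument to upgrade this to $C^1$ convergence of the functions themselves on the fixed ball $B(e_n,1/4)$, which lies well inside $\Omega_\kappa$ for $\kappa$ small. Throughout, the key geometric fact is that $\Omega_\kappa = \Omega/\kappa$ converges to the half-space $\mathbb{R}^n_+$ as $\kappa\to 0$ in the sense that the rescaled boundary $\partial\Omega_\kappa$, parametrized by $w=(z_1,\dots,z_{n-1},\kappa^{-1}f(\kappa z'))$, converges locally uniformly (in $C^2$) to $\partial\mathbb{R}^n_+ = \{z_n=0\}$ because $f(0)=0$ and $\nabla f(0)=0$; moreover $\kappa e_n/\kappa = e_n$ is fixed and $(\kappa e_n)^* = -\kappa e_n$ rescales to $-e_n = e_n^*$.

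First I would unwind the scaling. From the definition $W_\kappa(z) = \widetilde H(\kappa z,\kappa e_n)\kappa^{p(n-2)-2}$ together with \eqref{eq-tilh}, the Laplacian $-\Delta_z W_\kappa(z)$ equals $\kappa^{p(n-2)}$ times $(-\Delta_x\widetilde H)(\kappa z,\kappa e_n)$, and the explicit right-hand side in \eqref{eq-tilh} is homogeneous of degree $-(n-2)p$ in the difference variable once one replaces $G(x,y)$, $H(x,y)$, $\nabla_x H(x,y)$ by their rescaled counterparts $G_\kappa$, $H_\kappa$, $\nabla H_\kappa$ on $\Omega_\kappa$. By Lemma \ref{lem-h-asym} (specifically \eqref{eq-h1-4} and \eqref{eq-h1-5}) applied on $\Omega$, the regular part $H(\kappa x,\kappa e_n)$ is, after rescaling, close to the half-space regular part $c_n|z-e_n^*|^{-(n-2)}$ with an error controlled by $\mathbf{d}(\kappa e_n)/|\cdots| = O(\kappa)$ relative to the main term; the same is true for $\nabla H_\kappa$. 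Hence $-\Delta_z W_\kappa$ converges, uniformly on $B(e_n,1/4)$ (and in fact locally uniformly on $\mathbb{R}^n_+$), to the right-hand side of \eqref{eq-h0-1}, and the boundary value of $W_\kappa$ on $\partial\Omega_\kappa$ converges to the boundary value \eqref{eq-h0-2} of $W_0$ — using that $G_\kappa(w,e_n)$ for $w\in\partial\Omega_\kappa$ is controlled by \eqref{eq-g-1} and tends to zero, so that the boundary trace of $W_\kappa$ is governed by the explicit singular terms $\alpha_1|w-e_n|^{-[(n-2)p-2]}$ (minus $\alpha_2 H_\kappa |w-e_n|^{-[(n-2)p-n]}$ in the range $p\ge\frac{n-1}{n-2}$), which converge to the stated limit.

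Next, I would set up the compactness argument. Fix a large ball $B_R = B(e_n,R)\cap\overline{\mathbb{R}^n_+}$ and, for $\kappa$ small, work on $\Omega_\kappa\cap B_R$. The function $\phi_\kappa := W_\kappa - W_0$ satisfies $-\Delta\phi_\kappa = g_\kappa$ on $\Omega_\kappa\cap B_R$ with $g_\kappa\to 0$ in, say, $L^{n+\eta}$ (by the $C^1$-regularity lemma's integrability bound, uniformly in $\kappa$, since the singularity at $z=e_n$ is the same for all $\kappa$ and integrable in $L^{n+\eta}_{loc}$), and on the two pieces of $\partial(\Omega_\kappa\cap B_R)$ the boundary values of $\phi_\kappa$ are small (on $\partial\Omega_\kappa$ by the boundary-data convergence above; on $\partial B_R$ by a global $L^\infty$ bound on $W_\kappa$ away from $e_n$, which follows from \eqref{eq-g-1} and the explicit formulas, giving decay $|W_\kappa(z)|\le C|z-e_n^*|^{-(n-2)p+2}$ type bounds — here one should also use that $\partial\Omega_\kappa\to\partial\mathbb{R}^n_+$ so the domains $\Omega_\kappa\cap B_R$ stabilize). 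Then $W^{1,s}$ and $C^{1,\beta}$ estimates up to the (uniformly $C^2$) boundary give a uniform bound on $\|\phi_\kappa\|_{C^{1,\beta}(B(e_n,1/3))}$, hence by Arzelà–Ascoli a subsequential $C^1$ limit $\phi_*$; passing to the limit in the equation and boundary condition forces $\phi_*$ to solve $-\Delta\phi_* = 0$ on $\mathbb{R}^n_+\cap B(e_n,1/3)$ with the relevant vanishing boundary trace, and a uniqueness/maximum-principle argument (combined with the decay at infinity of both $W_\kappa$ and $W_0$, which one needs to have recorded) gives $\phi_* \equiv 0$. Since every subsequence has a further subsequence converging to $0$, the full family converges: $W_\kappa\to W_0$ in $C^1(B(e_n,1/4))$.

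\textbf{The main obstacle} I expect is not the interior convergence but the \emph{uniform control up to the moving boundary} $\partial\Omega_\kappa$, together with the \emph{uniform decay at infinity} needed to identify the limit. For the boundary part one must verify that the rescaled domains $\Omega_\kappa$ have boundary charts that are $C^2$-bounded uniformly in $\kappa$ (this uses $f\in C^2$ and $f(0)=\nabla f(0)=0$) so that boundary Schauder/$W^{1,s}$ estimates apply with constants independent of $\kappa$; and one must handle the fact that $W_0$ is \emph{a priori} defined only as a solution on the half-space, so identifying $\phi_*\equiv 0$ requires knowing both $W_\kappa$ and $W_0$ satisfy a common decay bound of the form $|W(z)|=O(|z|^{2-(n-2)p})$ (plus a matching gradient bound) — this is where Lemma \ref{lem-h-asym} is used in an essential, quantitative way, since the crude bound \eqref{eq-g-1} alone does not immediately give the decay of the regular part $\widetilde H$ itself. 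Once these two uniform ingredients are in place, the compactness-and-uniqueness argument is routine.
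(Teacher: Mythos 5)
Your approach is genuinely different from the paper's. The paper writes $R_\kappa := W_0 - W_\kappa$ and, rather than a compactness argument, decomposes $R_\kappa = \mathcal{R}^1_\kappa + \mathcal{R}^2_\kappa$ on $\Omega_\kappa$ with $\mathcal{R}^1_\kappa$ solving the Poisson problem with the error right-hand side (built from Lemma~\ref{lem-h-asym}'s quantitative remainders $T_\kappa,\widetilde T_\kappa$) and zero boundary data, and $\mathcal{R}^2_\kappa$ harmonic with boundary data $R_\kappa|_{\partial\Omega_\kappa}$. It then shows $\mathcal{R}^1_\kappa\to 0$ uniformly by an explicit estimate of the Green's representation $\int_{\Omega_\kappa}G_\kappa(x,w)(-\Delta R_\kappa)(w)\,dw$, split into near and far regions, and shows $\mathcal{R}^2_\kappa\to 0$ via the maximum principle after verifying the boundary trace of $W_\kappa$ on $\partial\Omega_\kappa$ (which is \emph{explicit} since $\widetilde G$ vanishes there) converges to the boundary trace of $W_0$. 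The $C^0\Rightarrow C^1$ upgrade is done as you propose, via uniform $L^{n+\alpha}_{loc}$ bounds on $-\Delta R_\kappa$. This buys the paper two things that sidestep what you flag as your ``main obstacle'': the harmonic part is controlled by the \emph{boundary} sup alone (so no decay of $\widetilde H$ at spatial infinity is needed), and the Poisson part is controlled by direct integration (so no abstract limit identification is needed).

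Within your own argument there is one real gap. You extract a $C^1$-convergent subsequence only on the fixed interior ball $B(e_n,1/3)$, which does not touch $\partial\mathbb{R}^n_+$ for small $\kappa$, and then claim to identify the limit $\phi_*$ as zero using its ``vanishing boundary trace'' and decay at infinity. But a harmonic function on $B(e_n,1/3)$ carries no boundary or decay information, so the identification step fails as written. To repair it you would have to run the compactness uniformly up to $\partial\Omega_\kappa$ on the growing domain $\Omega_\kappa\cap B_R$ (requiring uniform-in-$\kappa$ boundary Schauder or $W^{2,s}$ estimates in the rescaled charts), obtain a limit $\phi_*$ harmonic on $\mathbb{R}^n_+\cap B_R$ with zero trace on $\{z_n=0\}$ and bounded by $CR^{2-(n-2)p}$ on $\partial B_R$, and then send $R\to\infty$. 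This requires establishing the uniform decay $|W_\kappa(z)|\lesssim|z-e_n^*|^{2-(n-2)p}$ and a matching gradient bound that you identified as needing to be ``recorded''; these are not proved in the paper precisely because the decomposition into $\mathcal{R}^1_\kappa$ and $\mathcal{R}^2_\kappa$ renders them unnecessary. So your route is viable but strictly more work than the paper's, and the sketch as it stands does not yet close the limit-identification step.
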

\begin{proof}
By definition \eqref{eq-g-0} and the property \eqref{eq-tilh} of $\widetilde{H}_0$, the function $W_{\kappa}$ satisfies
\begin{equation}\label{eq-hd-5}
\begin{split}
-\Delta_w W_{\kappa} (w)& = {\kappa}^{p(n-2)} (-\Delta \widetilde{H}) ( {\kappa}w, {\kappa}e_n)
\\
& = {\kappa}^{p(n-2)} \left[ G^p ({\kappa}w, {\kappa}e_n) - \frac{c_n^p}{|{\kappa}(w-e_n)|^{(n-2)p}} + p H( {\kappa}w, {\kappa}e_n) c_n^{p-1} \frac{1}{|{\kappa}(w-e_n)|^{(n-2)(p-1)}}\right.
\\
&\qquad \left. + \frac{2p c_n^{p-1} \nabla_1 H({\kappa}w, {\kappa}e_n)}{[(n-2)(p-2)-2]} \frac{(-1)({\kappa}(w-e_n))}{|{\kappa}(w-e_n)|^{(n-2)(p-1)}}\right].
\end{split}
\end{equation}
Set the difference $R_{\kappa} : \Omega_{\kappa} \rightarrow \mathbb{R}$ by  $R_{\kappa} (x)= W_0 (x)- W_{\kappa} (x)$ for $x \in \Omega_{\kappa}$. Then, it suffices to show that $R_{\kappa} \rightarrow 0$ in $C_{loc}^1 (\mathbb{R}^{n+1}_{+})$. By \eqref{eq-hd-5} and \eqref{eq-h0-1} we have 
\begin{equation}\label{eq-hd-6}
\begin{split}
&(-\Delta_w) R_{\kappa} (w) 
\\
&= {\kappa}^{p(n-2)} G^p ({\kappa}w, {\kappa}e_n) - \left( \frac{c_n}{|w-e_n|^{n-2}} - \frac{c_n}{|w+e_n|^{n-2}}\right)^p
\\
&\quad + p H({\kappa}w, {\kappa}e_n ) c_n^{p-1} \frac{{\kappa}^{n-2}}{|w-e_n|^{(n-2)(p-1)}} 
\\
&\quad + \frac{2p \nabla_1 H({\kappa}w, {\kappa}e_n) c_n^{p-1}}{[(n-2)(p-2)-2]} (-1) ({\kappa}w-{\kappa}e_n) |{\kappa}w -{\kappa}e_n|^{-(n-2)(p-1)}{\kappa}^{(n-2)p}
\\
&\quad - p \frac{c_n^p}{|w+e_n|^{n-2}}\cdot \frac{1}{|w-e_n|^{(n-2)(p-1)}} - \frac{2p(n-2)c_n^p}{[(n-2)(p-2)-2]} \frac{w+e_n}{|w+e_n|^n} \frac{w-e_n}{|w-e_n|^{(n-2)(p-1)}}.
\end{split}
\end{equation}
By Lemma \ref{lem-h-asym} we have
\begin{equation*}
\left\{\begin{aligned}
{\kappa}^{n-2}H({\kappa}w,{\kappa}e_n) &= \frac{c_n}{|w+e_n|^{n-2}} + T_{\kappa} (w),
\\
{\kappa}^{n-1}\nabla_1 H({\kappa}w,{\kappa}e_n)&= -\frac{c_n (n-2)(w+e_n)}{|w+e_n|^n} + \widetilde{T}_{\kappa} (w),
\end{aligned}\right.
\end{equation*}
where 
\begin{equation}\label{eq-hd-11} 
T_{\kappa} (w) = O \left( \frac{\textbf{d}({\kappa}e_n)}{|w+e_n|^{n-2}}\right)
\quad \textrm{and}\quad \widetilde{T}_{\kappa} (w) =  O \left( \frac{{\kappa}\cdot \mathbf{d}({\kappa} e_n)}{\mathbf{d}({\kappa}w)|w+e_n|^{n-2}}\right).
\end{equation}
Also we have
\begin{equation*}
{\kappa}^{n-2}G({\kappa}w,{\kappa}e_n) = \frac{c_n}{|w-e_n|^{n-2}} - \frac{c_n}{|w+e_n|^{n-2}} - T_{\kappa} (w).
\end{equation*}
Inserting these formulas into \eqref{eq-hd-6} and arranging them, we get
\begin{equation*}
\begin{split}
&(-\Delta_w) R_{\kappa} (w) 
:= I_1 (w) + I_2 (w),
\end{split}
\end{equation*} 
where
\begin{equation}\label{eq-hd-3}
\begin{split}
I_1 (w)&= \left( \frac{c_n}{|w-e_n|^{n-2}} - \frac{c_n}{|w+e_n|^{n-2}} - T_{\kappa} (w)\right)^p - \left( \frac{c_n}{|w-e_n|^{n-2}}-\frac{c_n}{|w+e_n|^{n-2}}\right)^p 
\\
&\quad+ pT_{\kappa} (w) \frac{c_n^{p-1}}{|w-e_n|^{(n-2)(p-1)}}
\end{split}
\end{equation}
and
\begin{equation}\label{eq-hd-14}
I_2 (w)= {2p(n-2)}{c_n}^{p-1} \widetilde{T}_{\kappa} (w) \frac{w-e_n}{|w-e_n|^{(n-2)(p-1)}}.
\end{equation}
We split the function $R_{\kappa}$ into $R_{\kappa} = \mathcal{R}_{\kappa}^1 + \mathcal{R}_{\kappa}^2$, where 
\begin{equation}\label{eq-hd-10}
\left\{ \begin{array}{ll} -\Delta \mathcal{R}_{\kappa}^1 (x) = -\Delta{R}_{\kappa} (x)&\quad \textrm{in}~\Omega_{\kappa},
\\
\mathcal{R}_{\kappa}^1 (x) =0&\quad \textrm{on}~\partial \Omega_{\kappa},
\end{array}
\right.
\quad \textrm{and}\quad 
\left\{ \begin{array}{ll} -\Delta \mathcal{R}_{\kappa}^2 (x) =0&\quad \textrm{in}~\Omega_{\kappa},
\\
\mathcal{R}_{\kappa}^2 (x) = \mathcal{R}_{\kappa} (x)&\quad \textrm{on}~\partial \Omega_{\kappa}.
\end{array}
\right.
\end{equation}
For the proof, first we shall show that $\mathcal{R}_{\kappa}^1 (x)$ and $\mathcal{R}_{\kappa}^2 (x)$ converge to zero in $C_0 (B(e_n, 1/3))$.

\

\noindent \textbf{$\bullet$ $C^0$ convergence of $\mathcal{R}_{\kappa}^1$.} We shall show that $R_{\kappa}^1 (x) \rightarrow 0$ in $C^1 (B(e_n,1/3))$. By \eqref{eq-hd-10} we have
\begin{equation}\label{eq-hd-12}
\begin{split}
\mathcal{R}_{\kappa}^1 (x) &=\int_{\Omega_{\kappa}} G_{\kappa} (x,w) (-\Delta)R_{\kappa} (w) dw 
\\
&= \int_{|w-e_n| \leq \frac{1}{2}}G_{\kappa} (x,w) (I_1 + I_2)(w) dw + \int_{|w-e_n| > \frac{1}{2}} G_{\kappa} (x,w) (I_1 + I_2)(w) dw.
\end{split}
\end{equation}
We aim to show that the above value goes to zero as ${\kappa} \rightarrow 0$ uniformly for $x$ in any given compact set.
\

Let us consider first the integration on the region $|w-e_n| \leq \frac{1}{2}$. For $r \in (0,1)$ and $|a| \leq \frac{1}{2}$ we have $(1+ar)^p = 1 + p ar + O((ar)^2)$, which leads to
\begin{equation}\label{eq-hd-1}
\left( \frac{1}{r} + a\right)^p = \frac{1}{r^p} + \frac{ap}{r^{p-1}} + O (a^2 r^{2-p})\quad \forall r \in (0,1)\quad a \in (-\frac{1}{2}, \frac{1}{2}).
\end{equation}
For $|w-e_n| \leq \frac{1}{2}$ we have
\begin{equation*}
\frac{1}{|w-e_n|^{n-2}} - \frac{1}{|w+e_n|^{n-2}} \geq \frac{1}{2|w-e_n|^{n-2}}\geq 2^{n-3}.\end{equation*}
Hence we may apply \eqref{eq-hd-1} to get
\begin{equation*}
\begin{split}
& \left[ \frac{c_n}{|w-e_n|^{n-2}} - \frac{c_n}{|w+e_n|^{n-2}} - T_{\kappa} (w)\right]^p
\\
&=\left[ \frac{c_n}{|w-e_n|^{n-2}} - \frac{c_n}{|w+e_n|^{n-2}} \right]^p - p T_{\kappa} (w)\left[ \frac{c_n}{|w-e_n|^{n-2}} - \frac{c_n}{|w+e_n|^{n-2}}\right]^{p-1}
\\
&\quad + O\left(T_{\kappa} (w)^2 |w-e_n|^{(n-2)(2-p)}\right).
\end{split}
\end{equation*}
Putting this into \eqref{eq-hd-3} we get 
\begin{equation}\label{eq-hd-4}
\begin{split}
I_1 (w) & = p T_{\kappa} (w)\left[ \frac{c_n^{p-1}}{|w-e_n|^{(n-2)(p-1)}} - \left(  \frac{c_n}{|w-e_n|^{n-2}} - \frac{c_n}{|w+e_n|^{n-2}}\right)^{p-1}\right]
\\
&\quad + O\left(T_{\kappa} (w)^2 |w-e_n|^{(n-2)(2-p)}\right).
\end{split}
\end{equation}
We have $(1+ar)^{p-1} = 1 + O(r)$ for $r \in (0,\frac{1}{4}$ and $a \in (-2,2)$. Hence
\begin{equation*}
\left( \frac{1}{r}+a\right)^{p-1} = \frac{1}{r^{p-1}} + O(r^{p-2})\quad \forall a \in(-2,2)\quad \textrm{and}\quad r \in (0, \frac{1}{4}).
\end{equation*}
Using this and \eqref{eq-hd-11} we can estimate \eqref{eq-hd-4} as
\begin{equation}\label{eq-hd-7}
\begin{split}
I_1 (w)&\leq C T_{\kappa} (w)  |w-e_n|^{(n-2)(2-p)} + O(T_{\kappa} (w)^2 |w-e_n|^{(n-2)(2-p)})
\\
&\leq \frac{{\kappa}}{|w+e_n|^{n-2}} |w-e_n|^{(n-2)(2-p)} = O(1).
\end{split}
\end{equation}
Therefore
\begin{equation*}
\begin{split}
& \left|\int_{B(e_n, \frac{1}{2})} G(x,w) I_1 (w) dw\right|
\\
&\quad \leq {\kappa} \int_{B(e_n, \frac{1}{2})} \frac{1}{|x-w|^{n-2}} \frac{1}{|w+e_n|^{n-2}} |w-e_n|^{(n-2)(2-p)} dw,
\end{split}
\end{equation*}
which converges to zero as ${\kappa} \rightarrow 0$. 
For $I_2$, we have the bound
\begin{equation*}
\left| \int_{|w-e_n| \leq \frac{1}{2}} G_k (x,w) I_2 (w) dw \right| \leq C \int_{B(e_n, \frac{1}{2})} \frac{1}{|x-w|^{n-2}} \frac{{\kappa}}{|w+e_n|^{n-2}} \frac{|w-e_n|}{|w-e_n|^{(n-2)(p-1)}} dw,
\end{equation*}
which goes to zero as ${\kappa} \rightarrow 0$ since $(n-2)+(n-2)(p-1) = p(n-2) <n$. Thus we have 
\begin{equation*}
\lim_{{\kappa} \rightarrow 0} \int_{|w-e_n| \leq \frac{1}{2}} G_{\kappa} (x,w) (-\Delta) R_{\kappa} (w) dw = 0.
\end{equation*}
Next we turn to estimate the second integration of \eqref{eq-hd-12}. First we note that there is a constant $C>1$ such that 
\begin{equation*}
\frac{1}{C}|w-e_n| \leq |w +e_n| \leq C|w-e_n|\quad \textrm{for}~ w \in B\left(e_n, \frac{1}{2}\right)^{c}.
\end{equation*}
Hence we easily see from \eqref{eq-hd-3} that
\begin{equation*}
I_1 (w) \leq CT_{\kappa} (w) \frac{1}{|w+e_n|^{(n-2)(p-1)}}.
\end{equation*}
Using this and \eqref{eq-hd-11} we get 
\begin{equation*}
\int_{|w-e_n| \geq \frac{1}{2}} G_{\kappa} (x,w)I_1 (w) dw \leq C {\kappa}\int_{B(e_n, \frac{1}{2})^{c}}  \frac{1}{|w+e_n|^{(n-2)p}} \frac{1}{|x-w|^{n-2}} dw,
\end{equation*}
which goes to zero as ${\kappa} \rightarrow 0$ because $p  > \frac{2}{n-2}$. Next we use \eqref{eq-hd-11} to estimate
\begin{equation*}
\begin{split}
& \int_{|w-e_n| \geq \frac{1}{2}} G_{\kappa} (x,w) I_2 (w) dw 
\\
&\leq C \int_{B(e_n, 1/2)^{c}} \frac{{\kappa}\cdot {\kappa}}{|w+e_n|^{n-2}\mathbf{d}({\kappa}w)} \frac{|w+e_n|}{|w+e_n|^{(n-2)(p-1)}} \frac{\mathbf{d}({\kappa}x)\mathbf{d}({\kappa}w)}{{\kappa}^2 |x-w|^{n}} dw
\\
&= C \mathbf{d}({\kappa}x)\int_{B(e_n, 1/2)^{c}} \frac{1}{|w+e_n|^{n-2}} \frac{|w+e_n|}{|w+e_n|^{(n-2)(p-1)}} \frac{1}{ |x-w|^{n}} dw,
\end{split}
\end{equation*}
which goes to zero as ${\kappa} \rightarrow 0$. Thus we have
\begin{equation*}
\lim_{{\kappa} \rightarrow 0} \int_{\Omega_{\kappa}} G_{\kappa} (x,w) (-\Delta)R_{\kappa} (w) dw = 0.
\end{equation*}
\textbf{$\bullet$ $C^0$ convergence of $\mathcal{R}_{\kappa}^2$.}
Since $\mathcal{R}_{\kappa}^2$ is harmonic in $\Omega_{\kappa}$, we only need to show that 
\begin{equation*}
\lim_{{\kappa} \rightarrow 0} \sup_{x \in \Omega_{\kappa}} |\mathcal{R}_{\kappa}^2 (x)| = 0.
\end{equation*}
Recall that the boundary $\partial \Omega \cap B(0,r)$ is assumed to be parametrized by
\begin{equation*} 
(\bar{z}, f(\bar{z}))\quad \textrm{for}~\bar{z}=(z_1,\cdots, z_{n-1}),
\end{equation*}
where $f :\mathbb{R}^{n-1}\rightarrow \{ y \geq 0\}$ satisfies $f(0)=0$ and $\nabla f(0)  =0$. Consequently, we can parametrize  boundary point $z_{\kappa}$ on $\partial \Omega_{\kappa}$ by 
\begin{equation*}z_{\kappa}=(z_1, \cdots, z_{n-1}, f({\kappa} z)/{\kappa}) = (\bar{z}, f({\kappa}\bar{z})/{\kappa})
\end{equation*}
Using this and \eqref{eq-h1-2} we have
\begin{equation}\label{eq-hd-13}
\begin{split}
W_{\kappa} (z_{\kappa}) &= \widetilde{H} (({\kappa}z_1, \cdots, {\kappa}z_{n-1}, f({\kappa} z)), {\kappa}e_n ) {\kappa}^{p(n-2)-2}
\\
& = {\kappa}^{p(n-2)-2} \left[ \alpha_1 \frac{{\kappa}^{-[(n-2)p-2]}}{|(z-e_n)|^{(n-2)p-2}}- \alpha_2 p H(({\kappa}z, f({\kappa}z)), {\kappa}_n) c_n^{-1}  \frac{{\kappa}^{-[(n-2)(p-1)-2]}}{|(z-e_n)|^{(n-2)(p-1)-2}}.\right]
\\
& = \left[ \frac{\alpha_1}{|(z-e_n)|^{(n-2)p-2}}- \alpha_2 p \cdot {\kappa}^{n-2}H(({\kappa}z, f({\kappa}z)), {\kappa}_n) c_n^{-1}  \frac{1}{|(z-e_n)|^{(n-2)(p-1)-2}}.\right]
\end{split}
\end{equation}
Since $f({\kappa}z)= O(({\kappa}z)^2)$, we have $\lim_{{\kappa} \rightarrow 0} z_{\kappa} =(\bar{z},0)$. Combining this with \eqref{eq-h1-4} we find
\begin{equation*}
\lim_{{\kappa} \rightarrow 0} - {\kappa}^{(n-2)} H(({\kappa}z, f({\kappa}z)), {\kappa}_n)= \frac{c_n}{|((\bar{z},0)-e_n)|^{n-2}}.
\end{equation*}
Injecting this into \eqref{eq-hd-13} we get
\begin{equation*}
\lim_{{\kappa} \rightarrow 0} W_{\kappa} (z_{\kappa})=\left\{\begin{array}{ll} \left( \alpha_1 -\alpha_2\right) \frac{1}{|((\bar{z},0)-e_n)|^{(n-2)p-2}}& \textrm{if}~ p \in \left[ \frac{n-1}{n-2}, \frac{n}{n-2}\right]
\\
\frac{\alpha_1}{|((\bar{z},0)-e_n)|^{(n-2)p-2}}&\quad \textrm{if}~ p \in \left[ 1, \frac{n-1}{n-2}\right),
\end{array}
\right.
\end{equation*}
which implies
\begin{equation*}
\begin{split}
\lim_{{\kappa} \rightarrow 0} \sup_{z_{\kappa} \in \partial \Omega_{\kappa} \cap B(0, r/{\kappa})} |\mathcal{R}_{\kappa}^2 (z_{\kappa}) |&= \lim_{{\kappa} \rightarrow 0}\sup_{z_{\kappa} \in \partial \Omega_{\kappa} \cap B(0, r/{\kappa})} |R_{\kappa} (z_{\kappa})|
\\
&= \lim_{{\kappa} \rightarrow 0}\sup_{z_{\kappa} \in \partial \Omega_{\kappa} \cap B(0, r/{\kappa})} |W_0 (z_{\kappa}) - W_{\kappa} (z_{\kappa})| = 0.
\end{split}
\end{equation*}
This implies that $\lim_{{\kappa} \rightarrow 0}\sup_{z \in \Omega_{\kappa}} |\mathcal{R}_{\kappa}^2(z)| =0$ since $\mathcal{R}_{\kappa}^2$ is harmonic in $\Omega_{\kappa}$.
\

Combining the above two convergence results, we can deduce that $R_{\kappa} (x) \rightarrow 0$ uniformly for $|x-e_n| < \frac{1}{3}$. 

\

\noindent \textbf{$\bullet$ The $C^1$ convergence of $R_{\kappa}$.} 
From \eqref{eq-hd-7} and \eqref{eq-hd-14} we know that
\begin{equation*}
(-\Delta) R_{\kappa} (x) = (I_1 + I_2) (x) = O \left( \frac{1}{|x-e_n|^{(n-2)(p-1)-1}}\right) \quad \textrm{for}~ |x-e_n| \leq \frac{1}{2}.
\end{equation*}
This estimate implies $(-\Delta) R_{\kappa} (x) \in L^{n+\alpha}(B(e_n,1/3))$ for some $\alpha >0$ since $(n-2)(p-1)-1 <1$ for $p < \frac{n}{n-2}$. Therefore  $R_{\kappa}$ is contained in $C^{1,\beta} (B(e_n,1/4))$ uniformly in ${\kappa}>0$ for some $\beta >0$. Thus $R_{\kappa}$ converges to some function $f$ in $C^1 (B(e_n, 1/4))$. Actually we have $f\equiv 0$ since $R_{\kappa}$ converges to $0$ in $C^0 (B(e_n,1/2))$.
The lemma is proved.
\end{proof}
\begin{lem}\label{lem-h0}~
\begin{enumerate}
\item Assume $p \in \left[ 1, \frac{n-1}{n-2}\right)$. Then we have $\frac{\partial}{\partial x_n} W_0 (e_1) \neq 0$ if and only if \eqref{eq-as-0} holds
\item Assume $p \in \left[ \frac{n-1}{n-2}, \frac{n}{n-2}\right).$ Then we have $\frac{\partial}{\partial x_n} W_0 (e_1) \neq 0$ if and only if \eqref{eq-as-1} holds.
\end{enumerate} 
\end{lem}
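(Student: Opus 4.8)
The plan is to evaluate $\partial_{x_n}W_0$ at the point $e_n$ (the image of the pole $\kappa e_n$ under the rescaling \eqref{eq-g-0}) by means of the explicit Green representation of $W_0=\widetilde H_0(\cdot,e_n)$ on the half-space. By \eqref{eq-h0-1}--\eqref{eq-h0-2} the function $W_0$ solves $-\Delta W_0=F$ in $\mathbb{R}^n_+$ and $W_0=g$ on $\partial\mathbb{R}^n_+$, where $F$ and $g$ are the explicit source and boundary datum displayed there, and $W_0$ decays at infinity (it is this decay, together with the local $C^1$ bound, that makes Lemma \ref{lem-hd} work). Denoting by $G_{\mathbb{R}^n_+}(\xi,w)=c_n\bigl(|\xi-w|^{-(n-2)}-|\xi-w^{*}|^{-(n-2)}\bigr)$ the Dirichlet Green function of $-\Delta$ on $\mathbb{R}^n_+$ and by $P_{\mathbb{R}^n_+}=-\partial_{\nu_w}G_{\mathbb{R}^n_+}$ the corresponding Poisson kernel, the Green potential of $F$ plus the Poisson integral of $g$ solves the same problem and decays, so by uniqueness of decaying solutions in $\mathbb{R}^n_+$ (odd reflection across $\partial\mathbb{R}^n_+$ followed by Liouville) one gets
\[
W_0(\xi)=\int_{\mathbb{R}^n_+}G_{\mathbb{R}^n_+}(\xi,w)\,F(w)\,dw+\int_{\partial\mathbb{R}^n_+}P_{\mathbb{R}^n_+}(\xi,w)\,g(w)\,dS_w .
\]

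I would then differentiate this identity in $\xi_n$ and set $\xi=e_n$. The key point is the identity
\[
\partial_{\xi_n}G_{\mathbb{R}^n_+}(\xi,w)\Big|_{\xi=e_n}=-(n-2)c_n\left(\frac{1-w_n}{|w-e_n|^{n}}-\frac{1+w_n}{|w+e_n|^{n}}\right),
\]
whose right-hand side is precisely the weight occurring in \eqref{eq-as-0}--\eqref{eq-as-1}. Inserting the explicit form of $F$, the volume term becomes a fixed nonzero constant $c$ times the left-hand side of \eqref{eq-as-0} when $p\in\left[1,\frac{n-1}{n-2}\right)$, and when $p\in\left[\frac{n-1}{n-2},\frac{n}{n-2}\right)$ the two correction terms in \eqref{eq-h0-1} supply exactly the extra terms inside the curly bracket of \eqref{eq-as-1}. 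Differentiating the Poisson integral, using $\partial_{\xi_n}P_{\mathbb{R}^n_+}$ and the power datum $g$, produces $-c$ times the right-hand side of \eqref{eq-as-0} (resp.\ \eqref{eq-as-1}); the cancellation of the two constants $\pm c$ is not accidental but is forced by the normalization \eqref{eq-alpha} of $\alpha_1$ (and $\alpha_2$), which is exactly what arranges the singular part of $F$. Thus $\partial_{x_n}W_0(e_n)$ equals $c$ times the difference of the two sides of \eqref{eq-as-0} (resp.\ \eqref{eq-as-1}), and the stated equivalence follows. Finally $W_0$ is even in $z_1,\dots,z_{n-1}$ by the symmetry of $F$, $g$ and $\mathbb{R}^n_+$ under the reflections $z_i\mapsto-z_i$ ($i<n$), so $\partial_{z_i}W_0(e_n)=0$ for $i<n$; combined with Lemma \ref{lem-hd} and the scaling \eqref{eq-g-0}, this is what yields \textbf{(A1)} of Theorem \ref{thm-3}.

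The main obstacle is the analytic justification of the representation formula on the unbounded domain $\mathbb{R}^n_+$: absolute convergence of both integrals, and the identification of $\widetilde H_0(\cdot,e_n)$, which a priori is a difference of explicit singular profiles minus a Riesz-type potential, with the unique decaying solution of \eqref{eq-h0-1}--\eqref{eq-h0-2}. At infinity the convergence follows from the decay $F(w)=O\bigl(|w|^{-(n-2)p}\bigr)$ and $g(w)=O\bigl(|w|^{-((n-2)p-2)}\bigr)$, using $p\ge1>\frac{2}{n-2}$. Near $w=e_n$ the situation is delicate, and here the choice of $\alpha_1$ (and of $\alpha_2$ in the second regime) in \eqref{eq-alpha} is essential: it reduces the singularity of $F$ at $e_n$ to order $|w-e_n|^{-(n-2)(p-1)}$, respectively $|w-e_n|^{-(n-2)(p-1)+1}$, which is exactly what makes the product with $\partial_{\xi_n}G_{\mathbb{R}^n_+}(\cdot,w)\big|_{\xi=e_n}=O\bigl(|w-e_n|^{1-n}\bigr)$ integrable when $p<\frac{n-1}{n-2}$, respectively $p<\frac{n}{n-2}$ --- and this is also why the curly brackets in \eqref{eq-as-0}--\eqref{eq-as-1} have to be written in that grouped form. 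Once these convergences are secured, the remaining work in the middle paragraph is the direct evaluation of elementary Beta-type integrals over $\mathbb{R}^n_+$ and $\partial\mathbb{R}^n_+$, with no further conceptual difficulty; morally it is the same mechanism as the Pohozaev small-sphere limits of Sections 5--7, now phrased through the half-space Green function.
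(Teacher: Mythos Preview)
Your approach is correct and is essentially the same as the paper's: both represent $W_0$ via the half-space Green function plus Poisson integral, differentiate in $x_n$, and evaluate at $e_n$ to read off the two sides of \eqref{eq-as-0} (resp.\ \eqref{eq-as-1}). If anything you are more careful than the paper, which simply writes the representation and computes, while you also justify the convergence near $e_n$ and at infinity and the uniqueness of the decaying solution; the symmetry remark on $\partial_{z_i}W_0(e_n)=0$ for $i<n$ is an addition not present in the paper's proof of this lemma.
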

\begin{proof}
We shall only prove the second statement since the same argument applies for the first statement. For the proof, we recall that the explicit formula of Green's function on the half space $\mathbb{R}^{n}_{+}$ is given by
\begin{equation*}
 \left( \frac{c_n}{|x-y|^{n-2}} - \frac{c_n}{|x-y^*|^{n-2}}\right)
\end{equation*}
and the Poisson kernel is equal to
\begin{equation*}
(n-2)c_n \frac{2x_n}{|x-y|^{n}}
\end{equation*}
for $y \in \partial \mathbb{R}^n_{+}$. Therefore, reminding \eqref{eq-h0-1} and \eqref{eq-h0-2}, we find
\begin{equation*}
\begin{split}
W_0 (x)&= \int_{\mathbb{R}^{n}_{+}} \left( \frac{c_n}{|z-x|^{n-2}} - \frac{c_n}{|z-x^*|^{n-2}}\right) (-\Delta_z W_0 )(z) dz 
\\
&\quad + \int_{\partial \mathbb{R}^n_{+}} (n-2)c_n\frac{2x_n}{|x-y|^n} \frac{\alpha_1 -\alpha_2}{|(y-e_n)|^{(n-2)p-2}} dy
\\
&:= I_1 (x) + I_2 (x).
\end{split}
\end{equation*}
First we compute
\begin{equation*}
\frac{\partial I_2}{\partial x_n}(x) = (n-2)c_n \int_{\partial \mathbb{R}^n_{+}} \left( \frac{2}{|x-y|^{n}} - 2n \frac{x_n^2}{|x-y|^{n+2}} \right) \frac{\alpha_1 -\alpha_2}{|(y-e_n)|^{(n-2)p-2}} dy.
\end{equation*}
Taking $x=e_n$ here, we find
\begin{equation*}
\frac{\partial I_2}{\partial x_n}(e_n) = (n-2)c_n (\alpha_1 - \alpha_2) \int_{\mathbb{R}^n_{+}} \left( \frac{2}{|(y-e_n)|^{(n-2)(p+1)}} - \frac{2n}{|(y-e_n)|^{(n-2)p+n}}\right) dy.
\end{equation*}
Next we use \eqref{eq-h0-1} to get
\begin{equation*}
\begin{split}
\frac{\partial I_1}{\partial x_n}(x) &= -\int_{\mathbb{R}^{n}_{+}} (n-2)c_n \left( \frac{(x_n - z_n)}{|x - z|^{n}} - \frac{(x_n +  z_n)}{|z-x^*|^{n-2}}\right)
\\
&\qquad \times \left[\left( \frac{1}{|z-e_n|^{n-2}} - \frac{1}{|z+e_n|^{n-2}}\right)^p - \frac{1}{|z-e_n|^{(n-2)p}} + p \frac{1}{|z-e_n|^{(n-2)(p-1)}} \cdot \frac{1}{|z+e_n|^{n-2}}\right.
\\
&\quad \quad\quad+\left. \frac{2p(n-2)}{[(n-2)(p-2)-2]} \frac{(z+e_n)}{|z+e_n|^n} \frac{z-e_n}{|z-e_n|^{(n-2)(p-1)}}\right] dz.
\end{split}
\end{equation*}
Letting $x=e_n$ we obtain 
\begin{equation*}
\begin{split}
\frac{\partial I_1}{\partial x_n}(e_n) &= -\int_{\mathbb{R}^{n}_{+}}(n-2)c_n \left( \frac{(1- z_n)}{|e_n - z|^{n}} - \frac{(1 +  z_n)}{|z+e_n|^{n-2}}\right)
\\
&\qquad \times \left[\left( \frac{1}{|z-e_n|^{n-2}} - \frac{1}{|z+e_n|^{n-2}}\right)^p - \frac{1}{|z-e_n|^{(n-2)p}} + p \frac{1}{|z-e_n|^{(n-2)(p-1)}} \cdot \frac{1}{|z+e_n|^{n-2}}\right.
\\
&\quad \quad\quad+\left. \frac{2p(n-2)}{[(n-2)(p-2)-2]} \frac{(z+e_n)}{|z+e_n|^n} \frac{z-e_n}{|z-e_n|^{(n-2)(p-1)}}\right] dz.
\end{split}
\end{equation*}
The above estimates shows that $\frac{\partial}{\partial x_n} W_0 (e_1) \neq 0$ holds provided by this
the integration \eqref{eq-as-1} is not zero. The lemma is proved.
\end{proof}
\begin{proof}[Proof of Lemma \ref{lem-as}]
The proof follows immediately from combining Lemma \ref{lem-hd} and Lemma \ref{lem-h0}.
\end{proof}
Now we shall finish the proof of Theorem \ref{thm-3}. 
\begin{proof}[Proof of Theorem \ref{thm-3}]
For fixed dimension $n > 4$ we denote by $W_{0,p}$ the function $W_0$ defined in \eqref{eq-h0-1} for given $p$. In Lemma \ref{lem-h0} we confirmed that \eqref{eq-as-0} holds if and only if $\frac{\partial}{\partial x_n} W_{0,p}(e_n) \neq 0$. Having this in mind, to prove the theorem, we shall prove that $\frac{\partial}{\partial x_n}W_{0,p} (e_n) < 0$ holds for $p =1$ and show the continuity property of the integrations in \eqref{eq-as-0} with respect to $p \in \left[1, \frac{n-1}{n-2}\right)$. This implies  an existence of a value $\delta >0$ such that $\frac{\partial}{\partial x_n} W_{0,p} (e_n) < 0$ for $p \in [1, 1+\delta]$. 
\

\noindent \textbf{Continuity of the integrations of \eqref{eq-as-0} with respect to $p$.} We aim to check that the derivation of the integration values of \eqref{eq-as-0} with respect to $p$ is bounded uniformly for $p \in [1,1+\delta)$ for small $\delta >0$. For this, we let
\begin{equation*}
F(p) =\int_{\mathbb{R}^{n}_{+}} \left[ \frac{(1-z_n)}{|z-e_n|^{n}} - \frac{(1+z_n)}{|z+e_n|^n}\right]\left\{ \left( \frac{1}{|z-e_n|^{n-2}} - \frac{1}{|z+e_n|^{n-2}}\right)^p - \frac{1}{|z-e_n|^{(n-2)p}}\right\}dz
\end{equation*}
and
\begin{equation*}
 G(p)= (n-2)\alpha_1\left[\int_{\partial \mathbb{R}^n_{+}} \frac{2}{|(y-e_n)|^{(n-2)(p+1)}} dy - \int_{\mathbb{R}^{n}_{+}} \frac{2n}{|(y-e_n)|^{(n-2)p+n}} dy\right].
\end{equation*}
Let $f(a) = a^p \log a$. Then, 
\begin{equation*}
\begin{split}
F' (p)&=\int_{\mathbb{R}^{n}_{+}} \left[ \frac{(1-z_n)}{|z-e_n|^{n}} - \frac{(1+z_n)}{|z+e_n|^n}\right]\left\{ f \left(\frac{1}{|z-e_n|^{n-2}} - \frac{1}{|z+e_n|^{n-2}}\right) - f\left(\frac{1}{|z-e_n|^{(n-2)p}}\right)\right\}dz
\\
& = \int_{|z-e_n| \leq \frac{1}{2}} dz + \int_{|z-e_n| \geq \frac{1}{2}} dz.
\end{split}
\end{equation*}
Since there is no singularity in the region $|z-e_n| \geq \frac{1}{2}$ and decaying is good enough, we have $\int_{|z-e_n| \geq \frac{1}{2}} dz = O(1)$. For $|z-e_n| \leq \frac{1}{2}$ we use the mean value theorem to see that
\begin{equation*}
\begin{split}
&f \left(\frac{1}{|z-e_n|^{n-2}} - \frac{1}{|z+e_n|^{n-2}}\right) - f\left(\frac{1}{|z-e_n|^{(n-2)p}}\right)
\\
&\qquad\qquad\leq \frac{C}{|z+e_n|^{n-2}} \left[ \frac{1}{|z-e_n|^{(n-2)(p-1)}} \log \left( \frac{1}{|z-e_n|}\right)\right].
\end{split}
\end{equation*}
Using this we can estimate
\begin{equation*}
\int_{|z-e_n| \leq \frac{1}{2}} \frac{C}{|z-e_n|^{(n-1)+(n-2)(p-1)}} dz,
\end{equation*}
which is bounded uniformly for $p \in \left[1, \frac{n-1}{n-2}-\delta\right)$ with any fixed $\delta >0$. As for $G' (p)$, it is much easier to check the integration is uniformly bounded since there is no singularity in the integrand. Therefore $F'(p)$ and $G'(p)$ are uniformly bounded for $p \in \left[1,\frac{n-1}{n-2}-\delta\right)$ and so $F(p)$ and $G(p)$ are continuous in the interval.

\

\noindent \textbf{The condition \eqref{eq-as-0} holds for $p=1$.} Note that for $p=1$ the function $\widetilde{H}_0 (x,y)$ is given by
\begin{equation}\label{eq-eh-1}
\widetilde{H}_0 (x,y) = \frac{\alpha_1}{|x-y|^{(n-4)}}- \widetilde{G}_0 (x,y)\quad x \in \mathbb{R}^{n}_{+}.
\end{equation}
Here $\widetilde{G}_0 (x,y)$ is given by
\begin{equation*}
\left\{ \begin{array}{ll}-\Delta_x \widetilde{G}_0 (x,y) = G_0 (x,y),&\quad x \in \mathbb{R}^{n}_{+},
\\
\widetilde{G}_0 (x,y) =0&\quad x \in \mathbb{R}^{n-1} \times \{0\},
\end{array}
\right.
\end{equation*}
where 
\begin{equation*}
G_0 (x,y) = \frac{c_n}{|x-y|^{n-2}} - \frac{c_n}{|x-y^*|^{n-2}}.
\end{equation*}
It has the expression 
\begin{equation*}
\widetilde{G}_0 (x,y) = \int_{\mathbb{R}^{n}_{+}} G_0 (x,z) G_0 (z,y) dz.
\end{equation*}
Since $G_0 (x,y)$ is symmetric, i.e., $G_0 (x,y) = G_0 (y,x)$ for all $x, y \in \mathbb{R}^{n}_{+}$, so is $\widetilde{G}_0 (x,y)$. Thus, we see from \eqref{eq-eh-1} that $\widetilde{H}_0 (x,y) = \widetilde{H}_0 (y,x)$. 
\

We note that
\begin{equation}\label{eq-eh-2}
\begin{split}
\widetilde{H}_0 (t e_n, te_n) & =\int_{\mathbb{R}^n_{+}} \left( \frac{c_n}{|z-te_n|^{n-2}} - \frac{c_n}{|z+te_n|^{n-2}} \right) \frac{c_n}{|z+te_n|^{n-2}} dz
\\
&\quad +(n-2)c_n \int_{\partial \mathbb{R}^{n}_{+}}\frac{2t}{|te_n-y|^n} \frac{1}{2(n-4)}\frac{1}{|y-te_n|^{n-4}} dy
\\
& = t^{4-n} \int_{\mathbb{R}^n_{+}}\left( \frac{c_n}{|z-e_n|^{n-2}} - \frac{c_n}{|z+e_n|^{n-2}} \right) \frac{\alpha_1}{|z+e_n|^{n-2}} dz
\\
&\quad + t^{4-n}(n-2)c_n \int_{\partial \mathbb{R}^{n}_{+}} \frac{2}{n\alpha (n)} \frac{1}{|e_n-y|^n} \frac{1}{2(n-4)}\frac{1}{|y-e_n|^{n-4}} dy
\\
&= t^{4-n} \widetilde{H}_0 (e_n, e_n).
\end{split}
\end{equation}
Combining the symmetric property with \eqref{eq-eh-2}, we get
\begin{equation}\label{eq-eh-3}
\begin{split}
\left.\frac{\partial}{\partial x_n}W_0 (x) \right|_{x=e_n}&=
\left(\frac{\partial}{\partial x_n} \widetilde{H}_0 (x, e_n) \right)_{x=e_n}
\\
& =\frac{1}{2}\left( \frac{\partial}{\partial x_n} \widetilde{H}_0 (x,x)\right)_{x=e_n}
\\
& = \frac{1}{2}\left.\frac{\partial}{\partial t} \widetilde{H}_0 (te_n, te_n)\right|_{t=1} = \frac{(4-n)}{2} \widetilde{H}(e_n, e_n).
\end{split}
\end{equation}
It is easy to check from \eqref{eq-eh-1} that $(-\Delta)\widetilde{H}_0 >0$ in $\mathbb{R}^n_{+}$ and $\widetilde{H}_0 >0$ on $\partial \mathbb{R}^{n}_{+}$. Thus we have $\widetilde{H}_0 (e_n, e_n) \neq 0$. Combining this fact with \eqref{eq-eh-3} we deduce that $\frac{\partial}{\partial x_n} W_{0,1}(e_n) < 0$, which implies that \eqref{eq-as-0} holds for $p=1$. The proof is finished.
\end{proof}

\appendix

\section{The proof of Lemma \ref{lem-h-asym}}
In this appendix, we give the proof of Lemma \ref{lem-h-asym} stated as follows.
\begin{lem}
For $(x,y) \in \Omega \times \Omega$ we have
\begin{equation}\label{a-eq-h1-4}
H(x,y) = \frac{c_n}{|x-y^*|^{n-2}} + O \left( \frac{\mathbf{d}(y)}{|x-y^*|^{n-2}}\right),
\end{equation}
and
\begin{equation}\label{a-eq-h1-5}
\nabla_x H(x,y) = - \frac{(n-2)c_n (x-y^*)}{|x-y^*|^{n}} + O \left( \frac{\mathbf{d}(y)}{\mathbf{d}(x)|x-y^*|^{n-2}}\right).
\end{equation}
\end{lem}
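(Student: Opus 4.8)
The plan is to estimate $H(\cdot,y)$ by comparing it with the Green's function of a suitable half-space that approximates $\Omega$ near the closest boundary point to $y$, and to control the discrepancy by the maximum principle together with the smoothness of $\partial\Omega$. Fix $y\in\Omega$ and let $\bar y\in\partial\Omega$ be a closest boundary point, so $\mathbf{d}(y)=|y-\bar y|$; set $y^*=y+2\mathbf{d}(y)n_y$ as in the statement. After a translation and rotation we may assume $\bar y=0$, $n_y=-e_n$, $y=\mathbf{d}(y)e_n$, $y^*=-\mathbf{d}(y)e_n$, and $\partial\Omega$ near $0$ is a graph $z_n=\varphi(\bar z)$ with $\varphi(0)=0$, $\nabla\varphi(0)=0$, $|\varphi(\bar z)|\le C|\bar z|^2$. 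Let $\mh=\{z_n>0\}$ be the tangent half-space at $0$; its Green's function has regular part $H_{\mh}(x,y)=c_n/|x-y^*|^{n-2}$, which is exactly the leading term we want. The key observation is that $\Omega$ and $\mh$ agree near $y$ up to a quadratic error, so the harmonic functions $H(\cdot,y)$ and $H_{\mh}(\cdot,y)$ should differ by something of relative size $\mathbf{d}(y)$.

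The steps I would carry out are as follows. First I would record that $w(x):=H(x,y)-H_{\mh}(x,y)$ is harmonic in $\Omega$ (both are harmonic in $x$), so by the maximum principle $\|w\|_{L^\infty(\Omega)}$ is controlled by $\|w\|_{L^\infty(\partial\Omega)}$; more precisely I need the pointwise bound with the weight $|x-y^*|^{-(n-2)}$, so I would work with $H_{\mh}(x,y)^{-1}w(x)$ or split $\partial\Omega$ into the part near $0$ (radius $\sim\mathbf{d}(y)^{1/2}$ or a fixed small radius) and the far part. On $\partial\Omega$, $x=(\bar x,\varphi(\bar x))$, one has $G(x,y)=0$ hence $H(x,y)=c_n/|x-y|^{n-2}$, so $w(x)=c_n|x-y|^{-(n-2)}-c_n|x-y^*|^{-(n-2)}$; using $|x-y|^2-|x-y^*|^2 = -4\mathbf{d}(y)\varphi(\bar x)/\,\cdot\,$ type identities (the $z_n$-components of $y$ and $y^*$ are $\pm\mathbf{d}(y)$, and $x_n=\varphi(\bar x)=O(|\bar x|^2)$), one finds $\big||x-y|^2-|x-y^*|^2\big|\le C\mathbf{d}(y)|\bar x|^2\le C\mathbf{d}(y)(|x-y|^2+\mathbf{d}(y)^2)$, whence, after a short computation with the mean value theorem applied to $t\mapsto t^{-(n-2)/2}$,
\[
|w(x)|\le C\,\frac{\mathbf{d}(y)}{|x-y^*|^{n-2}}\qquad\text{for }x\in\partial\Omega.
\]
Since the comparison function $\mathbf{d}(y)\,c_n|x-y^*|^{-(n-2)}$ is superharmonic in $x$ on $\Omega$ (it is a positive multiple of a translate of the fundamental solution with pole $y^*\notin\overline\Omega$), the maximum principle upgrades this boundary bound to all of $\Omega$, giving \eqref{a-eq-h1-4}. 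For \eqref{a-eq-h1-5} I would differentiate: $\nabla_x w$ is again harmonic, and one may either apply interior gradient estimates for harmonic functions on the ball $B(x,\mathbf{d}(x)/2)\subset\Omega$ together with \eqref{a-eq-h1-4} (which loses a factor $\mathbf{d}(x)^{-1}$ and produces exactly the weight $\mathbf{d}(y)/(\mathbf{d}(x)|x-y^*|^{n-2})$), or repeat the boundary-comparison argument for $\nabla_x w$ directly. The interior-estimate route is cleaner: on $B(x,\mathbf{d}(x)/2)$ we have $|x-y^*|\sim|x'-y^*|$ for all $x'$ in the ball, so $\|w\|_{L^\infty(B(x,\mathbf{d}(x)/2))}\le C\mathbf{d}(y)|x-y^*|^{-(n-2)}$, and the standard estimate $|\nabla w(x)|\le C\mathbf{d}(x)^{-1}\|w\|_{L^\infty(B(x,\mathbf{d}(x)/2))}$ yields \eqref{a-eq-h1-5}.

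The main obstacle I expect is the bookkeeping in the boundary estimate of $w$ when $x$ is far from $y$ — there the flat-approximation of $\partial\Omega$ by the tangent half-space at $0$ is no longer accurate, and one must instead argue that for $|x-y^*|$ bounded below $H(x,y)$ and $c_n|x-y^*|^{-(n-2)}$ are both smooth bounded quantities whose difference is $O(\mathbf{d}(y))$ because the whole configuration degenerates continuously as $\mathbf{d}(y)\to 0$; making this uniform over $y$ near $\partial\Omega$ (with constants depending only on $\Omega$) requires a compactness/covering argument over $\partial\Omega$ and a careful choice of the radius at which one passes from the ``near $0$'' to the ``far'' regime. A secondary technical point is justifying that the quadratic bound $|\varphi(\bar x)|\le C|\bar x|^2$ is enough even when $|\bar x|$ is of order one (it is, once $C$ is taken uniform over a tubular neighborhood and the far regime is handled separately as above). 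Once these uniformities are pinned down, both displays follow from the maximum principle and interior gradient estimates as described.
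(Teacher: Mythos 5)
Your proposal is correct and genuinely streamlines the argument compared to what the paper does. Both proofs start from the same setup: write $D(x,y)=H(x,y)-c_n|x-y^*|^{-(n-2)}$, observe that $D(\cdot,y)$ is harmonic with boundary data $c_n|x-y|^{-(n-2)}-c_n|x-y^*|^{-(n-2)}$, and estimate those boundary values using the algebraic identity $|w-y|^2-|w-y^*|^2=-4w_n\,\mathbf{d}(y)$ together with the graph parametrization of $\partial\Omega$ (the quadratic bound $|\varphi(\bar z)|\lesssim|\bar z|^2$ controls the near-region contribution, and the far region is trivial since $||w-y|-|w-y^*||\le 2\mathbf{d}(y)$). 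Where you diverge from the paper is in passing from the boundary bound to the interior. The paper represents $D(x,y)$ via the Poisson kernel, uses $|K(x,w)|\le C\mathbf{d}(x)|x-w|^{-n}$, and then laboriously splits the boundary integral into three regions according to the relative sizes of $|x-w|$, $|w-y^*|$, and $|x-y^*|$. You instead observe that $x\mapsto c_n\mathbf{d}(y)|x-y^*|^{-(n-2)}$ is itself a nonnegative harmonic function on $\Omega$ (pole at $y^*\notin\overline\Omega$), so the boundary bound propagates to all of $\Omega$ directly by the maximum principle, avoiding any interior integral estimates. This is a neater mechanism and proves the same statement. (Small inaccuracy: your comparison function is harmonic, not merely superharmonic, but this only strengthens the argument.) Your treatment of \eqref{eq-h1-5} via interior gradient estimates for the harmonic function $D(\cdot,y)$ on $B(x,\mathbf{d}(x)/2)$ is exactly what the paper does as well. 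The remaining gaps you flag --- uniformity of constants over a tubular neighborhood of $\partial\Omega$ and the far-region bookkeeping --- are real but routine, and the paper itself glosses over the same uniformity issues; the far-region estimate in particular is immediate from $||w-y|-|w-y^*||\le|y-y^*|=2\mathbf{d}(y)$, so no separate machinery is needed there.
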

\begin{proof}
Fix a point $y \in \Omega$. For simplicity we assume that $0 \in \partial \Omega$, $\mathbf{d}(y) = \textrm{dist}(y,0)$, and $y = (0,\cdots, 0, \mathbf{d}(y))$. Then, for a small value $r>0$ the boundary $\partial \Omega \cap B(0,r)$ is parametrized as $w= (z_1, \cdots, z_{n-1}, f(z))$ with a function $f : \mathbb{R}^{n-1} \rightarrow \overline{\mathbb{R}^{+}}$ such that $f(0)=0$ and $\nabla f(0) =0$. We note that  $y^* = (0,\cdots, 0, -\mathbf{d}(y))$. It will be clear in the proof that this setting does not lose any generality.

Now we set $D: \Omega \times \Omega \rightarrow \mathbb{R}$ by
\begin{equation*}
D(x,y) = H(x,y) - \frac{c_n}{|x-y^*|^{n-2}},
\end{equation*}
and it then suffices to estimate this function for our purpose. The function $D(x,y)$ satisfies 
\begin{equation*}
\left\{ \begin{array}{ll}
-\Delta_x D(x,y) = 0&\quad x \in \Omega,
\\
D(x,y) = \frac{c_n}{|x-y|^{n-2}} - \frac{c_n}{|x-y^*|^{n-2}}&\quad x \in \partial \Omega.
\end{array}
\right.
\end{equation*}
Denote by $K(x,y): \Omega \times \Omega \rightarrow \mathbb{R}$ the Neumann kernel of $\Omega$ such that we have 
\[g(x) = \int_{\partial \Omega} K(x,y) f(y) dS_y\]
whenever $(-\Delta) g (x) =0$ in $\Omega$ and $g(x) = f(x)$ on $\partial \Omega$ under suitable regularity assumptions on $f$ and $g$.
 Then it is well-known that $|K(x,w)| \leq \frac {C \mathbf{d}(x)}{|x-w|^{n}}$ for some constant $C= C(\Omega) >0$. Using this we can estimate $D(x,y)$ as 
\begin{equation}\label{eq-h-5}
\begin{split}
\left|D(x,y)\right| &= \left|\int_{\partial \Omega} K(x,w) \left[ \frac{c_n}{|w-y|^{n-2}} - \frac{c_n}{|w-y^*|^{n-2}}\right] dS_w\right|
\\
&\leq C \int_{\partial \Omega} \frac{\mathbf{d}(x)}{|x-w|^{n}} \left| 
\frac{1}{|w-y|^{n-2}} - \frac{1}{|w-y^*|^{n-2}}\right| dS_w
\\
&= C \int_{\partial \Omega \cap B^n(0,r)} \frac{\mathbf{d}(x)}{|x-w|^{n}} \left| 
\frac{1}{|w-y|^{n-2}} - \frac{1}{|w-y^*|^{n-2}}\right|  dS_{w} 
\\
&\quad\quad+ C \int_{\partial \Omega \cap B^n(0,r)^{c}}\frac{\mathbf{d}(x)}{|x-w|^{n}} \left| 
\frac{1}{|w-y|^{n-2}} - \frac{1}{|w-y^*|^{n-2}}\right|  dS_{w}
\\
&=: I_1 + I_2.
\end{split}
\end{equation}
It is easy to see that for a universal constant $C = C(\Omega)>0$ we have
\begin{equation}\label{eq-h-6}
\frac{1}{C} |w-y| \leq |w-y^{*}|\leq C|w-y|\quad \forall w \in \partial \Omega.
\end{equation}
We then can  deduce that
\begin{equation}\label{eq-h-2}
\begin{split}
\left| \frac{1}{|w-y|^{n-2}} - \frac{1}{|w-y^*|^{n-2}}\right| &= \frac{||w-y|^{n-2}- |w-y^*|^{n-2}|}{|w-y|^{n-2}|w-y^*|^{n-2}}
\\
& \leq \frac{C|w-y|^{n-3} ||w-y|-|w-y^*||}{|w-y|^{n-2}|w-y^*|^{n-2}}.
\end{split}
\end{equation}
For $w \in \partial \Omega$ we may write $w-y = (w_1, \cdots, w_{n-1}, w_n -\mathbf{d}(y))$ and $w-y^* = (w_1, \cdots, w_{n-1}, w_n + \mathbf{d}(y))$. Hence 
\begin{equation}\label{eq-h-4}
|w-y|-|w-y^*|= \frac{|w-y|^2 - |w-y^*|^2}{|w-y|+ |w-y^*|} = \frac{ -4w_n \mathbf{d}(y)}{|w-y|+|w-y^*|}.
\end{equation}
First we shall estimate $I_2$. Note that $|w-y| \geq \frac{r}{2}$ for all $w \in \partial \Omega \cap B(0,r)^{c}$. Hence we may deduce from \eqref{eq-h-4} that
\begin{equation*}
|w-y|-|w-y^*| \leq C \mathbf{d}(y),
\end{equation*}
and also, from \eqref{eq-h-2}, 
\begin{equation*}
\left| \frac{1}{|w-y|^{n-2}} - \frac{1}{|w-y^*|^{n-2}}\right|  \leq C \mathbf{d}(y).
\end{equation*}
Using this we get
\begin{equation}\label{eq-h-10}
\begin{split}
I_2 &=C\int_{\partial \Omega \cap B^n (0,r)^{c}}\frac{\mathbf{d}(x)}{|x-w|^{n}} \left| 
\frac{1}{|w-y|^{n-2}} - \frac{1}{|w-y^*|^{n-2}}\right|  dS_{w}
\\
& =C \int_{\partial \Omega \cap B^n (0,r)^{c}} \frac{\mathbf{d}(x)}{|x-w|^{n}}  O(\mathbf{d}(y)) dS_{w} = O(\mathbf{d}(y)).
\end{split}
\end{equation}
Next we shall estimate $I_1$. Remind that $\partial \Omega \cap B^n (0,r)$ is parametrized by the map $w: B^{n-1} (0,r)\rightarrow \partial \Omega \cap B^n (0,r)$ defined by
\begin{equation*}
w(z) = (z_1,\cdots, z_{n-1}, f(z)) \quad \textrm{for}\quad z=(z_1, \cdots, z_{n-1}) \in \mathbb{R}^{n-1}.
\end{equation*}
Using this we write
\begin{equation}\label{eq-h-1}
w(z)- y = (z_1, \cdots, z_{n-1}, f(z) -\mathbf{d}(y))\quad\textrm{and}\quad w(z) -y^* = (z_1, \cdots, z_{n-1}, f(z) + \mathbf{d}(y)).
\end{equation}
This enables us to get the estimate
\begin{equation}\label{eq-h-3}
\begin{split}
|w-y|-|w-y^*|&= \frac{|w-y|^2 - |w-y^*|^2}{|w-y|+ |w-y^*|} = \frac{ 2 f(z) \mathbf{d}(y)}{|w-y|+|w-y^*|}
\\
&\leq \frac{f(z) \mathbf{d}(y)}{|w-y|}.
\end{split}
\end{equation}
Using the estimates \eqref{eq-h-6}, \eqref{eq-h-2} and \eqref{eq-h-3}, we find 
\begin{equation*}
\begin{split}
\left| \frac{c_n}{|w-y|^{n-2}} - \frac{c_n}{|w-y^*|^{n-2}}\right|  \leq \frac{C f(z) \mathbf{d}(y)}{|w-y|^{n}}
\end{split}
\end{equation*}
Thus, 
\begin{equation*}
\begin{split}
I_1 = \int_{\partial \Omega \cap B^n (0,r)}\frac{\mathbf{d}(x)}{|x-w|^{n}} \left| 
\frac{c_n}{|w-y|^{n-2}} - \frac{c_n}{|w-y^*|^{n-2}}\right|   dS_{w}  &\leq C \int_{B^{n-1} (0,r)} \frac{\mathbf{d}(x)}{|x-w(z)|^{n}} \frac{f(z) \mathbf{d}(y)}{|w(z) -y^*|^{n}} dz.
\end{split}
\end{equation*}
Since $f(0)=0$ and $\nabla f(0)=0$, we have $f(z) = O(|z|^2)$, and we note that 
\begin{equation*}
|z| \leq |w(z) -y^*|\qquad \forall z \in B^{n-1} (0,r).
\end{equation*}
Using this we find
\begin{equation*}
\begin{split}
I_1 &\leq  C\int_{B^{n-1} (0,r)} \frac{\mathbf{d}(x)}{|x-w(z)|^{n}} \frac{|z|^2 \mathbf{d}(y)}{|w(z) -y^*|^{n}} dz
\\
&\leq C \int_{B(0,r)} \frac{\mathbf{d}(x)}{|x-w(z)|^{n}} \frac{\mathbf{d}(y)}{|w(z) -y^*|^{n-2}} dz
\end{split}
\end{equation*}
We have to bound this by $O \left(\frac{\mathbf{d}(y)}{|x-y^*|^{n-2}}\right)$. For this aim, we split the region $B^{n-1}(0,r)$ as 
\begin{equation*}
B^{n-1} (0,r) = A_1 + A_2 + A_3,
\end{equation*}
where
\begin{equation*}
\begin{aligned}
A_1 &= \left\{ z\in B^{n-1} (0,r): |x-w(z)| \leq \frac{|x-y^*|}{2}\right\}, 
\\
A_2 &= \left\{z\in B^{n-1} (0,r): |w(z)- y^*| \leq \frac{|x-y^*|}{2}\right\},
\\
A_2&= \left\{ z\in  B^{n-1} (0,r):|x-w(z)| \geq \frac{|x-y^*|}{2}, |w(z)-y^*| \geq \frac{|x-y^*|}{2}\right\}.
\end{aligned}
\end{equation*}
For any $z \in B^{n-1} (0,r)$, we have the triangle inequality 
\begin{equation}\label{eq-h-7}
|x-w(z)| + |y^* - w(z)| \geq |x-y^*|.
\end{equation}
For $z \in A_1$ we see from \eqref{eq-h-7} that $|y^* - w(z) | \geq \frac{1}{2}|x-y^*|$. Thus,
\begin{equation*}
\int_{A_1}\frac{\mathbf{d}(x)}{|x-w(z)|^{n}} \frac{\mathbf{d}(y)}{|w(z) -y^*|^{n-2}} dS_z \leq \frac{\mathbf{d}(y)}{|x-y^*|^{n-2}} \int_{A_1} \frac{\mathbf{d}(x)}{|x-w(z)|^{n}} dz \leq \frac{C\mathbf{d}(y)}{|x-y^*|^{n-2}}.
\end{equation*}
For $z \in A_2$ we see from \eqref{eq-h-7} that $|x-w(z)| \geq \frac{1}{2} |x-y^*|$. Using this we find
\begin{equation*}
\begin{split}
\int_{A_2} \frac{\mathbf{d}(x)}{|x-w(z)|^{n}} \frac{\mathbf{d}(y)}{|w(z) -y^*|^{n-2}} dS_z &\leq \frac{C \mathbf{d}(z) \mathbf{d}(y)}{|x-y^*|^{n}} \int_{A_2} \frac{1}{|w(z)-y^*|^{n-2}} dz 
\\
&\leq \frac{C \mathbf{d}(x) \mathbf{d}(y)}{|x-y^*|^{n-1}}.
\end{split}
\end{equation*}

For $z \in A_3$ we have
\begin{equation*}
|y^* - w(z)| = \frac{|y^* - w(z)|}{2}  + \frac{|y^* - w(z)|}{2} \geq \frac{|x-y^*| }{4} + \frac{|y^* - w(z)|}{2}  \geq \frac{|x-w(z)|}{4}. 
\end{equation*}
Using this we get
\begin{equation*}
\begin{split}
\int_{A_3} \frac{\mathbf{d}(x)}{|x-w(z)|^{n}} \frac{\mathbf{d}(y)}{|w(z) -y^*|^{n-2}} dS_z& \leq C \mathbf{d}(x) \mathbf{d}(y) \int_{|x-w(z)|\geq \frac{|x-y^*|}{2}} \frac{1}{|x-w(z)|^{2n-2}} dz
\\
&\leq  C\frac{\mathbf{d}(x) \mathbf{d}(y)}{|x-y^*|^{n-1}}.
\end{split}
\end{equation*}
The above estimates along with that $\mathbf{d}(x) \leq |x-y^*|$ yields that 
\begin{equation*}
I_1 \leq  \frac{C\mathbf{d}(y)}{|x-y^*|^{n-2}}.
\end{equation*}
Combining this and \eqref{eq-h-10} we get the desired estimate \eqref{a-eq-h1-4}. To get the second estimate \eqref{a-eq-h1-5}, we note that the function $a \rightarrow H(a,y) - \frac{c_n}{|a-y^*|^{n-2}}$ is a harmonic on $B(x, \mathbf{d}(x))$. Hence we may use the standard regularity theory and deduce from \eqref{eq-h1-4} that
\begin{equation*}
\frac{\partial H}{\partial x_j}(x,y) + c_n (n-2) \frac{(x-y^*)_j}{|x-y^*|^{n}}= \frac{1}{\mathbf{d}(x)}\cdot O \left( \frac{\mathbf{d}(y)}{|x-y^*|^{n-2}}\right).
\end{equation*}
The lemma is proved.
\end{proof}

\section*{Acknowledgements}\thispagestyle{empty}
The author is grateful to the financial support
from POSCO TJ Park Foundation. 

\end{document}